\newcommand{\hide}[1]{}
\newcommand{\tilaa}{} %{\vspace{30 ex}}
\newcommand{\kom}[1]{}
\newcommand{\komt}[1]{} %{\tilaa}
\newcommand{\idea}{\noindent {\textsc {Idea: }}}
 \def\1{\raisebox{2pt}{\rm{$\chi$}}}
\def\a{{\bf a}}
\newtheorem{theorem}{Theorem}[section]
\newtheorem{lemma}[theorem]{Lemma}
\newtheorem{proposition}[theorem]{Proposition}
\theoremstyle{definition}
\newtheorem{definition}[theorem]{Definition}
\newtheorem{example}[theorem]{Example}
\theoremstyle{remark}
\newtheorem{remark}[theorem]{Remark}
\newcommand{\R}{{\mathbb R}}
\newcommand{\RR}{{\mathbb R}}
\newcommand{\N}{{\mathbb N}}
\newcommand{\E}{{\mathbb E\,}}
 \newcommand{\eps}{{\varepsilon}}
 \def\1{\raisebox{2pt}{\rm{$\chi$}}}
\newcommand{\abs}[1]{\left|#1\right|}
\newcommand{\Rn}{\mathbb{R}^n}
\def\vint_#1{\mathchoice%
          {\mathop{\kern 0.2em\vrule width 0.6em height 0.69678ex depth -0.58065ex
                  \kern -0.8em \intop}\nolimits_{\kern -0.4em#1}}%
          {\mathop{\kern 0.1em\vrule width 0.5em height 0.69678ex depth -0.60387ex
                  \kern -0.6em \intop}\nolimits_{#1}}%
          {\mathop{\kern 0.1em\vrule width 0.5em height 0.69678ex
              depth -0.60387ex
                  \kern -0.6em \intop}\nolimits_{#1}}%
          {\mathop{\kern 0.1em\vrule width 0.5em height 0.69678ex depth -0.60387ex
                  \kern -0.6em \intop}\nolimits_{#1}}}
\def\vintslides_#1{\mathchoice%
          {\mathop{\kern 0.1em\vrule width 0.5em height 0.697ex depth -0.581ex
                  \kern -0.6em \intop}\nolimits_{\kern -0.4em#1}}%
          {\mathop{\kern 0.1em\vrule width 0.3em height 0.697ex depth -0.604ex
                  \kern -0.4em \intop}\nolimits_{#1}}%
          {\mathop{\kern 0.1em\vrule width 0.3em height 0.697ex depth -0.604ex
                  \kern -0.4em \intop}\nolimits_{#1}}%
          {\mathop{\kern 0.1em\vrule width 0.3em height 0.697ex depth -0.604ex
                  \kern -0.4em \intop}\nolimits_{#1}}}
\newcommand{\kint}{\vint}
\newcommand{\intav}{\vint}
\newcommand{\aveint}[2]{\mathchoice%
          {\mathop{\kern 0.2em\vrule width 0.6em height 0.69678ex depth -0.58065ex
                  \kern -0.8em \intop}\nolimits_{\kern -0.45em#1}^{#2}}%
          {\mathop{\kern 0.1em\vrule width 0.5em height 0.69678ex depth -0.60387ex
                  \kern -0.6em \intop}\nolimits_{#1}^{#2}}%
          {\mathop{\kern 0.1em\vrule width 0.5em height 0.69678ex depth -0.60387ex
                  \kern -0.6em \intop}\nolimits_{#1}^{#2}}%
          {\mathop{\kern 0.1em\vrule width 0.5em height 0.69678ex depth -0.60387ex
                  \kern -0.6em \intop}\nolimits_{#1}^{#2}}}
\newcommand{\ud}{\, d}
\newcommand{\half}{{\frac{1}{2}}}
\newcommand{\ol}{\overline}
\newcommand{\Om}{\Omega}
\newcommand{\I}{\textrm{I}}
\newcommand{\II}{\textrm{II}}
\newcommand{\dist}{\operatorname{dist}}
\renewcommand{\P}{\mathbb{P\,}}
\newcommand{\om}{\omega}
\newcommand{\trm}{\textrm}
\newcommand{\vp}{\varphi}
\newcommand{\divt}{\operatorname{div}}
\newcommand{\tr}{\operatorname{tr}}
\renewcommand{\a}{\alpha}
\begin{document}

\title[]{Notes on tug-of-war games and the $p$-Laplace equation}

\author[]{Mikko Parviainen}
\address{Department of Mathematics and Statistics, University of
Jyv\"askyl\"a, PO~Box~35, FI-40014 Jyv\"askyl\"a, Finland}
\email{mikko.j.parviainen@jyu.fi}

%\author[]{}
%\address{}
%\email{}

\date{\today}
\keywords{} \subjclass[2010]{}

\maketitle

\noindent{\bf Preface:} These notes were written up after my lectures at the Beijing Normal University  in January 2022. 
I am grateful to Yuan Zhou for organizing such an interesting intensive period. I would like to thank \'Angel Arroyo and Jeongmin Han for reading the original manuscript and contributing valuable comments; as well as Janne Taipalus for some of the illustrations.

The objective is the interplay between stochastic processes and partial differential equations. To be more precise, we focus on the connection between the nonlinear $p$-Laplace equation, and the stochastic game called tug-of-war with noise. The connection in this context was discovered roughly 15 years ago, and has provided novel insight and  approaches ever since. These  lecture notes provide a short introduction to the topic and to more research oriented literature, as well as to the recent monographs of Lewicka, \emph{A Course on Tug-of-War Games with Random Noise}, and Blanc-Rossi, \emph{Game Theory and Partial Differential Equations}. We also introduce the parabolic case side by side with the elliptic one, and cover some parts of the regularity theory. 

\pagebreak

\tableofcontents

\pagebreak

\section{Introduction}

%%%%%%%%%%

The fundamental works of Doob, Hunt, Kakutani, Kolmogorov, L\'evy and many others have shown a profound and powerful connection between the classical linear potential theory and the corresponding probability theory. The idea behind the classical interplay is that harmonic functions and martingales share a common origin in mean value properties, i.e.\ roughly:
\begin{align*}
\text{\mbox{$u$ is harmonic $\sim\ u(x)=\frac{1}{\abs{B_\eps}}\int_{B_\eps(x)} u(y) \ud y  \sim$ random walk,}
}
\end{align*}
where $\sim$ can be read as 'is related to'.
The connection between harmonic functions, the mean value property and the expected value of a random walk is further illustrated in the examples below.
Such a connection appears to require linearity, but the approach turns out to be useful in the nonlinear theory as well.

These notes use basic probability: the reader can consult for example \cite{varadhan01} or \cite{evans13}.

\subsection{First examples}

\begin{example}[Random walk]
\label{ex:1d-random}
Consider a one dimensional discrete random walk, where we toss a coin to decide whether we move left or right. In other words,
when at a point on the $\eps$-grid
\begin{align*}
\{0,\eps,2\eps,\ldots ,1\}\setminus \{0,1\},
\end{align*}
 a token is moved 
\begin{align*}
&\text{one step left with probability } \half, \\
&\text{one step right with probability }\half.
\end{align*}
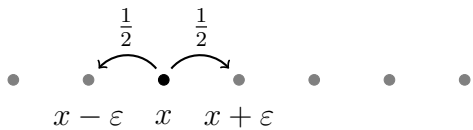
\begin{figure}[h]
 \begin{tikzpicture}
  \filldraw[gray] (-2,0) circle (2pt);
  \filldraw[gray] (-1,0) circle (2pt);
  \filldraw[black] (0,0) circle (2pt);
  \filldraw[gray] (1,0) circle (2pt);
  \filldraw[gray] (2,0) circle (2pt);
  \filldraw[gray] (3,0) circle (2pt);
  \filldraw[gray] (4,0) circle (2pt);
  \draw[thick ,->] (-0.1,0.15) arc (40:140:0.5);
  \draw[thick ,->] (0.1,0.15) arc (140:40:0.5);
  \node at (-0.5,0.7) {$\frac{1}{2}$};
  \node at (0.5,0.7) {$\frac{1}{2}$};
  \node at (-1,-0.5) {$x-\eps$};
    \node at (0,-0.5) {$x$};
  \node at (1,-0.5) {$x+\eps$};
 \end{tikzpicture}
    \caption{Discrete random walk.}
\end{figure}

Throughout the introduction, the step size is of the form
$$\eps=2^{-k} \quad \text{for some}\quad k\in \mathbb N.$$  
\tilaa
When we reach the boundary at 0 or 1, we stop and get a payoff
\begin{align*}
F(0)=0 \text{ or }F(1)=1,
\end{align*}
based on whichever point 0 or 1 we ended up with. 

What is the expected payoff when starting at $x_0$? Let us denote 
\[
\begin{split}
u(x_0):=\mathbb E^{x_0}[F(x_{\tau})],
\end{split}
\]  
where the expectation is taken over all the possible paths (i.e.\ sequence of points visited), $x_{\tau}$ denotes whichever point 0 or 1 we ended up with on the boundary, and $\tau$ is a stopping time denoting the number of the round when we hit the boundary. 
Looking at one step, a dynamic programming principle (DPP) holds for $x\in \{\eps,2\eps,\ldots ,1-\eps\}$:
\[
\begin{split}
\begin{cases}
u(x)=\half(u(x-\eps)+u(x+\eps)) & \\
u(0)=0,\ u(1)=1. & 
\end{cases}
\end{split}
\]
The solution is
\[
\begin{split}
u(i\eps)=i\eps,\qquad  i=0,1,\ldots, \eps^{-1}.
\end{split}
\]
\idea The DPP says that the expectation can be computed by summing up the expectations in the neighboring points with the corresponding step probabilities.

\noindent {\bf Hitting probabilities:} 
In this example, the value $u(x_0)$ describes the probability of hitting $1$ first.
We state without a proof that the process reaches $0$ or $1$ almost surely, i.e.\ with probability 1. It holds that
\begin{align}
\label{eq:hitting-prob}
u(x_0)&=\mathbb E^{x_0}[F(x_{\tau})]=\P^{x_0}(x_{\tau}=1)\cdot 1+\P^{x_0}(x_{\tau}=0)\cdot 0\nonumber \\
&=\P^{x_0}(x_{\tau}=1).
\end{align}
\end{example}

\begin{example}[Running time]
\sloppy
\label{ex:surplus-of-two}
Tossing again a coin, we consider a two players game. We keep track of how many heads and tails occur. Player I wins if she gets a surplus of two in heads (= two more heads) first, and Player II if he gets surplus of two in tails.
 Denote the grid 
 $$
 -2,-1,0,1,2,
$$
where the position denotes the surplus ($-$ for deficit) of heads. The process may be modeled as a random walk, where we jump right with probability $\half$ (got heads) and left with probability $\half$ (got tails).

 What is the remaining expected number of tosses in the game when we start observing in a given point $x$ of the grid?
Denote
 \begin{align*}
u(x)=\mathbb E^{x}['\text{number of tosses}']=\mathbb E^{x}[\tau],
\end{align*}  
where $\tau$ is a stopping time denoting the number of the round when we reach $-2$ or $2$.
The following DPP holds  for $x\in \{-1,0,1\}$ 
\[
\begin{split}
\begin{cases}
 u(x)=\half(u(x-1)+u(x+1))+1 & \\
  u(\pm 2)=0. & 
\end{cases}
\end{split}
\]  
The solution is $u(x)=-x^2+4$ for $x\in \{-2,-1,0,1,2\}$.

\idea The expected number of rounds can be computed by summing up the expected number of rounds in the neighboring points with the corresponding step probabilities plus one step to get there.

In the same way, we may form the DPP for the  expected number of steps in the random walk on the $\eps$-grid of Example \ref{ex:1d-random}:
\begin{align}
\label{eq:dpp-e-steps}
\begin{cases}
 u(x)=\half(u(x-\eps)+u(x+\eps))+1 & \\
  u(0)=0,\quad u(1)=0. & 
\end{cases}
\end{align}
\end{example}

%%%%%%%%%%%%%%%%%%%%%%%%%%%%%%%%%%%%%%%%%%%%%%%
\begin{example}[Related PDEs]
\label{ex:walk-laplace}
Next we modify the DPP
\[
\begin{split}
u(x)=\half(u(x-\eps)+u(x+\eps))
\end{split}
\]
in  Example \ref{ex:1d-random}
and 
get
\begin{align*}
\frac{1}{2\eps^2} (u(x-\eps)-2u(x)+u(x+\eps))=0.
\end{align*}
The left hand side is a well-known discretization of $\half u''$ and thus the random walk seems to be related to the problem
\[
\begin{split}
\begin{cases}
u''=0 & \text{ in }(0,1)  \\
 u(0)=0,\ u(1)=1.& 
\end{cases}
\end{split}
\]

In Example~\ref{ex:surplus-of-two} we calculated the expected number of steps and in the case of the $\eps$-grid ended up with (\ref{eq:dpp-e-steps}).
Now let us instead compute the running time for the walk on the $\eps$-grid in the case when one step takes $ \eps^2$ seconds. Then the DPP takes the form 
\[
\begin{split}
\begin{cases}
u(x)=\half(u(x-\eps)+u(x+\eps))+\eps^2 & \\
u(0)=0,\ u(1)=0. & 
\end{cases}
\end{split}
\]
Rearranging the DPP $u(x)=\half(u(x-\eps)+u(x+\eps))+\eps^2$, we get
\[
\begin{split}
 \frac{1}{\eps^2} (u(x-\eps)-2u(x)+u(x+\eps))=-2.
\end{split}
\]
Thus this seems to be related to the problem
\[
\begin{split}
\begin{cases}
u''=-2 & \text{ in }(0,1)  \\
 u(0)=0,\ u(1)=0.& 
\end{cases}
\end{split}
\]
If $u(x)=\half(u(x-\eps)+u(x+\eps))+ \eps^2$, then 
$$
v(x):=\eps^{-2}u(x)
$$ 
solves 
$$
v(x)=\half(v(x-\eps)+v(x+\eps))+1.
$$ 
This also suggests that the number of steps in the above $\eps$-step random walk has a bound of the form $C/\eps^2$. We will need this observation later.

\noindent {\sc Another point of view:} we may generalize the approach beyond running times. Instead, every step gives $+\eps^2 f(x)$ running payoff/cost and $F$ is the final payoff/cost: 
\[
\begin{split}
\begin{cases}
u(x)=\half(u(x-\eps)+u(x+\eps))+\eps^2 f(x) & \\
u(0)=F(0),\ u(1)=F(1). & 
\end{cases}
\end{split}
\]
This is related to the problem
\begin{align*}
 \begin{cases}
\half u''(x)=-f(x), & \\
u(0)=F(0),&\\
 u(1)=F(1). & 
\end{cases}
\end{align*}
\end{example}

%%%%%%%%%%
\begin{example}[Random walk in 2D]
\label{ex:2d-discrete}
Let $\Om$ be a two dimensional regular grid $\{\eps,2\eps,\ldots ,1-\eps\}\times \{\eps,2\eps,\ldots ,1-\eps\}$ with the boundary $\partial \Om$ defined as $(\{0,\eps,2\eps,\ldots ,1\}\times \{0,\eps,2\eps,\ldots ,1\})\setminus \Om$. When in the domain, we step to one of the four neighbors (up/down/left/right) each with probability $\frac14$ until we hit the boundary, and at the boundary we are  given a payoff  function $F$. Again, denote 
\begin{align*}
u(x_0,y_0):&=\mathbb E^{x_0,y_0}[F(x_{\tau},y_{\tau})]
\end{align*}
where $(x_0,y_0)$ denotes the starting point, and $(x_{\tau},y_{\tau})$ the hitting point at the boundary on the round $\tau$ similarly as before.
Then we have the DPP
\[
\begin{split}
u(x,y)=\frac14(u(x,y+\eps)+u(x,y-\eps)+u(x+\eps,y)+u(x-\eps,y)) 
\end{split}
\]
that can be written as
\[
\begin{split}
0=\frac{1}{4\eps^2} (u(x,y+\eps)+u(x,y-\eps)+u(x+\eps,y)+u(x-\eps,y)-4u(x,y)). 
\end{split}
\]
The right hand side is a two dimensional discretization of the Laplacian apart from the factor 1/4 i.e.\
\begin{align*}
\Delta u&=u_{xx}+u_{yy}\\
&\approx \frac{1}{\eps^2}(u(x,y+\eps)+u(x,y-\eps)+u(x+\eps,y)+u(x-\eps,y)-4u(x,y)).
\end{align*}
Thus the two dimensional random walk seems to be related to the problem
\[
\begin{split}
\begin{cases}
\Delta u=0 & \text{ in }\Om \\
 u=F&\text{ on } \partial \Om. 
\end{cases}
\end{split}
\]
This approach generalizes to $\R^n$ as well.

There is also a continuous version of the random walk that we do not consider in detail: the Brownian motion.  
\end{example}

%%%%%%%%%%
\begin{example}[Time dependent expectation for random walk]
\label{ex:1d-random-time}
Similarly as in Example \ref{ex:1d-random}, we consider the random walk, where we jump left or right each with probability $\half$ on the grid 
\begin{align*}
\{0,\eps,2\eps,\ldots ,1\}\setminus \{0,1\}.
\end{align*}
We also track time: suppose that we are given $$t_0=j \eps^2$$ seconds for some $j\in \N$ at the beginning, and each step reduces $-\eps^2$ seconds of the remaining time. We stop if the token hits the boundary 0 or 1, or at latest when the time runs out (when the remaining time is $0$). The final payoff is given by  $F(x,0)$ and the boundary payoff by $F(1,t)$ or $F(0,t)$. 

What is the expected payoff when starting at $x_0$ with remaining time $t_0$? Let us denote 
\[
\begin{split}
u(x_0, t_0):=\mathbb E^{x_0,t_0}[F(x_{\tau},t_{\tau})],
\end{split}
\]  
where $\tau$ denotes the number of the round when we stop, and $t_{\tau}=t_0-\tau \eps^2$ is the corresponding time.
The expectation satisfies the DPP
\[
\begin{split}
u(x,t)=\half(u(x-\eps,t-\eps^2)+u(x+\eps,t-\eps^2)).
\end{split}
\]
\idea The DPP says that the expectation can be computed by summing up the expectations in the neighboring points with the corresponding step probabilities, but at the time we have left after the step.

We modify the DPP to a form
\begin{align*}
&\frac{u(x,t)-u(x,t-\eps^2)}{\eps^2}\\
&=\frac{1}{2\eps^2}(u(x-\eps,t-\eps^2)-2u(x,t-\eps^2)+u(x+\eps,t-\eps^2)).
\end{align*}
Thus the time dependent random walk seems to be related to the heat equation type problem
\begin{align*}
u_t=\half u_{xx},
\end{align*}
where $u_t$ denotes the partial derivative with respect to $t$ and $u_{xx}$ the second partial derivative with respect to $x$. 
\end{example}

\subsection{Stochastic games and the $p$-Laplacian}
Let $u:\Om\to \R$ with an open bounded domain $\Om \subset \Rn$, and
$$u_{x_1x_1}, u_{x_2 x_2}, \ldots u_{x_n x_n}$$ denote the second partial derivatives to the coordinate directions. The Laplace equation
$$
\Delta u:= u_{x_1x_1}+u_{x_2 x_2}+\ldots +u_{x_n x_n}
$$ 
 is the Euler-Lagrange equation to the Dirichlet integral
\begin{align*}
\int_{\Om} \abs{Du}^2 \ud x.
\end{align*}
Here $Du$ denotes the gradient  of $u$
\[
\begin{split}
Du&
=\begin{pmatrix}
u_{x_1}\\
\vdots\\
u_{x_n}\\ 
\end{pmatrix}
\end{split}
\]
and $\abs{Du}^2=u^2_{x_1}+u_{x_2}^2+\cdots+u_{x_n}^2$.
 A natural nonlinear generalization is the $p$-Dirichlet integral
\begin{align*}
\int_{\Om} \abs{Du}^p \ud x.
\end{align*}
Its Euler-Lagrange equation is the $p$-Laplace equation
\begin{align*}
\operatorname{div}(\abs{Du}^{p-2}Du)=0,
\end{align*}
whose solutions are called $p$-harmonic functions (we will later discuss different concepts of solutions).
Our standing assumption is $$2\le p<\infty$$ for the simplicity of the exposition, even if $1< p\le \infty$ would be  possible as well.
After a computation, assuming $u$ is smooth and $Du\neq 0$, it can be rewritten as 
\begin{align*}
\Delta_p u&:=\operatorname{div}(\abs{Du}^{p-2}Du)\\
&=\abs{Du}^{p-2}\Big(\Delta u+(p-2)\abs{Du}^{-2}\langle D^2 uDu,Du \rangle \Big)\\
&=\abs{Du}^{p-2}\Big(\Delta u+(p-2)\abs{Du}^{-2}\sum_{i,j=1}^{n} u_{x_ix_j} u_{x_i} u_{x_j} \Big)=0,
\end{align*} 
where $u_{x_i}$ denotes the first and $u_{x_ix_j}$ the second partial derivatives.  Above 
\[
\begin{split}
D^2 u&=\begin{pmatrix}
u_{x_1x_1}&\ldots&u_{x_1x_n}\\
\vdots&\ddots&\vdots\\
u_{x_nx_1}&\ldots& u_{x_nx_n}
\end{pmatrix}
\end{split}
\]
is the Hessian matrix of second derivatives.
We often use a shorthand notation 
\begin{align*}
\Delta^{N}_{\infty}u:=\abs{Du}^{-2}\sum_{i,j=1}^{n} u_{x_ix_j} u_{x_i}u_{x_j}=\abs{Du}^{-2} \langle D^2 uDu,Du \rangle,
\end{align*}
for the normalized (also called as game theoretic) infinity Laplacian.  Moreover,
\[
\begin{split}
 \Delta_p^N u: =\Delta u+(p-2)\Delta_\infty^N u
\end{split}
\]
denotes the normalized (also called as game theoretic) $p$-Laplacian whenever $p<\infty$. 

Formally, if we consider the $p$-Laplace equation with the zero right hand side, we can divide/multiply by $\abs{D u}^{p-2}$ as long as the gradient is nonzero. This suggests that $\Delta_p u=0$ and $\Delta_p^N u=0$ are equivalent, which turns out to be the case with suitable interpretations.
 The name infinity Laplacian can be motivated by dividing the $p$-Laplace equation by $p$ and passing formally to the limit $p\to \infty$. For more on the $p$-Laplacian, see \cite{heinonenkm06} and \cite{lindqvist17}.

\subsubsection{Tug-of-war with noise}  
\label{sec:tgw}

It was a remarkable\footnote{From \cite{lindqvist17}: 'I cannot resist mentioning that it was one of the greatest mathematical surprises I have ever had, when I heard about the discovery.'}  discovery of Peres, Schramm, Sheffield and Wilson \cite{peresssw09} that a mathematical game called tug-of-war is connected to the infinity Laplace equation. To be more precise, value functions of the tug-of-war game approximate the infinity harmonic functions as the step size in the game tends to zero. A similar connection holds between the tug-of-war with noise and $p$-harmonic functions \cite{peress08}. 
Our presentation  mainly follows \cite{manfredipr12} in this section. 

%%%%%%%%%%
\noindent{\bf Random walk on balls:}
Let $\Om\subset \Rn$ be a bounded domain. As a first example consider a random walk, where the process is started at $x_0\in \Om$. The next point $$x_1\in B_\eps(x_0)$$ is chosen at random according to the uniform probability distribution on $B_\eps(x_0)$, a ball of radius $\eps$ centered at $x_0$. Then we continue in the same way from $x_1$ until we exit the bounded domain $\Om$. We get a payoff $F(x_{\tau})$, where $x_{\tau}$ is the first point outside $\Om$.

%%%%%%%%%%

 \noindent{\bf Tug-of-war with noise:} Next we add two competing players. As illustrated in Figure~\ref{fig:pelikuva}, a token is placed at $x_0\in \Om\subset \Rn$ and a biased coin with probabilities $\alpha$ and $\beta$ ($\alpha+\beta=1,\ \beta>0, \a\ge 0$) is tossed. If we get heads (probability $\beta$), then the next point $x_1\in B_\eps(x_0)$ is chosen according to the uniform probability distribution on $B_\eps(x_0)$ like above in the random walk on balls. 
 
 On the other hand, if we get tails (probability $\alpha$), then a fair coin is tossed and a winner of the toss moves the token to $x_1\in B_\eps(x_0)$. The players continue playing until they exit the bounded domain $\Om$. At the end of the game, Player II pays Player I the amount given by the payoff function $F(x_\tau)$, where $x_\tau$ is the first point outside $\Om$. 
%%%%%%%%%%%%%%%%%%%%%%

 \begin{figure}[h]
  \begin{tikzpicture}
    \draw[red, very thick] plot [smooth] coordinates {(1,0.78) (0,1.1) (-1,4) (0,6.8) (1,7) (1.5,4.5) (2,4) (2.5,4) (3,5) (3.9,4.4) (4,3) (2,1.1) (1,0.78)};
    \draw[blue, dashed, thick] (1,3) circle (0.5);
    \filldraw[black] (1,3) circle (1pt);
    \draw[thick] (1,3) -- (1.5,3) -- (1.6,3.4) -- (2.1,3.4) -- (2.45,3.75) -- (2.4,4.1);
    \filldraw[black] (2.4,4.1) circle (2pt) node[right] {$F(x_\tau)$};
    \draw[black, dashed, thick] (3.55,5.35) -- (4,5.1) -- (4.3,4.8) -- (4.52,4.35) -- (4.6,4.15) -- (4.65,3.65) -- (4.6,3.15);
    \node[left] at (4.75,3) {$\Gamma_\eps$};
    \node[left] at (1.1,3) {$x_0$};
    \node[below] at (1.25,3.05) {$\eps$};
    %chart
    \filldraw[black] (6,3) circle (1pt);
    \draw[thick, ->] (6,3) -- (8,4);
    \draw[thick, ->] (6,3) -- (8,2);
    \draw[thick, ->] (8,4) -- (9,5);
    \draw[thick, ->] (8,4) -- (9,3);
    \node[right] at (8,2) {a random point,};
    \node[right] at (8,1.6) {"noise"};
    \node[right] at (9,5) {max player};
    \node[right] at (9,3) {min player};
    \node[above] at (7,3.5) {$\alpha$};
    \node[above] at (7,4) {$\alpha+\beta=1$};
    \node[below] at (7,2.5) {$\beta$};
    \node[above] at (8.5,4.5) {I};
    \node[below] at (8.4,3.5) {II};
    \node at (8.8,4.4) {$\frac{1}{2}$};
    \node at (8.8,3.6) {$\frac{1}{2}$};
    \node[above] at (7,6.5) {At the end Player II pays Player I};
    \node[below] at (7,6.5) {the amount $F(x_\tau)$.};
  \end{tikzpicture}
    \caption{Tug-of-war with noise.}
    \label{fig:pelikuva}
 \end{figure}
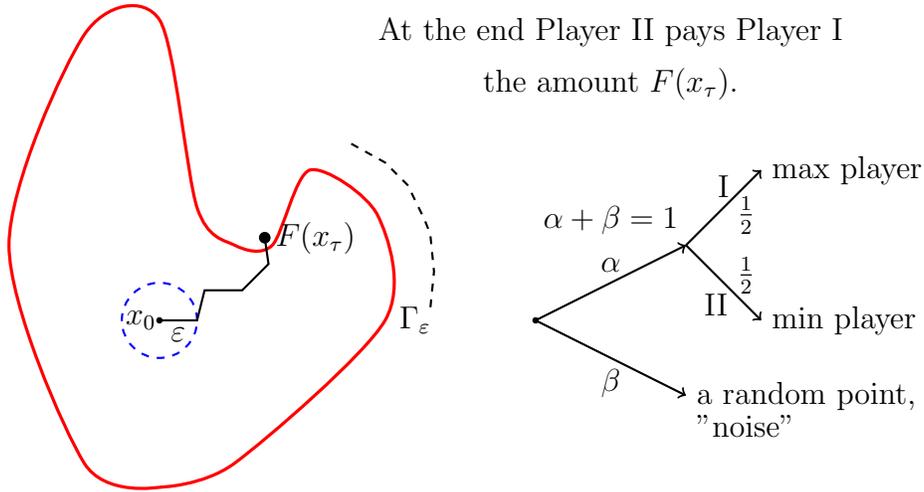

%%%%%%%%%%%%%%%%%%%%%%

We define the value of the game as an expectation over all the possible outcomes $F(x_{\tau})$ when Player I tries to maximize the outcome and Player II tries to minimize it. More precisely, the value\footnote{We later show that the order of $\inf$ and $\sup$ is irrelevant.} of the game is 
\begin{align}
\label{eq:elliptic-value}
u_\eps(x_0):=\sup_{S_{\I}}\inf_{S_{\II}}\,\mathbb{E}_{S_{\I},S_{\II}}^{x_0}[F(x_\tau)],
\end{align}
where  $x_0$ is a starting point of the game and $S_\I,S_{\II}$ are the strategies of the players. The strategies are Borel measurable functions acting on the history of the game and giving the next step in case the player wins the toss. In particular $$S_{\I}(x_0,\ldots,x_k)\in B_{\eps}(x_k).$$ 

Whenever the starting point and the strategies are fixed, the above rules of the game can be interpreted as a one step probability measure. Then the  Kolmogorov-Tulcea construction allows us to construct a probability measure on the space of game sequences, which is needed to define the above expectation. Moreover, since $\beta>0$ there is always some noise. 
 Since $\Om$ is bounded, the random steps force the game to end with probability one\footnote{Since $\beta>0$ and steps are independent, there is a positive probability $\theta$ of stepping further out consecutively until  exiting the domain. Then by the repeated trials, there is an upper bound of the form $(1-\theta)^k\to 0$, as $k\to \infty$, for the event that such consecutive stepping never occurs.}, and the above expectation is well defined. 

As we will later prove, the value satisfies the dynamic programming principle (DPP)
\begin{equation}
\label{eq:DPP-heur}
\begin{split}
u_{\eps}(x) =&\frac{\a}{2}\left\{ \sup_{B_{\eps}(x)} u_{\eps} + \inf_{B_{\eps}(x)}u_{\eps}\right\}+ \beta
\kint_{B_{\eps}(0)} u_{\eps}(x+h) \ud h
\end{split}
\end{equation} 
with 
\[
\begin{split}
u_{\eps}=F\ \text{on}\ \Gamma_\eps:=\{x\in \RR^n \setminus \Om\,:\,\dist(x, \Om) \leq \eps\}.
\end{split}
\]
 Above we used the notation
\[
\kint_{ B_{\eps}(0)} u_{\eps}(x+h) \ud h=\frac{1}{\abs{B_{\eps}(0)}}\int_{ B_{\eps}(0)} u_{\eps}(x+h) \ud h.
\]
 
 \idea Heuristically, in (\ref{eq:DPP-heur}) we get the value at $x$ by considering one step and  by summing up the three possible outcomes (either Player I or Player II wins the toss, or the random step occurs) with the corresponding probabilities.
 
We set
\[
\begin{split}
\a=\frac{p-2}{p+n},\quad \beta=1-\a=\frac{2+n}{p+n},
\end{split}
\]
for $2\le p<\infty$ in order to consider the connection to $p$-harmonic functions. The condition $2\le p$ implies that $\alpha\ge 0$, and  $p<\infty$ implies $\beta>0$. %%%%%%%%%%%%%%%%%%%%
\begin{example}
\label{ex:tgw-p2}
When $p=2$, then $\a=0$ and $\beta=1$. The tug-of-war with noise reduces to the random walk on balls, where the next point at $x$ is chosen according to the uniform probability distribution on $B_{\eps}(x)$ and the DPP takes the form
\[
\begin{split}
u_{\eps}(x)=\kint_{B_{\eps}(0)} u_{\eps}(x+h) \ud h.
\end{split}
\]  
 This can also be compared with Example \ref{ex:1d-random} and \ref{ex:2d-discrete}, where we had a discrete mean in the DPP.
\end{example}

One of the main results in these notes is that the value functions for the tug-of-war with noise converge to the corresponding $p$-harmonic functions (we give nonoptimal assumptions below for simplicity, and postpone the discussion on PDEs, the concept of a solution and uniqueness to Section \ref{sec:PDEs}). This is also illustrated in Figure~\ref{fig:convergence}. 
\begin{theorem}
\label{thm:sol-to-p-Laplace}
Let $\Om$ be a smooth domain, $F:\Rn\setminus \Om \to \R$ a smooth function, and $u$ the unique $p$-harmonic function to the problem
\[
\begin{split}
\begin{cases}
\operatorname{div}(\abs{D u}^{p-2}D u)=0 & \text{ in }\Om, \\
 u=F& \text{ on }\partial \Om. 
\end{cases}
\end{split}
\]
Further, let $u_{\eps}$ be the value function for the tug-of-war with noise with the boundary payoff function $F$. Then 
\[
\begin{split}
u_{\eps} \to u \text{ uniformly on } \ol \Om,
\end{split}
\]
as $\eps\to 0$.
\end{theorem}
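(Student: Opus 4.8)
The plan is to follow the standard strategy for passing from value functions of stochastic games to viscosity solutions: first establish compactness of the family $\{u_\eps\}$, then identify any uniform limit as a solution of the normalized $p$-Laplace equation $\Delta_p^N u=0$, and finally invoke uniqueness to conclude that the whole family converges to $u$. I would begin by recording the uniform bound. Since $F$ is smooth and $\partial\Om$ is compact, $F$ is bounded on the strip $\Gamma_\eps$, and because the right-hand side of the DPP (\ref{eq:DPP-heur}) is a convex combination of a sup, an inf, and an average of $u_\eps$, the bound $\min F\le u_\eps\le \max F$ propagates from $\Gamma_\eps$ into $\ol\Om$, uniformly in $\eps$.

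The crucial and most delicate step is to upgrade this to \emph{asymptotic equicontinuity}: there should be a modulus $\om$ with $\om(0+)=0$ and a constant $C$ such that $\abs{u_\eps(x)-u_\eps(z)}\le \om(\abs{x-z})+C\eps$ for all $x,z\in\ol\Om$ and all small $\eps$. Near the boundary this is obtained by a barrier argument exploiting the smoothness of $\Om$: at a boundary point the interior-ball condition lets one construct an explicit super/subsolution of the DPP that dominates, respectively is dominated by, $F$ and pins $u_\eps$ to its boundary value up to an error controlled by $\dist(\cdot,\partial\Om)$. In the interior the estimate is obtained by a probabilistic coupling of two copies of the game started at nearby points $x$ and $z$, coupling the noise steps by reflection and the tug steps so that the two tokens are driven together; the bound $\sim C/\eps^2$ on the number of steps noted in Example~\ref{ex:walk-laplace} keeps the accumulated error under control. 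Granting asymptotic equicontinuity, an Arzel\`a--Ascoli-type argument yields a subsequence $u_{\eps_j}\to v$ uniformly on $\ol\Om$ for some continuous $v$, with $v=F$ on $\partial\Om$.

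Next I would show that every such limit $v$ is a viscosity solution of $\Delta_p^N v=0$. The key is the consistency of the DPP: for a smooth test function $\vp$ with $D\vp(x)\neq 0$, Taylor expansion gives
\[
\begin{split}
\kint_{B_\eps(0)}\vp(x+h)\ud h &= \vp(x)+\frac{\eps^2}{2(n+2)}\Delta\vp(x)+o(\eps^2),\\
\tfrac12\Big(\sup_{B_\eps(x)}\vp+\inf_{B_\eps(x)}\vp\Big) &= \vp(x)+\frac{\eps^2}{2}\Delta_\infty^N\vp(x)+o(\eps^2),
\end{split}
\]
so that, with $\a=\frac{p-2}{p+n}$ and $\beta=\frac{n+2}{p+n}$, the right-hand side of (\ref{eq:DPP-heur}) applied to $\vp$ equals $\vp(x)+\frac{\eps^2}{2(p+n)}\Delta_p^N\vp(x)+o(\eps^2)$. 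Testing $v$ from below by $\vp$ at an interior touching point $x_0$, one finds nearby points $x_j\to x_0$ at which $\vp$ (plus a constant) lies below $u_{\eps_j}$ with equality; applying the monotone DPP operator and inserting the expansion, the $o(\eps^2)$ terms wash out after dividing by $\eps_j^2$ and letting $j\to\infty$, leaving $\Delta_p^N\vp(x_0)\le 0$. The reverse inequality for test functions touching from above gives the subsolution property. The degenerate case $D\vp(x_0)=0$ must be handled separately through the semicontinuous (relaxed) definition of viscosity solution for the normalized operator, since the expansion of the $\sup$/$\inf$ term degenerates there.

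Finally, by the equivalence between viscosity solutions of $\Delta_p^N v=0$ and $p$-harmonic functions (weak solutions of $\operatorname{div}(\abs{Dv}^{p-2}Dv)=0$), together with the uniqueness of the $p$-harmonic function with boundary data $F$, every uniform limit coincides with $u$. As the limit is independent of the subsequence, the full family converges and $u_\eps\to u$ uniformly on $\ol\Om$. The main obstacle is unquestionably the asymptotic equicontinuity estimate: the uniform bound and the consistency computation are routine, whereas producing a modulus of continuity uniform in $\eps$ --- in particular the interior estimate via game coupling and its matching to the boundary barriers under only the stated smoothness of $\Om$ --- is where the real work lies.
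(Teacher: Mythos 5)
Your proposal is correct and follows the same overall architecture as the paper: uniform bound from the DPP, asymptotic equicontinuity split into a boundary-barrier estimate and an interior estimate, an Arzel\`a--Ascoli argument adapted to possibly discontinuous $u_\eps$, identification of the limit as a viscosity solution of $\Delta_p^N u=0$ via the Taylor consistency of the DPP, and finally uniqueness plus the equivalence of viscosity and weak solutions from \cite{juutinenlm01}. Two points of divergence are worth noting. First, for the interior equicontinuity you propose a reflection coupling of two game copies; the paper's featured method is instead the cancellation-strategy argument of Section~\ref{sec:Lip-regularity} (with the cylinder walk controlling the failure probability), although the coupling-of-DPPs route you describe is exactly the alternative presented in Section~\ref{sec:intuition}, so your choice is legitimate --- the cancellation method yields an asymptotic Lipschitz bound for the specific tug-of-war with noise, while the coupling method is the one that generalizes to variable probabilities. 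Second, your final remark that the case $D\vp(x_0)=0$ "must be handled separately through the relaxed definition" is unnecessary in this elliptic setting: by \cite{juutinenlm01} (see Definition~\ref{eq:def-normalized-visc} and the surrounding discussion) test functions with vanishing gradient at the touching point may simply be discarded without losing uniqueness, and the paper exploits exactly this; the extra care you describe is only needed in the parabolic problem. A last cosmetic difference: the paper works with one-sided inequalities obtained from the Taylor expansion at the minimum point $x_1^\eps$ of the test function rather than the two-sided equality you write for the $\sup$/$\inf$ term, but for $D\vp(x)\neq 0$ both versions lead to the same conclusion.
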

\begin{figure}[h]
\includegraphics[scale=0.4]{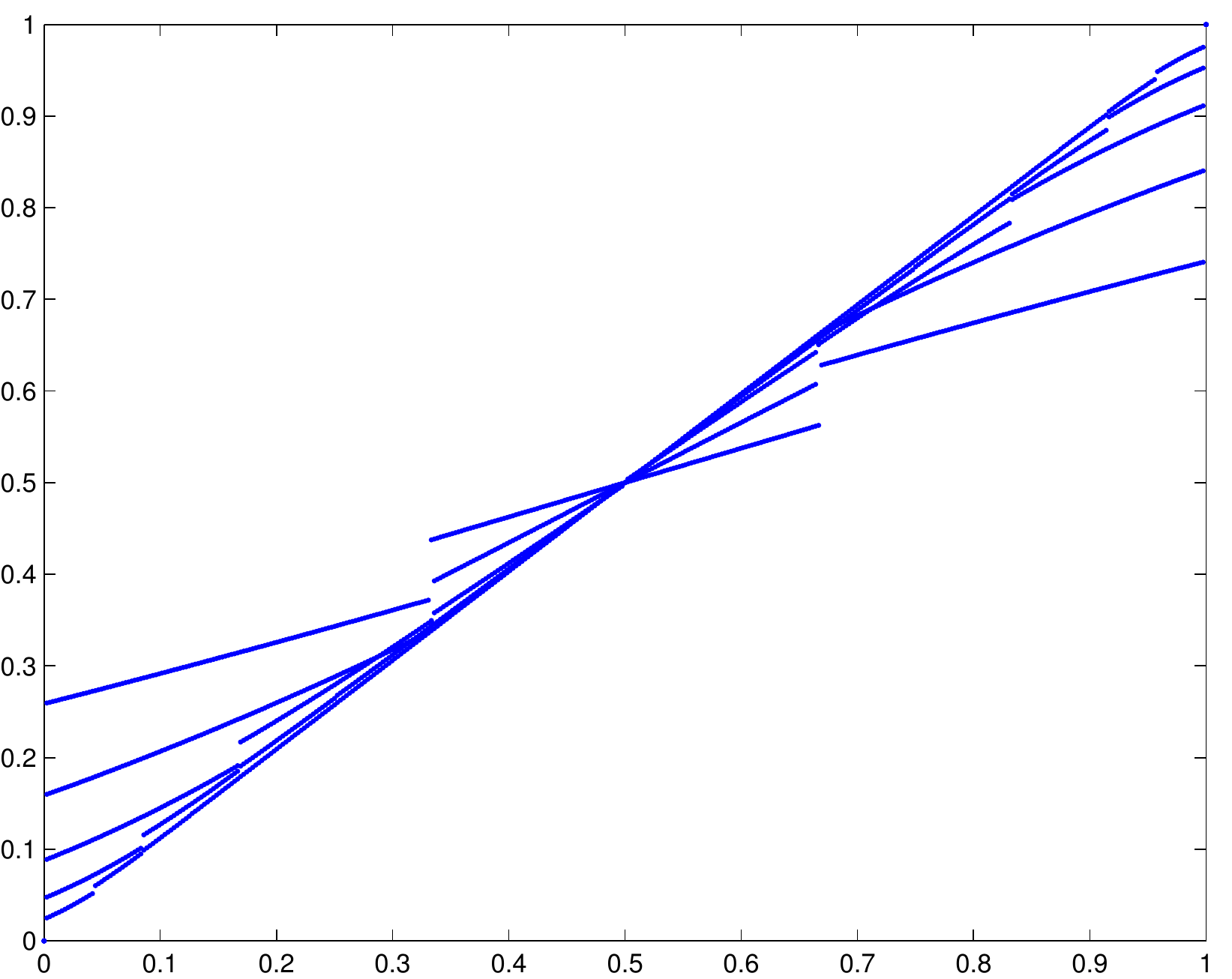}
\caption{Convergence of value functions on $[0,1]$ as the step size $\eps$ gets smaller. One dimensional example with boundary values 0 and 1 is simple, and the limit is just a line for any $p$. In terms of the game, the maximizing player (Player I) pulls to the right, and the minimizing player (Player II) to the left.}
\label{fig:convergence}
\end{figure}
More details on the proof is given in Section~\ref{sec:time-independent}, but here we give a heuristic computation explaining why the theorem should look reasonable. First we take the average of the usual Taylor expansion
\[
u(x+h)=u(x)+\langle D u(x), h \rangle 
+\frac12 \langle D^2u(x)h , h\rangle+\text{Error},
\]
over $B_{\eps}(0)$, where 
\begin{align*}
\langle D^2u\,h , h\rangle=\sum_{i,j=1}^n u_{x_ix_j}h_ih_j.
\end{align*}
We get
\[
\begin{split}
&\kint_{B_{\eps}(0)}u(x+h)\ud h\\
&=u(x)+\kint_{B_{\eps}(0)} \langle D u(x), h \rangle  \ud
h+ \half\kint_{B_{\eps}(0)}\langle D^2u(x)
h,h\rangle \ud h
+\text{Error}.
\end{split}
\]
Because of symmetry, the first integral on the right hand side vanishes. Similarly in the second integral, only the second derivatives to the coordinate directions remain
\begin{align*}
\half&\kint_{B_{\eps}(0)}\langle D^2u\,h,h\rangle \ud h=\half\kint_{B_{\eps}(0)}\sum_{i,j=1}^n  u_{x_ix_j}h_ih_j \ud h\\
&=\half\sum_{i,j=1}^n  u_{x_ix_j}\kint_{B_{\eps}(0)}h_ih_j \ud h
=\half\sum_{i=1}^n  u_{x_ix_i}\kint_{B_{\eps}(0)}h_i^2 \ud h.
\end{align*}
Observe that  by symmetry for any $i$, we can calculate
\begin{align*}
\kint_{B_{\eps}(0)}h_i^2 \ud h&=\frac{1}{n}\kint_{B_{\eps}(0)}\sum_{j=1}^n h_j^{2} \ud h\\
&=\frac{1}{n}\kint_{B_{\eps}(0)} \abs{h}^{2} \ud h\\
&=\frac1{n\abs{B_{\eps}}} \int_0^{\eps}\int_{\partial B_{\rho}(0)}\rho^2 \ud S \ud \rho\, \\ &=\frac1{n\abs{B_{\eps}}} \int_0^{\eps}\sigma_{n-1}\rho^{n-1}\rho^2 \ud \rho\, =\frac{\sigma_{n-1}}{n(n+2)\om_{n}\eps^{n}} \eps^{n+2} \\
&=\frac{\sigma_{n-1}}{n(n+2)\om_{n}} \eps^{2}.
\end{align*}
Above $\om_n$ denotes the measure of the $n$-dimensional unit ball $B_1(0)$ and $\sigma_{n-1}$ the measure of the sphere  $\partial B_1(0)$.
Observing that
 \[
\begin{split}
\om_{n}=\int_0^1 \sigma_{n-1} \rho^{n-1}\ud \rho=\frac{\sigma_{n-1}}{n},
\end{split}
\]
 we get
 \[
\begin{split}
\frac{\sigma_{n-1}}{n(n+2)\om_{n}} \eps^{2} =\frac{\eps^2}{n+2} .
\end{split}
\]
Thus we have
\begin{align*}
\half&\sum_{i=1}^n  u_{x_ix_i}\kint_{B_{\eps}(0)}h_i^2 \ud h=\frac{\eps^2}{2(n+2)} \Delta u.
\end{align*}
Combining the above, we finally obtain
\begin{equation}
\label{exp.lapla} \intav_{B_{\eps}(0)} u(x+h)\ud h =  u(x) +
  \frac{\eps^2}{2(n+2)}\Delta u (x)  + \text{Error}.
\end{equation}

On the other hand, by assuming $D u(x)\neq 0$, and evaluating the Taylor expansion with $h=\pm \eps\frac{D u(x)}{\abs{D
u(x)}}$, we have
\begin{align*}
u\left(x+ \eps\frac{D u}{\abs{D
u}}\right)&\approx u(x)+\left\langle  D u(x),  \eps\frac{D u}{\abs{D
u}} \right\rangle 
+\frac{\eps^2}{2} \left\langle  D^2u(x) \frac{D u}{\abs{D
u}} ,  \frac{D u}{\abs{D
u}}\right\rangle ,\\
u\left(x-\eps\frac{D u}{\abs{D
u}}\right)&\approx u(x)-\left\langle  D u(x),  \eps\frac{D u}{\abs{D
u}} \right\rangle  
+\frac{\eps^2}{2} \left\langle  D^2u(x) \frac{D u}{\abs{D
u}} , \frac{D u}{\abs{D
u}}\right\rangle,
\end{align*}
where we dropped variables and '$+ \text{Error}$' to save space. Further, we approximate\footnote{For details of such an approximation, see for example \cite[Lemma 2.3]{peress08} or \cite[Theorem 3.4]{lewicka20} but our actual proof in Section~\ref{sec:convergence-p}  is organized slightly differently.}
\begin{align*}
u\left(x+ \eps\frac{D u(x)}{\abs{D u(x)}}\right)\approx \sup_{B_{\eps}(x)} u,\quad 
u\left(x-\eps\frac{D u(x)}{\abs{D u(x)}}\right)\approx \inf_{B_{\eps}(x)}u.
\end{align*}
Combining the above approximations, we get
\begin{equation}
\label{exp.infty}
\begin{split}
\frac{1}{2} &\left\{ \sup_{B_{\eps}(x)} u + \inf_{B_{\eps}(x)}u\right\}\\
&=\frac{1}{2} \left\{ u \left(x+\eps\frac{D u(x)}{\abs{D
u(x)}} \right)+ u\left(x-\eps\frac{D u(x)}{\abs{D u(x)}}\right)\right\}+ \text{Error}\\
&= u(x)+ \frac{\eps^2}{2} \Delta^N_\infty u (x) + \text{Error},
\end{split}
\end{equation}
where we recall
\begin{align*}
 \Delta_\infty^N u=\abs{D u}^{-2}\sum_{i,j=1}^{n}  u_{x_ix_j} u_{x_i}u_{x_j}=\left\langle  D^2 u\frac{D u}{\abs{D
u}} , \frac{D u}{\abs{D
u}}\right\rangle .
\end{align*}
Next if we multiply \eqref{exp.lapla} and \eqref{exp.infty} by $\beta$ and $\alpha$ respectively, and add
up the formulas, we get the normalized $p$-Laplace operator on the right hand side i.e.\
\begin{equation}
\label{eq:asymp-exp}
\begin{split}
u(x) =&\frac{\a}{2}\left\{ \sup_{B_{\eps}(x)} u + \inf_{B_{\eps}(x)}u\right\}+ \beta
\kint_{B_{\eps}(0)} u(x+h) \ud h \\
&- \frac{\beta \eps^2}{2(n+2)}\underbrace{(\Delta u+ (p-2)\Delta_{\infty}^N u (x))}_{\Delta_p^N u}+\text{Error}.
\end{split}
\end{equation}
Now, omitting the Error, if a smooth function with nonvanishing gradient satisfies the DPP (\ref{eq:DPP-heur}) at $x$, then  $\Delta_p^N u(x)=0$ and vice versa. This  suggests a connection between game values and $p$-harmonic functions. 

\subsubsection{Time dependent values and tug-of-war with noise}
\label{sec:intro-time-dependent} There is also a time tracking version of the tug-of-war with noise which is related to the normalized (or game theoretic) $p$-parabolic equation
\[
\begin{split}
(n+p)u_t=\Delta_p^N u,
\end{split}
\]
as described in \cite{manfredipr10c}.
The game is otherwise similar to the tug-of-war with noise in the previous section, but we also keep track of time similar to Example \ref{ex:1d-random-time}: suppose that we are given $t_0>0$ seconds at the beginning, and each step reduces $\eps^2/2$ (factor $1/2$ is only for convenience here) seconds of the remaining time. In other words, if we had time $t$ left before the step, after the step we have 
\begin{align*}
t-\frac{\eps^2}{2}.
\end{align*}
The game ends if we exit the domain, or at latest when the time runs out (remaining time is $\le 0$), and $\tau$ denotes the number of the round when we stop. Let $t_{\tau}=t_0-\tau \eps^2/2$ denote the time when we exit the domain or run out of time, and $x_{\tau}$ the corresponding point. The payoff is given by $F(x_{\tau},t_{\tau})$. In the case when the time runs out, $F(\cdot,t_{\tau})$ also needs to be defined inside the domain.

We define the game value in a similar way  as in (\ref{eq:elliptic-value}) by taking $\inf$  over strategies of Player II and $\sup$ over strategies of Player I
\[
\begin{split}
u_{\eps}(x_0,t_0)=\sup_{S_{\I}}\inf_{S_{\II}}\mathbb E^{x_0,t_0}[F(x_\tau,t_\tau)].
\end{split}
\]
As we will later see, the game values satisfy the dynamic programming principle (DPP)
\[
\begin{split}
u_{\eps}(x,t)&= \frac{\alpha}{2}
\left\{ \sup_{y\in  B_\eps (x)} u_{\eps}\Big(y,t-\frac{\eps^2}{2}\Big) +
\inf_{y\in  B_\eps (x)} u_{\eps}\Big(y,t-\frac{\eps^2}{2}\Big) \right\}
\\
&\hspace{1 em}+\beta \kint_{B_\eps (x)} u_{\eps}\Big(y,t-\frac{\eps^2}{2}\Big)
\ud y .
\end{split}
\]
\idea We get the value at the point $x$ at time $t$ by considering one step  and summing up the three possible outcomes that happen at time  $t-\frac{\eps^2}{2}$ (either Player I or Player II wins the toss, or a random step occurs) with the corresponding probabilities.

\begin{example}[Time dependent expectation in the random walk]
\label{eq:rw-time} Let $\alpha=0$ and $\beta=1$.
This means that at $x$ we choose a random point in $B_{\eps}(x)$ according to the uniform probability distribution and the step takes $\eps^2/2$ seconds. 
Now $u$ simply denotes the expected payoff 
\[
\begin{split}
u_{\eps}(x_0,t_0)=\mathbb E^{x_0,t_0}[F(x_\tau,t_\tau)].
\end{split}
\]
The DPP reads as
\begin{align*}
u_{\eps}(x,t)= \kint_{B_\eps (x)} u_{\eps}\Big(y,t-\frac{\eps^2}{2}\Big)
\ud y.
\end{align*}
\end{example}

Next we define a shorthand notation 
\begin{align*}
\Om_T=\Omega\times (0,T),
\end{align*}
and
the parabolic boundary 
\begin{equation}
\label{eq:parabolic-boundary}
\begin{split}
\partial_p \Om_T=\partial_p (\Om\times(0,T))=(\ol \Om\times \{0\})\cup (\partial \Om \times (0,T]).
\end{split}
\end{equation}
Details on viscosity solutions and their uniqueness are postponed to Section~\ref{sec:PDEs}.
\begin{theorem}
\label{thm:sol-to-p-parabolic}
Let $\Om$ be a smooth domain, $F:\R^{n+1}\setminus\Om_T\to \R$ a smooth function, and $u$ the unique viscosity solution to the normalized $p$-parabolic boundary value problem
\[
\begin{split}
\begin{cases}
(n+p)u_t-\Delta_p^N u=0 & \text{ in }\Om \times (0,T]\\
 u=F& \text{ on }\partial_p \Om_T.
\end{cases}
\end{split}
\]
Further, let $u_{\eps}$ be the value function for the time tracking tug-of-war with noise with the boundary payoff function $F$. Then 
\[
\begin{split}
u_{\eps} \to u \text{ uniformly on } \ol \Om\times [0,T],
\end{split}
\]
as $\eps\to 0$.
\end{theorem}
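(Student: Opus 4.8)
The plan is to transplant the scheme behind Theorem~\ref{thm:sol-to-p-Laplace} to the parabolic setting: produce uniform estimates on the family $\{u_\eps\}$, extract a uniformly convergent subsequence by a compactness argument, identify the limit as a viscosity solution of the normalized $p$-parabolic problem through the asymptotic expansion, and finally invoke uniqueness to upgrade subsequential convergence to convergence of the whole family.

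First I would record the basic facts about $u_\eps$. Because every step consumes $\eps^2/2$ seconds and we start with $t_0\le T$ seconds, the clock alone forces the game to terminate after at most $2T/\eps^2$ rounds (consistent with the $C/\eps^2$ bound observed after Example~\ref{ex:walk-laplace}), so the expectation defining $u_\eps$ is meaningful and $u_\eps$ solves the parabolic DPP with $u_\eps=F$ on $\partial_p\Om_T$ (including the final-time data when the clock runs out). Since the DPP expresses $u_\eps(x,t)$ as a convex combination of values at the earlier time $t-\eps^2/2$, a comparison argument for the DPP gives the $\eps$-independent bound $\norm{u_\eps}_{L^\infty}\le\norm{F}_{L^\infty}$.

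The decisive step is \emph{asymptotic uniform continuity} of $\{u_\eps\}$ on $\ol\Om\times[0,T]$: for each $\eta>0$ there exist $\delta>0$ and $\eps_0>0$ so that $\abs{u_\eps(x,t)-u_\eps(y,s)}<\eta$ whenever $\abs{x-y}+\abs{t-s}<\delta$ and $\eps<\eps_0$. Near the parabolic boundary I would use the smoothness of $\Om$ and $F$ to construct explicit barrier (comparison) functions of paraboloid type that one player can use to force the value to stay close to $F$; the interior and exterior sphere conditions coming from the boundary regularity of $\Om$ are what make these barriers available, and this is where I expect the main technical work to lie. Interior regularity, and the coupling between the spatial and time increments, I would obtain by a translation/cancellation argument comparing the game started at $(x,t)$ with the game started at a nearby $(y,s)$ while the players mirror each other's pulls, controlling the residual by the quantitative bound on the number of rounds.

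Granting these estimates, an Arzel\`a--Ascoli type theorem (in its asymptotically-equicontinuous form) yields a subsequence $u_{\eps_j}\to u$ uniformly on $\ol\Om\times[0,T]$ with $u=F$ on $\partial_p\Om_T$. To see that $u$ is a viscosity solution, let a smooth $\phi$ touch $u$ from above at an interior point $(x,t)$ with $D\phi(x,t)\neq0$. Evaluating the DPP for $\phi$ at an (asymptotic) maximum point of $u_\eps-\phi$, expanding the spatial average as in \eqref{eq:asymp-exp} and expanding in time via $\phi(x,t-\eps^2/2)=\phi(x,t)-\tfrac{\eps^2}{2}\phi_t+\mathrm{Error}$, one is led to
\[
0\le -\frac{\eps^2}{2}\phi_t(x,t)+\frac{\beta\eps^2}{2(n+2)}\Delta_p^N\phi(x,t)+\mathrm{Error}.
\]
Dividing by $\eps^2$, sending $\eps\to0$, and using $\beta=\tfrac{n+2}{p+n}$ gives the subsolution inequality $(n+p)\phi_t-\Delta_p^N\phi\le0$; touching from below yields the reverse, so $u$ is a viscosity solution. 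The degenerate case $D\phi(x,t)=0$ must be handled separately, through the semicontinuous relaxation of $\Delta_\infty^N$ in the definition of viscosity solution, since the normalized infinity Laplacian is discontinuous there. Finally, the uniqueness of viscosity solutions to the boundary value problem (Section~\ref{sec:PDEs}) forces the limit to be independent of the subsequence, so the entire family $u_\eps$ converges uniformly to $u$ as claimed.
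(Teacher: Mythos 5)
Your overall scheme---uniform $L^\infty$ bounds, an asymptotic Arzel\`a--Ascoli argument, identification of the limit as a viscosity solution via the DPP combined with the Taylor expansion, and finally uniqueness to remove the subsequence---is exactly the route the paper takes in Section~\ref{sec:parab-convergence}, and your computation for test functions with $D\phi(x,t)\neq 0$ matches the paper's: your displayed inequality is \eqref{eq:parab-taylor-comb-pointwise} combined with the DPP evaluated at the approximate extremum of $u_\eps-\phi$, and the constants work out since $\tfrac{\beta(n+p)}{n+2}=1$. The regularity near the parabolic boundary and in the interior is likewise only sketched in the paper (with references), so no complaint there.

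The one place your argument would not go through as written is the degenerate case $D\phi(x,t)=0$. Verifying the semicontinuous-envelope definition directly from the expansion requires, when touching from below, the inequality $(n+p)\phi_t(x,t)\ge(p-2)\lambda_{\min}(D^2\phi(x,t))+\Delta\phi(x,t)$. But the expansion only produces the term $(p-2)\left\langle D^2\phi(x,t)\,\eta,\eta\right\rangle$ where $\eta$ is a subsequential limit of $(x_1^{\eps,t_\eps-\eps^2/2}-x_\eps)/\eps$; when the gradient vanishes this vector need not have unit length (if $D^2\phi(x,t)$ is positive definite, the minimum of $\phi(\cdot,t-\eps^2/2)$ over $\ol B_\eps(x_\eps)$ is attained near the center, so $\eta$ can even vanish), and then $(p-2)\langle D^2\phi\,\eta,\eta\rangle$ does not dominate $(p-2)\lambda_{\min}(D^2\phi)$ whenever $\lambda_{\min}>0$. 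The paper sidesteps this with Theorem~\ref{thm:test-functions-neglect}: when $D\phi(x,t)=0$ one may additionally restrict to test functions with $D^2\phi(x,t)=0$, after which the whole second-order contribution is $o(\eps^2)$ and the claim reduces to $\phi_t(x,t)\ge 0$, which does follow upon dividing by $\eps^2$. You should either invoke that test-function reduction or supply an equivalent device; as stated, ``handle the degenerate case through the semicontinuous relaxation'' leaves the essential difficulty unaddressed.
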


More details on the proof is given in Section~\ref{sec:parab-convergence}, but here we give a heuristic justification.
For simplicity, we concentrate on random walk, where  $p=2$ i.e.\ $\alpha=0$, and $\beta=1$.
By the Taylor expansion
\[
\begin{split}
&\kint_{B_{\eps}(0)}u(x+h,t-s)\ud h\\
&=u(x,t)+\kint_{B_{\eps}(0)} \langle D u(x,t), h \rangle  \ud
h+ \half\kint_{B_{\eps}(0)}\langle D^2u(x,t)
h,h\rangle \ud h\\
&- u_t(x,t)s
+\text{Error}.
\end{split}
\]
where error depends on $\eps^2+s$. If we choose $s=\eps^2/2$, the error depends on $\eps^2$.
Similarly as in (\ref{exp.lapla}), we have
\[
\begin{split}
&\kint_{B_{\eps}(0)}u(x+h,t-s)\ud h\\
&=u(x,t)+0+ \frac{\eps^2}{2(n+2)}\Big(\Delta u(x,t)- (n+2) u_t(x,t)\Big)+\text{Error}.
\end{split}
\]
Thus this suggests a connection to the heat equation type problems.

\section{Viscosity solutions} 
\label{sec:PDEs}

In this section, we give a brief introduction to viscosity solutions. For more on viscosity solutions, see for example \cite{koike04}, \cite{giga06}, \cite{lindqvist12} or \cite{katzourakis15}.

\subsection{Elliptic equations}

We say that a function $\phi$ touches a function $u$ from above at $x\in \Om$, if 
\[
\begin{split}
\phi(x)=u(x)\quad \text{and}\quad  u(y)<\phi(y) \text{ when }y\neq x.
\end{split}
\] 
Similarly, we say that a function $\phi$ touches a function $u$ from below at $x\in \Om$, if 
\[
\begin{split}
\phi(x)=u(x)\quad \text{and}\quad  u(y)>\phi(y) \text{ when }y\neq x.
\end{split}
\] 
Recall the notation
\[
\begin{split}
\Delta_p u&:=\operatorname{div}(\abs{Du}^{p-2}Du).
\end{split}
\]
Also recall that $C(\Om)$ denotes the space of continuous functions in $\Om$, and $C^{2}(\Om)$ the space of twice continuously differentiable functions in $\Om$.
\begin{definition}[Viscosity solution]
\label{eq:viscosity-sol}
A function $u\in C(\Om)$ is a viscosity solution to the $p$-Laplace equation if 
whenever $\phi\in C^{2}(\Om)$ touches $u$ at $x\in \Om$ from  below it holds that
\[
\begin{split}
\Delta_p \phi(x)\le 0,
\end{split}
\] 
and whenever $\phi\in C^{2}(\Om)$ touches $u$ at $x\in \Om$ from  above it holds that
 \[
\begin{split}
\Delta_p \phi(x)\ge 0.
\end{split}
\]
\end{definition}

Here are some further remarks:
\begin{itemize}
\item A function $u$ satisfying the first half 
of the definition  is called a supersolution, and the function satisfying the second half is called a subsolution. It turns out that a supersolution remains above a solution if the boundary values are in the same order, and  similarly a subsolution remains below a solution. 
\item Observe that if $u$ is a smooth solution to 
\begin{align*}
\Delta u=0\text{ in }\Om,
\end{align*}
and  $\phi\in C^{2}(\Om)$ touches $u$ at $x\in \Om$ from  below, then 
\begin{align*}
u-\phi \text{ has a local min at }x.
\end{align*}
For local minimum, it holds
\begin{align*}
D^{2}u(x)-D^{2}\phi(x) \ge  0
\end{align*}
meaning $\left\langle \left(D^{2}u(x)-D^{2}\phi(x)\right)\eta,\eta \right\rangle \ge 0$ for any $\eta\in \R^n,\ \abs{\eta}=1$. From this it follows that 
\begin{align*}
0=\Delta u(x)\ge \Delta \phi(x)
\end{align*}
so a classical solution is a viscosity supersolution (and also subsolution by a similar argument).
\item We required strict touching in the definition above, but we could have used 
\[
\begin{split}
\phi(x)=u(x)\quad \text{and}\quad  u(y)\ge \phi(y) \text{ when }y\neq x,
\end{split}
\] 
as well. This can be seen looking at $\psi(y):=\phi(y)-\abs{y-x}^{4}$ which touches $u$ strictly from below, and for which Definition~\ref{eq:viscosity-sol}  implies $0\ge \Delta_p \psi(x)=\Delta_p \phi(x).$
\item Historically the definition of viscosity solutions was motivated by the vanishing viscosity method:
Solve 
\[
\begin{split}
\begin{cases}
(u')^2=1&\text{ in } (-1,1),\\
u(\pm 1)=0& 
\end{cases}
\end{split}
\]
by adding a smoothing viscosity term $-\eps u_\eps''(x)$. This gives
\[
\begin{split}
\begin{cases}
-\eps u_\eps''(x)+(u_\eps')^2=1 & \text{ in } (-1,1),\\
 u_\eps(\pm1)=0.& 
\end{cases}
\end{split}
\]
By  passing to the limit $\eps\to 0$ (viscosity vanishes), we obtain 
\[
\begin{split}
u_{\eps}(x)\to 1-\abs{x}
\end{split}
\]
uniformly.  One can show that this is the unique viscosity solution to the original equation. Observe that it is not a classical solution!

\item Since $u\in C(\Om)$, one cannot always find $\phi\in C^{2}(\Om)$ touching $u$ from below (or above). If the set of test functions is empty, then the definition is automatically satisfied. 
 \item There are several equivalent variants of the definition. Instead of touching, one can take local $\min$/$\max$ of $u-\phi$, or use second order semi jets $J^{2,\pm} u(x), \ol J^{2,\pm} u(x)$ detailed for example in \cite[Section 2.2]{koike04}. The jets highlight the fact that it is just the ingredients in the second order Taylor expansion that are needed in the definition of viscosity solutions to second order equations.
 \end{itemize}

After these remarks, recall that
\[
\begin{split}
\Delta_p u&=\divt\left(|D u|^{p-2}
D u\right)=|D u|^{p-2}(\Delta u+(p-2)\Delta^N_{\infty}u)\\
&=|D u|^{p-2}\Delta_{p}^N u.
\end{split}
\] 
Observe that in Definition \ref{eq:viscosity-sol}, for $p>2$,
we can neglect the test functions with $D\phi(x)=0$ at the point of touching, since the equation is always satisfied in this case. This   also holds for the full range $1<p<\infty$ by \cite{juutinenlm01} so that the definition still guarantees the uniqueness. Thus we can rewrite the viscosity definition, Definition \ref{eq:viscosity-sol}, as follows.

%%%%%%%%%%%%%%%%%%%%

\begin{definition}
\label{eq:def-normalized-visc}
A function $u\in C(\Om)$ is a viscosity solution to the normalized $p$-Laplace equation $\Delta_p^N u=0$ if 
whenever $\phi\in C^{2}(\Om)$ touches $u$ at $x\in \Om$ with $D\phi(x)\neq0$ from  below it holds that
\[
\begin{split}
\Delta_p^N \phi(x)\le  0,
\end{split}
\] 
and whenever $\phi\in C^{2}(\Om)$ touches $u$ at $x\in \Om$ with $D\phi(x)\neq0$ from  above it holds that
 \[
\begin{split}
\Delta_p^N \phi(x)\ge 0.
\end{split}
\]
\end{definition} 

\sloppy
We could follow the usual path of defining viscosity solutions for singular equations through the semicontinuous envelopes of $\Delta_p^N$ like for example in \cite{giga06, crandallil92}. That this is equivalent with the above definition is shown in \cite[Theorem 6]{kawohlmp12}.

It would also be possible to consider viscosity solutions to $\Delta_p^N u=f$ but then the test functions with zero gradient cannot be omitted. As a further remark, if $f>0$ or $f<0$, then the uniqueness follows from the comparison principle \cite[Theorem 5]{kawohlmp12} but if $f$ touches zero or changes sign, to the best of my knowledge, this is an open problem. It is also known that a viscosity solution to $\Delta_p^N u=f$ is a weak (distributional) solution to $\Delta_p u=\abs{Du}^{p-2}\Delta_p^N u=\abs{Du}^{p-2}f,$ but not necessarily vice versa, and $u\in C^{1,\gamma}$, \cite{attouchipr17}.

The following theorem can be found in \cite{juutinenlm01}, which also accounts for the uniqueness in Theorem~\ref{thm:sol-to-p-Laplace}. As stated above, Definitions~\ref{eq:viscosity-sol} and \ref{eq:def-normalized-visc} are equivalent, and thus either of them can be used as a definition of a viscosity solution to the $p$-Laplace equation. 
\begin{theorem}
\label{thm:uniqueness}
Let  $g\in C(\partial \Om)$ and $u\in C(\ol \Om)$ a viscosity solution to the $p$-Laplace equation such that $u=g$ on $\partial \Om$. Then $u$ is unique.
\end{theorem}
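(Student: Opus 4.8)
\emph{Proof proposal.} The statement is the uniqueness of a continuous viscosity solution of the Dirichlet problem for $\Delta_p u=0$, and for a degenerate elliptic operator this is governed by a comparison principle: if $w$ is a subsolution and $v$ a supersolution (in the sense of the two halves of Definition~\ref{eq:viscosity-sol}) with $w\le v$ on $\partial\Om$, then $w\le v$ in $\ol\Om$. Granting this, uniqueness is immediate: if $u_1,u_2\in C(\ol\Om)$ both solve the problem with $u_1=u_2=g$ on $\partial\Om$, then $u_1$ is a subsolution and $u_2$ a supersolution with equal boundary data, so $u_1\le u_2$; swapping their roles gives $u_2\le u_1$, hence $u_1=u_2$. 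So the whole task is the comparison principle, which I would establish by one of the two routes below.

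Route via weak solutions (the one I would actually take). This is the cleanest path and it is exactly what \cite{juutinenlm01} supplies. By the equivalence of viscosity and weak solutions proved there, a continuous viscosity solution of $\Delta_p u=0$ is a weak solution, i.e.\ $u\in W^{1,p}_{\mathrm{loc}}(\Om)$ and $\int_{\Om}\abs{Du}^{p-2}\langle Du,D\vp\rangle\ud x=0$ for all $\vp\in C_c^\infty(\Om)$. If $u_1,u_2$ are two solutions with the same boundary data, then $u_1-u_2\in W^{1,p}_0(\Om)$ (using $u_i\in C(\ol\Om)$ and smoothness of $\partial\Om$); testing the difference of the two weak formulations with $\vp=u_1-u_2$ yields
\[
\int_{\Om}\langle \abs{Du_1}^{p-2}Du_1-\abs{Du_2}^{p-2}Du_2,\;Du_1-Du_2\rangle\ud x=0.
\]
Since $2\le p<\infty$, the vector field $\xi\mapsto\abs{\xi}^{p-2}\xi$ is strictly monotone, so the integrand is nonnegative and vanishes only where $Du_1=Du_2$. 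Hence $Du_1=Du_2$ a.e., so $u_1-u_2$ is constant on the connected domain, and being in $W^{1,p}_0(\Om)$ it must vanish.

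Route via viscosity comparison (the intrinsic argument). Here I would argue by contradiction, assuming $M:=\max_{\ol\Om}(w-v)>0$, which by the boundary ordering is attained at an interior point. Doubling the variables with $\Phi_j(x,y)=w(x)-v(y)-\tfrac{j}{2}\abs{x-y}^2$ and taking a maximizer $(x_j,y_j)$, the usual penalization lemma gives $j\abs{x_j-y_j}^2\to0$ and $x_j,y_j$ interior for large $j$; the Crandall--Ishii theorem on sums then yields symmetric matrices with $X\le Y$ and common ``gradient'' $\eta_j=j(x_j-y_j)$. When $\eta_j\neq0$ the sub/supersolution conditions give, after dividing by $\abs{\eta_j}^{p-2}>0$,
\[
\tr X+(p-2)\frac{\langle X\eta_j,\eta_j\rangle}{\abs{\eta_j}^2}\ge0\ge\tr Y+(p-2)\frac{\langle Y\eta_j,\eta_j\rangle}{\abs{\eta_j}^2},
\]
while $X\le Y$ and $p-2\ge0$ force the reverse inequality between the two sides. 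I expect the main obstacle to sit exactly here: with zero right-hand side these inequalities only produce $0\le0$ rather than a contradiction, and the degenerate case $\eta_j=0$ (handled via the gradient reduction of Definition~\ref{eq:def-normalized-visc}) must be excluded separately. Breaking the degeneracy requires replacing $w$ by a \emph{strict} subsolution---for instance $\phi(x)=e^{\lambda x_1}$ satisfies $\Delta_p^N\phi=(p-1)\lambda^2e^{\lambda x_1}>0$---and since additive perturbation does not interact simply with the nonlinear operator, making this rigorous is the delicate point carried out in \cite{juutinenlm01}.
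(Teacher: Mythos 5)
Your first route is exactly what the paper does: it offers no proof of Theorem~\ref{thm:uniqueness} at all, deferring entirely to \cite{juutinenlm01}, whose argument runs through the equivalence of viscosity and weak solutions and then the strict monotonicity of $\xi\mapsto\abs{\xi}^{p-2}\xi$ (your second, intrinsic route correctly identifies why the pure doubling argument stalls at $0\le 0$, which is precisely why that reference is needed). The one point to tighten in your energy argument is that a continuous viscosity solution is a priori only in $W^{1,p}_{\mathrm{loc}}(\Om)$, so instead of asserting $u_1-u_2\in W^{1,p}_0(\Om)$ (which also quietly imports a boundary smoothness hypothesis the theorem does not make) one should test with the compactly supported truncation $(u_1-u_2-\delta)_+$, which vanishes near $\partial\Om$ by continuity and equality of the boundary data, and then let $\delta\to 0$.
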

Finally, let us remark that by \cite{juutinenlm01}, see also \cite{julinj12}, the weak (distributional) solutions to the $p$-Laplace equation are equivalent to the viscosity solutions to  the $p$-Laplace equation.

\subsection{Parabolic equations}

%More details also in the lecture notes \url{http://users.jyu.fi/~miparvia/Opetus/Viscosity/lecturenoteVisc2015.pdf}. 
Next we consider viscosity solutions to the normalized or game theoretic $p$-parabolic equation 
\begin{align*}
(n+p)u_t=\Delta_p^N u
\end{align*}
with $2\le p<\infty$.
Now the test functions with the vanishing gradient cannot be completely neglected. We take the standard path of defining viscosity solutions for a singular equation through the semicontinuous envelopes  like for example in \cite{giga06, crandallil92}. In the definition of a parabolic viscosity supersolution (the first half below), if the gradient vanishes, the normalized $p$-Laplacian is defined  as
\begin{align*}
\liminf_{0\neq \eta\to 0}& F(\eta, D^2 \phi(x,t))\\
&=\liminf_{0\neq \eta\to 0} \left(\Delta \phi(x,t)+(p-2)   \left\langle D^{2}\phi(x,t)\frac{\eta}{\abs{\eta}},\frac{\eta}{\abs{\eta}} \right\rangle\right)\\
&=\Delta \phi(x,t)+(p-2)\lambda_{\min}(D^{2}\phi(x,t)),  
\end{align*}
where $\lambda_{\min}(D^{2}\phi(x,t))$ is the smallest eigenvalue of $D^{2}\phi(x,t)$. Similarly in the definition of a subsolution (the second half below),  if the gradient vanishes the normalized $p$-Laplacian is defined as
\begin{align*}
\limsup_{0\neq \eta\to 0}& F(\eta, D^2 \phi(x,t))\\
&=\limsup_{0\neq \eta\to 0} \left(\Delta \phi(x,t)+(p-2)   \left\langle D^{2}\phi(x,t)\frac{\eta}{\abs{\eta}},\frac{\eta}{\abs{\eta}} \right\rangle\right)\\
&=\Delta \phi(x,t)+(p-2)\lambda_{\max}(D^{2}\phi(x,t)),  
\end{align*}
where $\lambda_{\max}(D^{2}\phi(x,t))$ is the largest eigenvalue of $D^{2}\phi(x,t)$. Touching by test functions is defined in a similar way as in the elliptic case but now with respect to the variables  $(x,t)\in \Om_T=\Om\times (0,T)$:  $\phi$ touches a function $u$ from below at $(x,t)\in \Om_T$, if 
\[
\begin{split}
\phi(x,t)=u(x,t)\quad \text{and}\quad  u(y,s)>\phi(y,s) \text{ when }(y,s)\neq (x,t).
\end{split}
\]

\begin{definition}[Parabolic viscosity solution]
\label{eq:viscosity-sol-par}
A function $u\in C(\Om_T)$ is a viscosity solution to the normalized $p$-parabolic equation 
\begin{align*}
(n+p)u_t=\Delta_p^N u
\end{align*}
if 
whenever $\phi\in C^{2}(\Om_T)$ touches $u$ at $(x,t)\in \Om_T$ from  below it holds that
\[
\begin{split}
\begin{cases}
 (n+p)\phi_t(x,t)\ge \Delta_p^N \phi(x,t),& \text{ if }D\phi(x,t)\neq 0,\\
 (n+p)\phi_t(x,t)\ge (p-2)\lambda_{\min}(D^2\phi(x,t))+\Delta \phi(x,t)&  \text{ if }D\phi(x,t)=0,
\end{cases}
\end{split}
\] 
and whenever $\phi\in C^{2}(\Om_T)$ touches $u$ at $(x,t)\in \Om_T$ from  above it holds that
\[
\begin{split}
\begin{cases}
 (n+p)\phi_t(x,t)\le \Delta_p^N \phi(x,t),& \text{ if }D\phi(x,t)\neq 0,\\
 (n+p)\phi_t(x,t)\le (p-2)\lambda_{\max}(D^2\phi(x,t))+\Delta \phi(x,t)&  \text{ if }D\phi(x,t)=0.
\end{cases}
\end{split}
\] 
\end{definition}

Here are some further remarks:
\begin{itemize}
\item The equation is now in nondivergence form without an immediate divergence form counterpart.
\item There are again several equivalent ways of writing down the definition of a viscosity solution: Instead of touching at $(x,t)$, one can require local $\min/\max$ of $u-\phi$ at $(x,t)$. We could also use the definition in terms of parabolic semi jets:  $(a,p,X)\in {\mathcal{P}}^{2,+} u(x,t)$ if
\[
\begin{split}
u(x+h,t+s)\le& u(x,t)+\langle  p, h \rangle\\&+\half \langle Xh,h \rangle+a\,s+o(\abs{s}+\abs{h}^2). 
\end{split}
\]
This is because we only use the ingredients of the second order Taylor expansion in the definition of a viscosity solution.
The definitions of ${\mathcal{P}}^{2,-}u(x,t)$, $\ol {\mathcal{P}}^{2,-}u(x,t)$ and $\ol {\mathcal{P}}^{2,+}u(x,t)$ can be found for example in \cite[Section 3.2.1]{giga06}.
\item Future does not affect: When touching $u$ from below at $(x,t)$ (or similarly from above), it is enough to require that $\phi(x,t)=u(x,t)$ and $\phi\le u$ in $(\Om\times (0,T))\cap \{ s \,:\,s<t \}$ as long as the comparison holds for the operator, see \cite{juutinen01}.  
\end{itemize}

The normalized $p$-parabolic equation has a uniqueness result that we stated as a part of Theorem~\ref{thm:sol-to-p-parabolic}. 
\begin{theorem}
Let $g\in C(\partial_p \Om_T)$ and $u \in C(\ol \Om_T)$. If $u$ is a viscosity solution to the normalized $p$-parabolic equation such that $u=g$ on $\partial_p \Om$, then $u$ is unique. 
\end{theorem}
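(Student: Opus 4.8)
The plan is to derive uniqueness from a comparison principle: if $u$ is a subsolution and $v$ a supersolution of $(n+p)w_t=\Delta_p^N w$ in $\Om\times(0,T]$ with $u\le v$ on $\partial_p\Om_T$, then $u\le v$ on all of $\ol\Om_T$. Granting this, two solutions sharing the boundary data $g$ are each simultaneously sub- and supersolutions, so they bound one another from both sides and must coincide. Throughout I write $F^*(\eta,X)=\tr X+(p-2)\langle X\tfrac{\eta}{\abs{\eta}},\tfrac{\eta}{\abs{\eta}}\rangle=F_*(\eta,X)$ for $\eta\neq0$, while $F^*(0,X)=\tr X+(p-2)\lambda_{\max}(X)$ and $F_*(0,X)=\tr X+(p-2)\lambda_{\min}(X)$, so that the two branches of Definition~\ref{eq:viscosity-sol-par} read $(n+p)\phi_t\le F^*(D\phi,D^2\phi)$ for a subsolution and $(n+p)\phi_t\ge F_*(D\phi,D^2\phi)$ for a supersolution. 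To prove comparison I argue by contradiction, assuming $\sup_{\ol\Om_T}(u-v)=:M>0$, and first perturb by replacing $u$ with $\hat u(x,t):=u(x,t)-\tfrac{\eta}{T-t}$ for small $\eta>0$. This keeps $\sup(\hat u-v)>0$ while forcing $\hat u\to-\infty$ as $t\to T$; since also $\hat u-v\le0$ on $\partial_p\Om_T$, any maximum of $\hat u-v$ is attained at an interior point with $t<T$.

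Next I carry out the doubling of variables. For $\delta,\sigma>0$ consider
\[
\Phi(x,y,t,s)=\hat u(x,t)-v(y,s)-\frac{1}{4\delta}\abs{x-y}^4-\frac{1}{2\sigma}(t-s)^2
\]
and let $(x_\delta,y_\delta,t_\delta,s_\delta)$ be a maximizer over $\ol\Om_T\times\ol\Om_T$. The usual penalization estimates show that as $\delta,\sigma\to0$ the penalty terms tend to $0$, $\abs{x_\delta-y_\delta}\to0$ and $\abs{t_\delta-s_\delta}\to0$, and the maximizers converge to an interior maximum of $\hat u-v$; hence for small parameters all four points lie in $\Om\times(0,T]$. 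The decisive feature is the use of the fourth power $\abs{x-y}^4$ rather than $\abs{x-y}^2$: its spatial gradient $\zeta_\delta:=\tfrac1\delta\abs{x_\delta-y_\delta}^2(x_\delta-y_\delta)$ and its Hessian both vanish exactly on the diagonal $x_\delta=y_\delta$, which is precisely what will let us cope with the singularity of $\Delta_p^N$ at a vanishing gradient.

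I then invoke the parabolic theorem on sums (the Jensen--Ishii lemma, as in \cite{crandallil92,giga06}) to produce $a,b\in\R$ and symmetric matrices $X,Y$ with $(a,\zeta_\delta,X)\in\ol{\mathcal P}^{2,+}\hat u(x_\delta,t_\delta)$ and $(b,\zeta_\delta,Y)\in\ol{\mathcal P}^{2,-}v(y_\delta,s_\delta)$, where the symmetric time penalty yields $a=b$ and the spatial penalty $A=D^2\bigl(\tfrac1{4\delta}\abs{x-y}^4\bigr)$ yields
\[
\begin{pmatrix} X & 0 \\ 0 & -Y \end{pmatrix}\le A+\varepsilon A^2 .
\]
Testing this against vectors $(\xi,\xi)$ and using that $A$ annihilates the diagonal subspace gives $X\le Y$; moreover when $x_\delta=y_\delta$ one has $A=0$, so in fact $X\le0\le Y$. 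Translating back through $\hat u=u-\tfrac{\eta}{T-t}$, so that $u$'s time slope is $a+\tfrac{\eta}{(T-t_\delta)^2}$, the subsolution property of $u$ gives $(n+p)\bigl(a+\tfrac{\eta}{(T-t_\delta)^2}\bigr)\le F^*(\zeta_\delta,X)$ and the supersolution property of $v$ gives $(n+p)b\ge F_*(\zeta_\delta,Y)$. Subtracting and using $a=b$,
\[
(n+p)\,\frac{\eta}{(T-t_\delta)^2}\le F^*(\zeta_\delta,X)-F_*(\zeta_\delta,Y).
\]

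It remains to see that the right-hand side is nonpositive. If $\zeta_\delta\neq0$, then $F^*=F_*=F$ is the continuous normalized operator and the difference equals $\tr(X-Y)+(p-2)\langle(X-Y)e,e\rangle$ with $e=\zeta_\delta/\abs{\zeta_\delta}$, which is $\le0$ because $X\le Y$ and $p\ge2$. If $\zeta_\delta=0$, then the difference equals $\tr(X-Y)+(p-2)\bigl(\lambda_{\max}(X)-\lambda_{\min}(Y)\bigr)$, which is again $\le0$ since $X\le0\le Y$ forces $\tr(X-Y)\le0$ and $\lambda_{\max}(X)\le0\le\lambda_{\min}(Y)$. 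In both cases the right-hand side is $\le0$ while the left-hand side is strictly positive, a contradiction. This establishes the comparison principle, and hence uniqueness. I expect the main obstacle to be exactly the singularity of $\Delta_p^N$ at $D\phi=0$, together with the need to coordinate the second-order information of the sub- and supersolution: both are resolved by the fourth-power penalization, which degenerates to zero Hessian on the diagonal and thereby delivers the ordering $X\le0\le Y$ needed to run the zero-gradient branch of the definition.
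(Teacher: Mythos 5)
Your proof is correct and follows essentially the same route as the paper: a strictness perturbation by $\eta/(T-t)$, doubling of variables with the quartic penalty $\abs{x-y}^4$, and the parabolic theorem on sums, with the quartic power chosen precisely so that the penalty's gradient and Hessian vanish on the diagonal. The only cosmetic difference is that the paper splits off the diagonal case $x_j=y_j$ and tests directly with the smooth penalty there, whereas you apply the theorem on sums uniformly and invoke the semicontinuous envelopes $F^*,F_*$ at zero gradient to get $X\le 0\le Y$ --- both are valid.
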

%%%%%%%%%%
%%%%%%%%%%
\begin{proof}
Observe that in the parabolic case we may immediately assume that $v$ is a strict supersolution i.e. 
\[
\begin{split}
(n+p)v_t-\Delta_p^N v>0
\end{split}
\]
in the viscosity sense by considering instead $v+\frac{\eta}{T-t}$, and that $v(x,t)\to \infty$ as $t\to T$.
The proof is by contradiction: We assume that
$u$ and $v$ are viscosity solutions with the same boundary values and yet inside the domain
\[
\begin{split}
u(x_0,t_0)-v(x_0,t_0)=\sup(u-v)>0.
\end{split}
\]
Let
\[
\begin{split}
w_j(x,t,y,s)=u(x,t)-\Big(v(y,s)+\frac{j}{4}\abs{x-y}^4+\frac{j}{2}(t-s)^2\Big)
\end{split}
\]
and denote by $(x_j,t_j,y_j,s_j)$ the maximum point of $w_j$ in
$\ol \Om_T\times \ol \Om_T$. Since $(x_0,t_0)$ is a local maximum
for $u-v$, we may assume that
\[
\begin{split}
(x_j,t_j,y_j,s_j)\to (x_0,t_0,x_0,t_0) \quad \textrm{as} \quad j\to \infty
\end{split}
\]
and $(x_j,t_j)\, ,(y_j,s_j)\in \Om_T$ for all large $j$; we only consider such indices.

We consider two cases: either $x_j=y_j$ infinitely often or $x_j\neq y_j$ for all $j$ large enough. First, let  $x_j=y_j$, and denote
\[
\begin{split}
\phi(y,s)=-\frac{j}{4}\abs{x_j-y}^4-\frac{j}{2}(t_j-s)^2.
\end{split}
\]
Then
\[
\begin{split}
v(y,s)-\phi(y,s)
\end{split}
\]
has a local minimum at $(y_j,s_j)$. Since $v$ is a strict supersolution, we have
\[
\begin{split}
(n+p)\phi_t(y_j,s_j)  > (p-2) \lambda_\textrm{min}(D^2\phi(y_j,s_j))+\Delta \phi(y_j,s_j).
\end{split}
\]
Since  $y_j=x_j$ and thus $D^2\phi(y_j,s_j) =0$, we have by the previous
inequality
\begin{equation}
\label{eq:from-above}
\begin{split}
0&<(n+p)j(t_j-s_j).
\end{split}
\end{equation}

Next redefine the notation
\[
\begin{split}
\phi(x,t)=\frac{j}{4}\abs{x-y_j}^4+\frac{j}{2}(t-s_j)^2.
\end{split}
\]
Similarly,
\[
\begin{split}
u(x,t)-\phi(x,t)
\end{split}
\]
has a local maximum at $(x_j,t_j)$, and thus since $D^2\phi(x_j,t_j)=0$, our assumptions imply
\begin{equation}
\label{eq:from-below}
\begin{split}
0&=\Delta \phi(x_j,t_j)+(p-2)\lambda_{\max}(D^2\phi(x_j,t_j))\ge (p+n)\phi_t(x_j,t_j)\\
&=(p+n)j(t_j-s_j).
\end{split}
\end{equation}
Summing up \eqref{eq:from-above} and \eqref{eq:from-below}, we get
\[
\begin{split}
0<(n+p)j(t_j-s_j)-(n+p)j(t_j-s_j)=0,
\end{split}
\]
a contradiction.

Next we consider the case $y_j\neq x_j$. We use the theorem on sums (also called Ishii's lemma or maximum principle for semicontinuous functions, see for example \cite[Lemma 3.6]{koike04}, or \cite[Theorem 3.3.2]{giga06}), for $w_j$
 which implies that there exist symmetric matrices $X_j,Y_j$ such that $X_j-Y_j \le 0$\footnote{If $w(x,y,t,s)=u(x,t)-v(y,s)$ were smooth, then at a maximum point $(x,t,y,s)$ we have $D^2_{xx} u(x,t)-D^2_{yy} v(y,s)\le 0$. Analogously, $X_j-Y_j \le 0$ by theorem on sums without the smoothness assumption.} and
\[
\begin{split}
&\Big(j(t_j-s_j),\,j\abs{x_j-y_j}^2(x_j-y_j),\,X_j\Big)\in  \ol{\mathcal{P}}^{2,+}u(x_j,t_j)\\
&\Big(j(t_j-s_j),\,j\abs{x_j-y_j}^2(x_j-y_j),\,Y_j\Big)\in \ol{\mathcal{P}}^{2,-}v(y_j,s_j).
\end{split}
\]
Since $v$ is a strict supersolution and $u$ a (sub)solution, we get by alternative definitions in terms of jets 
\[
\begin{split}
0&< (n+p)j(t_j-s_j)- (p-2)\left\langle Y_j \frac{(x_j-y_j)}{\abs{x_j-y_j}},\, \frac{(x_j-y_j)}{\abs{x_j-y_j}}\right\rangle-\tr(Y_j) \\
&\hspace{1 em}-(n+p)j(t_j-s_j)+(p-2)\left\langle X_j  \frac{(x_j-y_j)}{\abs{x_j-y_j}},\,\frac{(x_j-y_j)}{\abs{x_j-y_j}}\right\rangle+\tr(X_j)\\
&\le (p-2)\left\langle (X_j-Y_j)  \frac{(x_j-y_j)}{\abs{x_j-y_j}},\,\frac{(x_j-y_j)}{\abs{x_j-y_j}}\right\rangle+\tr(X_j-Y_j)\le 0,
\end{split}
\]
where the last inequality follows since $X_j-Y_j$ is negative semidefinite. This provides the desired contradiction.
\end{proof}
%%%%%%%%%%

As we already stated, in the elliptic case, we could omit the test functions with $D\phi(x)=0$. In the parabolic case, we cannot omit them but the test function reduction is still possible using the similar technique as above in the uniqueness proof.  For the proof, see \cite[Lemma 2.2]{manfredipr10c}. 
\begin{theorem}
\label{thm:test-functions-neglect}
In the case $D\phi(x,t)=0$, we can further restrict the class of test functions to $D\phi(x,t)=0,D^2\phi(x,t)=0$. 
In other words, the above definition can be rewritten as
\[
\begin{split}
\begin{cases}
 (n+p)\phi_t(x,t)-\Delta_p^N \phi(x,t) \ge 0,& \text{ if }D\phi(x,t)\neq 0,\\
 (n+p)\phi_t(x,t)\ge 0&  \text{ if }D\phi(x,t)=0 \text{ and } D^2\phi(x,t)=0
\end{cases}
\end{split}
\] 
when touching from below, and analogously from above.
\end{theorem}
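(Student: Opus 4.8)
The plan is to prove that the two formulations of the definition coincide. Since the reduced class of test functions is strictly smaller, a full-sense solution is automatically a reduced-sense solution, and all the content lies in the converse. I would treat the supersolution case (touching from below), the subsolution case being entirely dual. So assume $u$ satisfies the reduced conditions and let $\phi$ touch $u$ from below at $(x_0,t_0)$ with $D\phi(x_0,t_0)=0$; the goal is the full inequality $(n+p)\phi_t(x_0,t_0)\ge (p-2)\lambda_{\min}(D^2\phi(x_0,t_0))+\Delta\phi(x_0,t_0)$. As usual I may assume the touching is strict and $(x_0,t_0)$ interior, and localize so that the doubled maxima below are attained in a fixed neighborhood.

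The main tool is the doubling-of-variables machinery already used in the uniqueness proof, but now pairing the unknown $u$ against the smooth test function $\phi$. I would maximize
\[
w_j(x,t,y,s)=\phi(y,s)-u(x,t)-\frac{j}{4}\abs{x-y}^4-\frac{j}{2}(t-s)^2
\]
over $\ol\Om_T\times\ol\Om_T$, obtaining maximizers $(x_j,t_j,y_j,s_j)\to(x_0,t_0,x_0,t_0)$ with $j\abs{x_j-y_j}^4\to0$ and $j(t_j-s_j)^2\to0$. The quartic penalization is chosen precisely so that its gradient and Hessian vanish on the diagonal $x=y$. The theorem on sums then produces a subjet $(a_j,\xi_j,X_j)\in\ol{\mathcal P}^{2,-}u(x_j,t_j)$ with $\xi_j=j\abs{x_j-y_j}^2(x_j-y_j)=D\phi(y_j,s_j)$, $a_j=\phi_s(y_j,s_j)$, and — this is the crucial sign — the matrix inequality $X_j\ge D^2\phi(y_j,s_j)$, whose orientation is opposite to the subsolution computation and is exactly what lets the estimate close.

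I would then split according to the dichotomy: $x_j=y_j$ infinitely often, versus $x_j\neq y_j$ for all large $j$. When $x_j\neq y_j$ the gradient $\xi_j$ is nonzero, so the non-singular clause of the reduced definition applies to the subjet, giving $(n+p)a_j\ge (p-2)\langle X_j\nu_j,\nu_j\rangle+\tr X_j$ with $\nu_j=\xi_j/\abs{\xi_j}$; combining $X_j\ge D^2\phi(y_j,s_j)$ with the Rayleigh bound $\langle M\nu,\nu\rangle\ge\lambda_{\min}(M)$, monotonicity of the trace, and $p-2\ge0$, the right-hand side is bounded below by $(p-2)\lambda_{\min}(D^2\phi(y_j,s_j))+\Delta\phi(y_j,s_j)$, and letting $j\to\infty$ yields the claim. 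When instead $x_j=y_j$, the penalization is flat to second order there, so $u$ is touched from below at $(x_j,t_j)$ by a test function with vanishing gradient and vanishing Hessian; the reduced flat condition gives $(n+p)a_j\ge0$, while maximization in the $(y,s)$ slot forces $D^2\phi(y_j,s_j)\le0$, whence in the limit $(n+p)\phi_t(x_0,t_0)\ge0\ge (p-2)\lambda_{\min}(D^2\phi(x_0,t_0))+\Delta\phi(x_0,t_0)$.

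I expect the main obstacle to be the bookkeeping in the theorem on sums: securing the correct orientation of the matrix inequality ($X_j\ge D^2\phi$ rather than $\le$, which is precisely what separates the supersolution from the subsolution case), matching the time slopes $a_j=\phi_s(y_j,s_j)$, and verifying that the quartic penalty genuinely collapses the second-order jet on the diagonal so that the flat reduced condition is applicable in the case $x_j=y_j$. Once these are in place the two estimates close almost mechanically, since $p-2\ge0$ makes the eigenvalue bound point in the right direction; the analogous argument from above, using $\ol{\mathcal P}^{2,+}u$, $X_j\le D^2\phi$, $\lambda_{\max}$, and the reversed inequalities, handles the subsolution half.
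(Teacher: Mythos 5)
Your argument is correct and takes essentially the same route as the paper's own (commented-out) proof: both double variables between $u$ and $\phi$ with the penalization $\tfrac{j}{4}|x-y|^4+\tfrac{j}{2}(t-s)^2$, split into the case $x_j=y_j$ (where the penalty itself serves as the flat test function and maximization in $(y,s)$ gives $D^2\phi(y_j,s_j)\le 0$) and the case $x_j\neq y_j$ (theorem on sums with the correctly oriented matrix inequality), the only difference being that the paper argues by contradiction from a strict failure of the full inequality while you pass to the limit directly. The sign in your $\xi_j=j|x_j-y_j|^2(x_j-y_j)$ versus $D\phi(y_j,s_j)=j|x_j-y_j|^2(y_j-x_j)$ is off, but this is immaterial since only $\nu_j\otimes\nu_j$ enters the operator.
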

%%%%%%%%%%
\hide{\begin{proof}
The proof is by contradiction: We assume that
$u$ satisfies the conditions in the statement but still fails to be a
viscosity solution in the sense of
Definition~\ref{eq:viscosity-sol-par}. If this is the case, we
must have  $\phi \in C^2(\Om_T)$ and $(x_0,t_0)\in \Om_T$ such
that
\begin{enumerate}
\item[i)]  $u(x_0, t_0) = \phi(x_0, t_0)$,
\item[ii)] $u(x, t) > \phi(x, t)$ for $(x, t) \in \Om_T,\ (x,t)\neq (x_0,t_0)$,
\end{enumerate}
for which $D \phi(x_0,t_0)=0,\ D^2\phi(x_0,t_0)\neq 0$ and
\begin{equation}
\label{eq:counter-proposition}
\begin{split}
(n+p)\phi_t(x_0,t_0)  <  \lambda_\textrm{min}((p-2)D^2\phi(x_0,t_0))+\Delta \phi(x_0,t_0),
\end{split}
\end{equation}
or the analogous inequality when testing from above (in this case
the argument is symmetric and we omit it). Let
\[
\begin{split}
w_j(x,t,y,s)=u(x,t)-\Big(\phi(y,s)-\frac{j}{4}\abs{x-y}^4-\frac{j}{2}\abs{t-s}^2\Big)
\end{split}
\]
and denote by $(x_j,t_j,y_j,s_j)$ the minimum point of $w_j$ in
$\ol \Om_T\times \ol \Om_T$. Since $(x_0,t_0)$ is a local minimum
for $u-\phi$, we may assume that
\[
\begin{split}
(x_j,t_j,y_j,s_j)\to (x_0,t_0,x_0,t_0),\quad \textrm{as} \quad j\to \infty
\end{split}
\]
and $(x_j,t_j)\, ,(y_j,s_j)\in \Om_T$ for all large $j$, as usual in the viscosity theory.

We consider two cases: either $x_j=y_j$ infinitely often or $x_j\neq y_j$ for all $j$ large enough. First, let  $x_j=y_j$, and denote
\[
\begin{split}
\vp(y,s)=\frac{j}{4}\abs{x_j-y}^4+\frac{j}{2}(t_j-s)^2.
\end{split}
\]
Then
\[
\begin{split}
\phi(y,s)-\vp(y,s),
\end{split}
\]
has a local maximum at $(y_j,s_j)$. By \eqref{eq:counter-proposition} and continuity of
\[
\begin{split}
(x,t)\mapsto \lambda_{\textrm{min}}((p-2)D^2\phi(x,t))+\Delta\phi(x,t),
\end{split}
\] we have
\[
\begin{split}
(n+p)\phi_t(y_j,s_j)  <  \lambda_\textrm{min}((p-2)D^2\phi(y_j,s_j))+\Delta \phi(y_j,s_j)
\end{split}
\]
for $j$ large enough. As $\phi_t(y_j,s_j)=\vp_t(y_j,s_j)$ and $D^2
\phi(y_j,s_j)\leq D^2 \vp(y_j,s_j)$, we have by the previous
inequality
\begin{equation}
\label{eq:from-above}
\begin{split}
0&<-(n+p)\vp_t(y_j,s_j)+\lambda_{\textrm{min}}((p-2)D^2\vp(y_j,s_j))+\Delta\vp(y_j,s_j)\\
&=-(n+p)j(t_j-s_j),
\end{split}
\end{equation}
where we also used the fact that $y_j=x_j$ and thus $D^2\vp(y_j,s_j) =0$.

Next denote
\[
\begin{split}
\psi(x,t)=-\frac{j}{4}\abs{x-y_j}^4-\frac{j}{2}(t-s_j)^2.
\end{split}
\]
Similarly,
\[
\begin{split}
u(x,t)-\psi(x,t)
\end{split}
\]
has a local minimum at $(x_j,t_j)$, and thus since $D^2\psi(x_j,t_j)=0$, our assumptions imply
\begin{equation}
\label{eq:from-below}
\begin{split}
0\leq (p+n)\psi_t(x_j,t_j)=(p+n)j(t_j-s_j),
\end{split}
\end{equation}
for $j$ large enough. Summing up \eqref{eq:from-above} and \eqref{eq:from-below}, we get
\[
\begin{split}
0<-(n+p)j(t_j-s_j)+(p+n)j(t_j-s_j)=0,
\end{split}
\]
a contradiction.

Next we consider the case $y_j\neq x_j$. We also use the parabolic theorem of sums for $w_j$
 which implies that there exists symmetric matrices $X_j,Y_j$ such that $X_j-Y_j$ is positive semidefinite and
\[
\begin{split}
&\Big(j(t_j-s_j),\,j\abs{x_j-y_j}^2(x_j-y_j),\,Y_j\Big)\in  \ol{\mathcal{P}}^{2,+}\phi(y_j,s_j)\\
&\Big(j(t_j-s_j),\,j\abs{x_j-y_j}^2(x_j-y_j),\,X_j\Big)\in \ol{\mathcal{P}}^{2,-}u(x_j,t_j).
\end{split}
\]
Using \eqref{eq:counter-proposition} and the assumptions on $u$, we get
\[
\begin{split}
0&=(n+p)j(t_j-s_j)-(n+p)j(t_j-s_j)\\
&< (p-2)\langle Y_j \frac{(x_j-y_j)}{\abs{x_j-y_j}},\, \frac{(x_j-y_j)}{\abs{x_j-y_j}}\rangle+\tr(Y_j)\\
& \hspace{1 em} -(p-2)\langle X_j  \frac{(x_j-y_j)}{\abs{x_j-y_j}},\,\frac{(x_j-y_j)}{\abs{x_j-y_j}}\rangle-\tr(X_j)\\
&=(p-2)\langle (Y_j-X_j)  \frac{(x_j-y_j)}{\abs{x_j-y_j}},\, \frac{(x_j-y_j)}{\abs{x_j-y_j}}\rangle+\tr(Y_j-X_j)\\
&\leq 0,
\end{split}
\]
because $Y_j-X_j$ is negative semidefinite. 
This provides the desired contradiction.
\end{proof}
}

%%%%%%%%%%

Both in the elliptic and parabolic case the existence for the Dirichlet boundary value problem can be proven through the Perron method or approximation procedure. This requires suitable regularity assumptions on the boundary. 
Interestingly, the games give an alternative proof for the existence since we will later see that game value functions converge to the viscosity solution. 

For more on the viscosity theory of parabolic equations, see for example \cite{chengg91, giga06, juutinenk06} and \cite{imberts13}. In \cite{does11}, in addition to other results,  the game theoretic $p$-parabolic equation was applied to image processing. 
The game theoretic interpretation has partly inspired  attention on the game theoretic $p$-parabolic equation from the PDE perspective. Banerjee and Garofalo studied the equation from the potential theoretic point of view for example in \cite{banerjeeg13, banerjeeg15}. Jin and Silvestre \cite{jins17} established $C^{1,\gamma}$-regularity. Later, H{\o}eg and Lindqvist \cite{hoegl20}, as well as  Dong, Peng, Zhang and Zhou \cite{dongpzz20} studied higher order Sobolev regularity for the game theoretic $p$-parabolic equation. The theory has partly been extended to a more general class of equations that contains both the game theoretic and standard $p$-parabolic equations for example in \cite{ohnumas97,imbertjs19} and \cite{parviainenv20}.

\section{Stochastic tug-of-war games}

\subsection{Time independent values and the $p$-Laplace equation}
\label{sec:time-independent}

Next we start looking at the connection between stochastic processes and PDEs more closely. Examples \ref{ex:1d-random} and \ref{ex:tgw-p2} in the introduction suggested that there is a connection between a random walk  and harmonic functions.  Here we consider a random walk where when at $x$, the next point is chosen according to the uniform probability distribution on $B_{\eps}(x)\subset \Om\subset \Rn$ and the value is simply the expectation
\begin{align*}
u_{\eps}(x_0):=\mathbb E^{x_0}[F(x_{\tau})],
\end{align*}
where $F$ is a given boundary payoff, $x_{\tau}$ is the first point outside the game domain $\Om\subset \Rn$, $\tau$ the stopping time giving the corresponding round, and $x_0\in \Om$ is a given starting point. 

For simplicity, take $\Om=B_1(0)$ and let the boundary payoff be
$$
F(x)=x_1 \text{ for } x=(x_1,x_2,\ldots,x_n).
$$ 
The harmonic function  with  $\Delta u=0$ and boundary values $F$ is  
$$
u(x)=x_1.
$$
This as well as harmonic functions in general satisfy the mean value principle 
$$
u(x)=\kint_{B_{\eps}(0)} u(x+h) \ud h.
$$

Let us denote by $\E^{x_0}[u(x_k)| x_0,\ldots,x_{k-1}]$ the conditional expectation of $u(x_k)$ knowing the previous positions $x_0,\ldots,x_{k-1}$.
Now $$M_j:=u(x_j)$$ is a martingale under the random walk since 
\begin{align*}
\E^{x_0}[u(x_k)| x_0,\ldots,x_{k-1}]=\kint_{B_{\eps}(0)} u(x_{k-1}+h) \ud h=u(x_{k-1}),
\end{align*}
and this is exactly as in the definition of a martingale. 
By the  optional stopping theorem $\E[M_0]=\E[M_\tau]$ (see for example \cite[Theorem A.31]{lewicka20}) and thus
\begin{align*}
u(x_0)&=\E^{x_0}[u(x_0)]=\E^{x_0}[M_0]=\E^{x_0}[M_\tau]=\E^{x_0}[u(x_\tau)]=\E^{x_0}[F(x_{\tau})]\\
&=u_{\eps}(x_0).
\end{align*}
In other words, the expectation of the random walk coincides with the corresponding harmonic function.

Similarly, if $\Delta u\ge 0$ i.e.\ $u$ is a subsolution, then
$M_j:=u(x_j)$ is a submartingale  under the random walk since 
\begin{align*}
\E^{x_0}[u(x_k)|x_0,\ldots,x_{k-1}]=\kint_{B_{\eps}(0)} u(x_{k-1}+h) \ud h\ge u(x_{k-1}).
\end{align*}

Finally, if $\Delta u\le 0$ i.e.\ $u$ is a supersolution, then
$M_j:=u(x_j)$ is a supermartingale under the random walk since 
\begin{align*}
\E^{x_0}[u(x_k)|x_0,\ldots,x_{k-1}]=\kint_{B_{\eps}(0)} u(x_{k-1}+h) \ud h\le u(x_{k-1}).
\end{align*}

\idea A martingale is expected to remain the same over  one step:
\begin{align*}
\E^{x_0}[u(x_k)| x_0,\ldots,x_{k-1}]=u(x_{k-1}).
\end{align*}
A submartingale is expected to increase over  one step:
\begin{align*}
\E^{x_0}[u(x_k)| x_0,\ldots,x_{k-1}]\ge u(x_{k-1}).
\end{align*}
A supermartingale is expected to decrease over one step:
\begin{align*}
\E^{x_0}[u(x_k)| x_0,\ldots,x_{k-1}]\le u(x_{k-1}).
\end{align*}
By the  optional stopping theorem, we can iterate the known one step behavior of a super-/sub-/martingale over several rounds and even get a stopping time there under suitable assumptions.

 \subsubsection{Dynamic programming principle}

Earlier we conjectured that $p$-harmonic functions are related to the value functions of the tug-of-war with noise and to solutions
of the dynamic programming principle (DPP)
\begin{equation}
\label{eq:DPP-repeat}
\begin{split}
u_{\eps}(x) =&\frac{\a}{2}\left\{ \sup_{B_{\eps}(x)} u_{\eps} + \inf_{B_{\eps}(x)}u_{\eps}\right\}+ \beta
\kint_{B_{\eps}(0)} u_{\eps}(x+h) \ud h
\end{split}
\end{equation} 
with 
\begin{align*}
u_{\eps}=F\ \text{on}\ \Gamma_\eps:=\{x\in \R^n \setminus \Om\,:\,\dist(x,\partial \Om) \leq \eps\},
\end{align*}
and
\[
\begin{split}
\a=\frac{p-2}{p+n},\quad \beta=1-\a=\frac{2+n}{p+n}.
\end{split}
\]
Actually, the functions satisfying the DPP can be taken as a starting point. Such functions are sometimes called $p$-harmonious (not harmonic) functions,  adapting terminology of harmonious extensions \cite{legruyera98} in the case $p=\infty$.  In the classical case $p=2$, related extensions were considered for example by Courant, Friedrichs and Lewy, see also \cite{doyles84}. 

We are going to adopt an approach where we start from the DPP, as it allows us to work out a rather elementary analytic proof. The approach is from \cite{luirops14}.

\idea\begin{itemize}
\item First we are going to show that there exists  a function satisfying the DPP with given boundary values, that is we show the existence of a $p$-harmonious function.
\item Then we will use this $p$-harmonious function to choose 'good' strategies, and show that both players can guarantee the amount given by the $p$-harmonious function. In other words it is the unique value of the game.
\end{itemize}

For simplicity we assume $0<\alpha, \beta<1$ since the endpoints $0$ or $1$ require a different treatment. This corresponds to $2<p<\infty$.
\begin{theorem}
\label{th:dpp}
Given a bounded Borel boundary function
$F:\Gamma_\eps\to \R$, there is a bounded Borel function $u$ that satisfies
the DPP with the boundary values $F$.
\end{theorem}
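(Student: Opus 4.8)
The plan is to recast the DPP as a fixed-point equation for the operator
\[
Tu(x)=\frac{\a}{2}\Big(\sup_{B_{\eps}(x)}u+\inf_{B_{\eps}(x)}u\Big)+\beta\kint_{B_{\eps}(x)}u(y)\ud y,\qquad x\in\Om,
\]
together with $Tu=F$ on $\Gamma_\eps$, acting on the set $\mathcal{X}$ of bounded Borel functions that equal $F$ on $\Gamma_\eps$. First I would record the structural properties that drive everything. The operator is \emph{monotone} ($u\le v$ implies $Tu\le Tv$, since all three coefficients are nonnegative) and it \emph{commutes with additive constants} ($T(u+c)=Tu+c$, because $\tfrac{\a}{2}+\tfrac{\a}{2}+\beta=1$); in particular it is nonexpansive in the supremum norm. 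Moreover, for each $x$ the value $Tu(x)$ is a convex combination (with weights $\tfrac{\a}{2},\tfrac{\a}{2},\beta$) of values of $u$ on $B_{\eps}(x)$, so writing $m:=\inf F$ and $M:=\sup F$, the operator maps $[m,M]$-valued functions to $[m,M]$-valued functions. Finally one checks that $Tu$ is again Borel: the average is continuous in $x$, while $x\mapsto\sup_{B_{\eps}(x)}u$ is lower semicontinuous and $x\mapsto\inf_{B_{\eps}(x)}u$ upper semicontinuous, hence both Borel.

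Next I would run a monotone iteration. Take $u_0\equiv M$ on $\Om$ and $u_0=F$ on $\Gamma_\eps$; since $Tu_0$ is a convex combination of values $\le M$ on $\Om$ and equals $F$ on $\Gamma_\eps$, we have $Tu_0\le u_0$, and monotonicity yields a pointwise decreasing sequence $u_{k+1}=Tu_k\le u_k$ bounded below by $m$. Thus $u_k\downarrow u$ for some bounded function $u$, which is Borel as a countable pointwise limit of Borel functions and satisfies $u=F$ on $\Gamma_\eps$. It then remains to pass to the limit in $u_{k+1}=Tu_k$. The averaging term converges by dominated convergence, and the infimum term converges correctly because infima commute with decreasing limits, $\inf_{B_{\eps}(x)}u=\inf_k\inf_{B_{\eps}(x)}u_k$.

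The remaining point is the supremum term, and this is the genuine obstacle. A priori one only has $\lim_k\sup_{B_{\eps}(x)}u_k\ge\sup_{B_{\eps}(x)}u$, so passing to the limit immediately shows that $u$ is a \emph{supersolution}, $Tu\le u$, but not yet a solution; the gap is exactly the failure of $\sup$ to commute with the monotone limit, which is possible precisely because $T$ is only nonexpansive rather than a contraction. I would resolve it by a Dini-type argument: working with the closed (hence compact) balls $\ol{B}_{\eps}(x)$ and establishing that the iterates are upper semicontinuous, the standard fact that the maximum of a decreasing sequence of upper semicontinuous functions over a compact set converges to the maximum of the limit gives $\sup_{\ol{B}_{\eps}(x)}u_k\to\sup_{\ol{B}_{\eps}(x)}u$, forcing $Tu=u$.

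Since upper semicontinuity of the iterates can fail for a merely Borel $F$, I expect this Dini step to be the hard part, and I would either reduce to upper semicontinuous boundary data or replace the last step by a Perron-type construction. For the latter, set $w=\inf\{v\in\mathcal{X}:m\le v\le M,\ Tv\le v\}$; the class is nonempty (it contains the iterate $u_0$ above), and monotonicity alone shows $w$ is a supersolution. If $w$ failed to be a subsolution, so that $Tw(x_0)<w(x_0)$ at some $x_0\in\Om$, then lowering $w$ to the value $Tw(x_0)$ at the single point $x_0$ produces, again by monotonicity, a strictly smaller admissible supersolution, contradicting the minimality of $w$; hence $Tw=w$. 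The one subtlety to watch in this route is measurability, since an infimum over an uncountable family of Borel functions need not be Borel, and I would control this by exhibiting $w$ through a countable determining subfamily.
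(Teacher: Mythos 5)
Your setup (the operator $T$, monotonicity, the $[m,M]$ bounds, preservation of Borel measurability, the monotone iteration) matches the paper's, and you correctly locate the real difficulty: one of the two extremal terms does not commute with a merely pointwise monotone limit. (The paper iterates upward from $\inf_{\Gamma_\eps}F$, so for it the problematic term is the infimum rather than the supremum; the situation is symmetric to yours.) But neither of your two proposed resolutions closes the gap, and you say so yourself. The Dini route requires upper semicontinuity of the iterates over closed balls, which fails for merely Borel data; worse, replacing open balls by closed balls is not an innocent move here, since the paper points out (citing \cite[Example 2.4]{luirops14}) that $x\mapsto\sup_{\ol B_\eps(x)}u$ need not be Borel for bounded Borel $u$ — open balls are used precisely to keep the iterates measurable. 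The Perron route is structurally sound (the one-point-lowering argument does produce an admissible competitor), but it stands or falls on the Borel measurability of an infimum over an uncountable family, and "a countable determining subfamily" is exactly the statement you would have to prove; as written this is a placeholder, not an argument.

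The ingredient you are missing is the paper's actual mechanism: the monotone convergence $u_k\to u$ is in fact \emph{uniform}, and uniform convergence lets both $\sup_{B_\eps(x)}$ and $\inf_{B_\eps(x)}$ pass to the limit with no semicontinuity or compactness input. The uniform convergence is proved by contradiction: if $M:=\lim_j\sup_\Om(u-u_j)>0$, one picks indices $k<\ell$ and a point $x_0$ with $u_{\ell+1}(x_0)-u_{k+1}(x_0)\ge M-2\delta$, applies the DPP at $x_0$ to both iterates, uses
\[
\sup_A u_\ell-\sup_A u_k\le\sup_A(u_\ell-u_k),\qquad \inf_A u_\ell-\inf_A u_k\le\sup_A(u_\ell-u_k),
\]
and dominated convergence for the average term, to arrive at $M-2\delta\le\alpha(M+\delta)+\delta$ — a contradiction for small $\delta$ because $\alpha<1$. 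Note that $\alpha<1$ (equivalently $\beta>0$, the presence of noise) is used essentially here and appears nowhere in your proposal; that absence is a good diagnostic that the key estimate is missing. Your decreasing iteration from $u_0\equiv M$ would work just as well as the paper's increasing one once this uniform-convergence lemma is in place, so I would recommend replacing the Dini/Perron endgame with that argument.
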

%%%%%%%%%%%%%%%%%%%%%%%%%%%
\begin{proof}
As illustrated in Figure~\ref{fig:increasing}, the proof is based on the monotone iteration giving a fixed point for the operator 
\[
\begin{split}
Tv(x):=\begin{cases}
 \frac{\a}{2}\left\{ \sup_{B_{\eps}(x)} v + \inf_{B_{\eps}(x)}v\right\}+ \beta
\kint_{B_{\eps}(0)} v(x+h) \ud h, & x\in \Om\\
 F(x),&x\in \Gamma_\eps. 
\end{cases}
\end{split}
\]
\begin{figure}[h]
\includegraphics[scale=0.4, ,trim = 2em 23ex 2em 23ex]{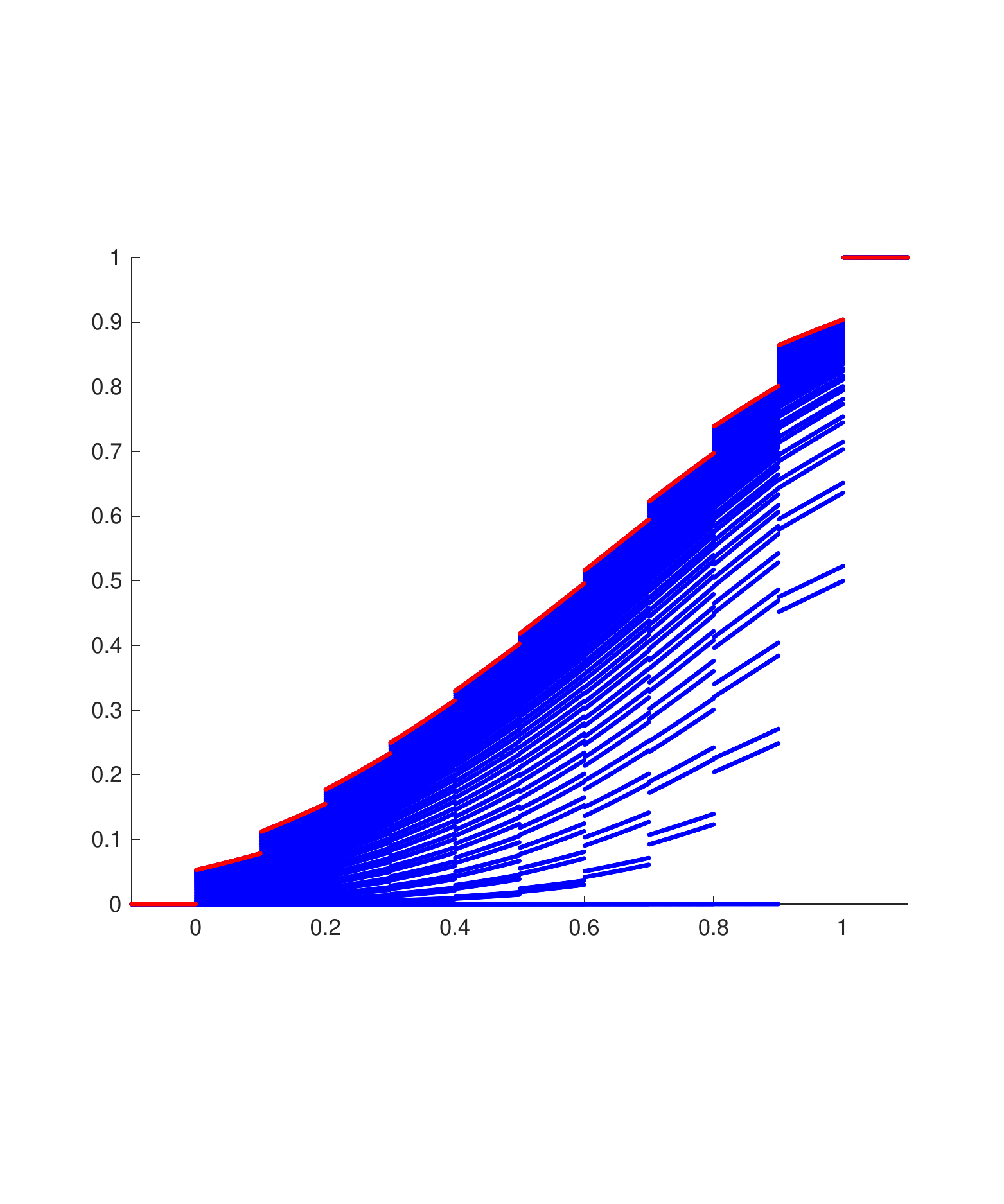}
\caption{One dimensional illustration of the increasing sequence of functions generated in the proof of Theorem \ref{th:dpp}.}
\label{fig:increasing}
\end{figure}

\noindent {\bf Step 1 (pointwise limit):}  We look at the limit
$$
u=\lim_{j\to\infty} u_j, \quad {\rm with}\;\; u_{j+1}=Tu_j\;\;{\rm for}\;\; j=0,1,\ldots,
$$
where 
$$
u_0(x)=\begin{cases} \inf_{y\in\Gamma_\eps} F,& x\in \Omega \\
F(x), &x\in \Gamma_{\eps}.
\end{cases}
$$
Then $u_1\geq u_0$ and further since $T$ preserves the order of functions 
$$
u_2=Tu_1\geq Tu_0=u_1.
$$ 
Similarly, 
\[
\begin{split}
u_{j+1}\geq u_j
\end{split}
\]
 for all $j=0,1,\ldots$. Hence the sequence $u_j$ is increasing and since it is bounded from above by $\sup_{y\in \Gamma_\eps}F(y)<\infty$, we may define the bounded  Borel function $u$ as the monotone pointwise limit
$$
u(x):=\lim_{j\to\infty}u_j(x).
$$
\noindent {\bf Step 2 (uniform convergence):} Pointwise convergence does not guarantee that the limit will satisfy the DPP, and therefore we will show that the convergence is uniform. Suppose contrary to our claim that
\begin{equation}\label{eqM1}
M:=\lim_{j\to\infty}\sup_{x\in\Omega}(u-u_j)(x)>0.
\end{equation}
Fix arbitrary $\delta >0$ and select $k\geq 1$ large enough so that
\begin{equation}\nonumber%\label{eqM2}
u-u_k\leq M+\delta\quad {\rm in}\;\; \Omega.
\end{equation}
By \eqref{eqM1} we may choose $x_0\in \Omega$ with the property 
 $$u(x_0)-u_{k+1}(x_0)\geq M-\delta.
 $$ Then choose $\ell>k$ large enough so that $u(x_0)-u_{\ell+1}(x_0)<\delta$, whence it follows that
\begin{equation}\nonumber%\label{eqM4}
u_{\ell+1}(x_0)-u_{k+1}(x_0)\geq M-2\delta.
\end{equation}
By the dominated convergence theorem,  we may also assume that 
\begin{equation}\nonumber%\label{eqM3}
\sup_{x\in\Omega}\beta\intav_{B_\eps(x)}(u-u_k)(y)\,dy\,\,\leq\delta\,.
\end{equation}

It also holds for any set $A$ that
\begin{equation}
\label{eq:supremums}
\begin{split}
\sup_{A}u_{\ell}-\sup_{A}u_k&\leq \sup_{A}(u_\ell-u_k),\\
\inf_{A}u_{\ell}-\inf_{A}u_k&\leq \sup_{A}(u_\ell-u_k).
\end{split}
\end{equation}
The iterative definition and monotonicity of the sequence $u_j$ together with the above estimates  imply
\begin{eqnarray*}
M-2\delta &\leq &u_{\ell+1}(x_0)-u_{k+1}(x_0)\\
&=&\frac{\alpha}{2}\sup_{B_\eps(x_0)}u_{\ell}+\frac{\alpha}{2}\inf_{B_\eps(x_0)}u_{\ell}+\beta\intav_{B_\eps(x_0)}u_\ell\\
&&\phantom{kkkkkkk}\,\,-\bigg(\frac{\alpha}{2}\sup_{B_\eps(x_0)}u_{k}+\frac{\alpha}{2}\inf_{B_\eps(x_0)}u_{k}+\beta\intav_{B_\eps(x_0)}u_k\bigg)\\
&\leq & \alpha\sup_{B_\eps(x_0)}(u_{\ell}-u_{k})+\beta\intav_{B_\eps(x_0)}(u_\ell-u_k)\\
&\leq & \alpha\sup_{B_\eps(x_0)}(u-u_{k})+\beta\intav_{B_\eps(x_0)}(u-u_k)\\
&\leq &\alpha (M+\delta)+\delta.
\end{eqnarray*}
Since $\alpha<1$, this provides the contradiction if $\delta$ is chosen small enough.

By the uniform convergence, the limit $u$ obviously satisfies the DPP and it has the right boundary values by construction.
\end{proof}

The choice of open balls in the definition of the game and in the DPP is important: it guarantees by writing for example the level set 
$\{ x \,:\, \sup_{B_{\eps}(x)} v>\lambda\}$ in the form 
$
\cup_{y: v(y)>\lambda} B_{\eps}(y)
$
that the functions in the iteration remain Borel.
 One can give an example, \cite[Example 2.4]{luirops14}, of a bounded Borel function such that the function 
 \begin{align*}
x\mapsto 
\sup_{y\in \overline{B}_1(x)}u(y),
\end{align*} where $\overline{B}_1(x)$ is a closed ball, is not Borel. Such a simple operation does not necessarily preserve measurability!

The uniqueness of the $p$-harmonious function with a given boundary data can also be shown by a direct analytic proof \cite[Theorem 2.2]{luirops14}, but we omit the details since this follows later from the fact that the game has a value.

\subsubsection{Existence of a value function}
\label{sec:elliptic-value}

We next verify that the solution to the DPP obtained in the previous section indeed equals the value function of the game. The rules of the tug-of-war with noise can be recalled from Section \ref{sec:tgw}. 

Value of the game for Player I is given by
\[
u^\eps_\I(x_0)=\sup_{S_{\I}}\inf_{S_{\II}}\,\mathbb{E}_{S_{\I},S_{\II}}^{x_0}[F(x_\tau)]
\]
while the value of the game for Player II is given by
\[
u^\eps_\II(x_0)=\inf_{S_{\II}}\sup_{S_{\I}}\,\mathbb{E}_{S_{\I},S_{\II}}^{x_0}[F(x_\tau)],
\]
where $x_0$ is a starting point of the game and $S_\I,S_{\II}$ are the strategies employed by the players. 

 \idea In the value for Player I, Player I takes the supremum over all her strategies of the infimum taken over all Player II's strategies of the expected payoff. In other words, Player I will have to choose first and Player II chooses his strategy only after Player I's strategy is already fixed. In the value for Player II it is the opposite. Thus it always holds that 
\begin{align*}
u^\eps_\I\le u^\eps_\II,
\end{align*}
also cf.\ for example \cite[Lemma 2.6.3]{karlinp17}.
If $u^\eps_\I= u^\eps_\II$, which will be the case here,  it is said that the game has a value. The following proof is from \cite[Theorem 3.2]{luirops13}.
\begin{theorem}[Value of the game]\label{th:value}
It holds that $u=u^\eps_\I = u^\eps_\II$, where $u$ is a solution of the DPP
as in Theorem \ref{th:dpp}. In other words, the game has a unique value. 
\end{theorem}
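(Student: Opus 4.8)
The plan is to prove the two-sided bound $u^\eps_\II \le u \le u^\eps_\I$, where $u$ is the DPP solution furnished by Theorem~\ref{th:dpp}. Combined with the trivial inequality $u^\eps_\I \le u^\eps_\II$ (sup-inf never exceeds inf-sup), this forces $u = u^\eps_\I = u^\eps_\II$ and hence exhibits $u$ as the unique value. The guiding principle is to use $u$ itself as a ``potential'': along a game trajectory, the values $u(x_k)$ form an approximate submartingale once Player~I commits to the greedy almost-maximizing strategy, and an approximate supermartingale once Player~II commits to the greedy almost-minimizing strategy, the discrepancy being exactly absorbed by the DPP.

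\textbf{The bound $u \le u^\eps_\I$.} Fix $\delta>0$ and let Player~I play the strategy $S_\I^0$ that, whenever she wins the toss at round $k$ from the current position $x_k$, moves to a point $x_{k+1}\in B_\eps(x_k)$ with $u(x_{k+1})\ge \sup_{B_\eps(x_k)}u-\delta 2^{-(k+1)}$ (such an almost-maximizer exists). For an arbitrary strategy $S_\II$, conditioning on the history and splitting into the three outcomes (Player~I wins with probability $\a/2$, Player~II wins with probability $\a/2$, a random step with probability $\beta$) gives, since Player~II can do no better for himself than reach $\inf_{B_\eps(x_k)}u$,
\begin{align*}
\mathbb{E}_{S_\I^0,S_\II}^{x_0}[u(x_{k+1})\mid x_0,\dots,x_k]
&\ge \frac{\a}{2}\Big(\sup_{B_\eps(x_k)}u-\delta 2^{-(k+1)}\Big)+\frac{\a}{2}\inf_{B_\eps(x_k)}u+\beta\kint_{B_\eps(x_k)}u\\
&= u(x_k)-\frac{\a\delta}{2}2^{-(k+1)},
\end{align*}
where the last equality is precisely the DPP. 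Hence $M_k:=u(x_k)+\frac{\a\delta}{2}\sum_{i=1}^{k}2^{-i}$ is a bounded submartingale. Applying optional sampling at the bounded times $\tau\wedge k$ and passing to the limit (bounded convergence, using $\tau<\infty$ almost surely), together with $u=F$ on $\Gamma_\eps$, yields $u(x_0)=M_0\le \mathbb{E}_{S_\I^0,S_\II}^{x_0}[F(x_\tau)]+\delta$. Taking the infimum over $S_\II$ and then the supremum over $S_\I$ gives $u(x_0)\le u^\eps_\I(x_0)+\delta$, and letting $\delta\to 0$ proves $u\le u^\eps_\I$.

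\textbf{The bound $u^\eps_\II \le u$ and conclusion.} Symmetrically, let Player~II play the greedy almost-minimizing strategy $S_\II^0$, moving to $x_{k+1}$ with $u(x_{k+1})\le \inf_{B_\eps(x_k)}u+\delta 2^{-(k+1)}$ when he wins. The same computation, now producing a bounded supermartingale, gives $u(x_0)\ge \mathbb{E}_{S_\I,S_\II^0}^{x_0}[F(x_\tau)]-\delta$ for every $S_\I$; taking the supremum over $S_\I$ and then the infimum over $S_\II$ yields $u^\eps_\II(x_0)\le u(x_0)+\delta$, hence $u^\eps_\II\le u$. Chaining the inequalities, $u^\eps_\II \le u \le u^\eps_\I \le u^\eps_\II$, so $u=u^\eps_\I=u^\eps_\II$.

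\textbf{Main obstacle.} The delicate step is the justification of optional stopping for the unbounded stopping time $\tau$. One must know that $\tau<\infty$ almost surely with sufficiently fast tail decay: this is guaranteed because $\beta>0$ forces a positive probability of exiting over a fixed number of consecutive random steps, giving the exponential bound already noted in Section~\ref{sec:tgw}. With $u$ bounded and the correction terms summing to at most $\delta$, sampling at $\tau\wedge k$ and invoking bounded convergence then legitimately transfers the (sub/super)martingale inequality to $\tau$. A secondary technical point is the measurability of the greedy strategies and of the maps $x\mapsto \sup_{B_\eps(x)}u$, $x\mapsto \inf_{B_\eps(x)}u$; this is handled exactly as in the Borel-measurability discussion following Theorem~\ref{th:dpp}, relying on the use of open balls.
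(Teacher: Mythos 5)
Your proof is correct and follows essentially the same route as the paper: fix a greedy almost-optimal strategy for one player, use the DPP to show that $u(x_k)$ plus a geometric correction term is a bounded sub-/supermartingale, and conclude via optional stopping together with $u^\eps_\I\le u^\eps_\II$. The only cosmetic difference is that you carry out both one-sided bounds explicitly where the paper proves $u^\eps_\II\le u$ and invokes symmetry for the other direction.
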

\begin{proof}
It is enough to show that 
\begin{equation}\label{eq:II} 
u_\II^\eps \leq u
\end{equation}
because by symmetry $u_\I^\eps \geq u$ and always
$u_\II^\eps\geq   u_\I^\eps$.

To prove \eqref{eq:II} we play the game as follows:
Player II follows a strategy
$S_\II^0$ such that at $x_{k-1}\in \Om$ he chooses to step to a
point that almost minimizes $u$, that is, to a point $x_k \in 
B_\eps(x_{k-1})$ such that
\[
u(x_k)\leq\inf_{B_\eps(x_{k-1})} u+\eta 2^{-k}
\]
for some fixed $\eta>0$. Moreover, this strategy can be chosen to be Borel by using Lusin's countable selection theorem even if we  omit the details here.

Then for any strategy $S_\I$, we  use the  definition of the game, definition of $S_\II^0$,  and the fact that $u$ satisfies the DPP to estimate
\[
\begin{split}
\mathbb{E}_{S_\I, S^0_\II}^{x_0}&[u(x_k)+\eta 2^{-k}\,|x_0,\ldots,x_{k-1}]
\\
&=\frac{\alpha}{2} \left\{ u \big(S^0_\II(x_0,\ldots ,x_{k-1})\big)+ u\big(S_\I(x_0,\ldots ,x_{k-1})\big)\right\}\\
&\phantom{aaaaaaaaaaaaa}+ \beta\kint_{ B_\eps(x_{k-1})}
u \ud y+\eta 2^{-k}\\
&\leq \frac{\alpha}{2} \big\{\inf_{ B_\eps(x_{k-1})} u+\eta 2^{-k}+\sup_{  B_\eps(x_{k-1})}
u\big\}+ \beta\kint_{ B_\eps(x_{k-1})}
u \ud y+\eta 2^{-k}\\
&=u(x_{k-1})+\eta 2^{-k}(1+\frac{\alpha}{2})\\
&\leq u(x_{k-1})+\eta 2^{-(k-1)}.
\end{split}
\]
Above $\mathbb{E}_{S_\I, S^0_\II}^{x_0}[\ldots |x_0,\ldots,x_{k-1}]$ denotes the conditional expectation conditioned on the history of the game, and we estimated the outcome of the action of Player I by $\sup$. 
%Here I would not omit eta since one needs to make it smaller every turn.

Thus, regardless of the strategy $S_\I$ the process $
M_k=u(x_k)+\eta 2^{-k}
$ is a supermartingale with respect to the history of the game. Recall that $F(x_\tau)=u(x_\tau)$ where $\tau$ is the usual exit time and observe that the supermartingale is bounded. 
We deduce by the optional stopping theorem that
\[
\begin{split}
u_\II^\eps(x_0)&= \inf_{S_{\II}}\sup_{S_{\I}}\,\mathbb{E}_{S_{\I},S_{\II}}^{x_0}[F(x_\tau)]\le \sup_{S_\I} \mathbb{E}_{S_\I,
S^0_\II}^{x_0}[F(x_\tau)+\eta 2^{-\tau}]\\ 
&\leq\sup_{S_\I}  \mathbb{E}^{x_0}_{S_\I, S^0_\II}[u (x_0)+\eta ]=u(x_0)+\eta.
\end{split}
\]
Since $\eta$ was arbitrary this proves the claim.
\end{proof}

The previous result justifies the notation $u_{\eps}:=u^\eps_\I=u^\eps_\II$.

\subsubsection{Convergence of value functions to the $p$-harmonic function}
\label{sec:convergence-p} 

We stated earlier in Theorem \ref{thm:sol-to-p-Laplace}  that value functions to the tug-of-war with noise converge to the $p$-harmonic function as the step size $\eps\to 0$. We repeat the formulation taking into account what we discussed on viscosity solutions earlier. 
\begin{theorem}
\label{thm:sol-viscosity}
Let $\Om$ be a smooth domain, $F:\Rn\setminus \Om \to \R$ a smooth function, and $u$ the unique viscosity solution to the problem
\[
\begin{split}
\begin{cases}
\Delta_p^N u=0 & \text{ in }\Om, \\
 u=F& \text{ on }\partial \Om. 
\end{cases}
\end{split}
\]
Further, let $u_{\eps}$ be the value function for the tug-of-war with noise with the boundary payoff function $F$. Then 
\[
\begin{split}
u_{\eps} \to u \text{ uniformly on } \ol \Om,
\end{split}
\]
as $\eps\to 0$.
\end{theorem}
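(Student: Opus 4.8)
The plan is to run the standard compactness scheme for value functions: (i) show that $\{u_\eps\}$ is uniformly bounded and asymptotically equicontinuous up to $\partial\Om$; (ii) extract a uniformly convergent subsequence whose limit $v$ lies in $C(\ol\Om)$ and equals $F$ on $\partial\Om$; (iii) identify $v$ as a viscosity solution of $\Delta_p^N v=0$ using the DPP \eqref{eq:DPP-repeat} together with the asymptotic expansion \eqref{eq:asymp-exp}; and (iv) invoke the uniqueness in Theorem~\ref{thm:uniqueness} to conclude that $v=u$ and hence that the whole family converges.

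The uniform bound $\inf_{\Gamma_\eps}F\le u_\eps\le \sup_{\Gamma_\eps}F$ is immediate from the construction in Theorem~\ref{th:dpp}, since the operator $T$ preserves these barriers. The substantive step, and the one I expect to be the main obstacle, is \emph{asymptotic uniform continuity}: for each $\eta>0$ there should exist $r_0>0$ and $\eps_0>0$ with $\abs{u_\eps(x)-u_\eps(y)}<\eta$ whenever $\abs{x-y}<r_0$ and $\eps<\eps_0$. I would split this into a boundary estimate and an interior estimate. For the boundary estimate I would use that $\Om$ is smooth to place an exterior ball at each $z\in\partial\Om$ and build a barrier (equivalently, bound the probability and the expected number of rounds before a token started near $z$ exits far from $z$), yielding $\abs{u_\eps(x)-F(z)}\le \omega(\abs{x-z})+C\eps$ for a modulus $\omega$ coming from the smoothness of $F$ and $\partial\Om$. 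The interior estimate I would obtain by a coupling argument: run two games started at nearby points $x,y$ under a mirror/reflection coupling of the random and tug steps, arranged so that the two tokens either meet or hit $\Gamma_\eps$ simultaneously, and then control $\abs{u_\eps(x)-u_\eps(y)}$ by the boundary modulus plus a vanishing error. Combining the two gives the claimed equicontinuity.

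Granting this, an Arzel\`{a}--Ascoli argument adapted to the asymptotic modulus produces a subsequence $u_{\eps_j}\to v$ uniformly on $\ol\Om$ with $v\in C(\ol\Om)$ and $v=F$ on $\partial\Om$. To see that $v$ is a viscosity solution, let $\phi\in C^2$ touch $v$ from below at $x_0\in\Om$ with $D\phi(x_0)\neq 0$. Uniform convergence yields points $x_{\eps_j}\to x_0$ at which $u_{\eps_j}-\phi$ attains a minimum; writing $c_{\eps_j}=(u_{\eps_j}-\phi)(x_{\eps_j})$, we have $\phi+c_{\eps_j}\le u_{\eps_j}$ with equality at $x_{\eps_j}$, and inserting this into the DPP \eqref{eq:DPP-repeat} and cancelling $c_{\eps_j}$ (using $\a+\beta=1$) gives
\[
\phi(x_{\eps_j})\ge \frac{\a}{2}\Big\{\sup_{B_{\eps_j}(x_{\eps_j})}\phi+\inf_{B_{\eps_j}(x_{\eps_j})}\phi\Big\}+\beta\kint_{B_{\eps_j}(0)}\phi(x_{\eps_j}+h)\,\ud h .
\]
Since $D\phi(x_0)\neq 0$, the expansion \eqref{eq:asymp-exp} rewrites the right-hand side as $\phi(x_{\eps_j})+\frac{\beta\eps_j^2}{2(n+2)}\Delta_p^N\phi(x_{\eps_j})+o(\eps_j^2)$; dividing by $\eps_j^2$ and letting $j\to\infty$ yields $\Delta_p^N\phi(x_0)\le 0$. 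The symmetric argument for touching from above gives the reverse inequality, so $v$ is a viscosity solution in the sense of Definition~\ref{eq:def-normalized-visc}.

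Finally, $v$ is a viscosity solution with $v=F$ on $\partial\Om$, so by Theorem~\ref{thm:uniqueness} we have $v=u$. As every subsequential uniform limit of $\{u_\eps\}$ must then equal the same function $u$, the full family converges, $u_\eps\to u$ uniformly on $\ol\Om$. The crux of the whole argument is step (i), and within it the interior coupling estimate: controlling the oscillation of $u_\eps$ independently of $\eps$ is precisely where the nonlinearity and the absence of an a priori modulus make the problem delicate.
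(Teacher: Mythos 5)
Your architecture is exactly the paper's: uniform boundedness, asymptotic equicontinuity up to the boundary, an Arzel\`a--Ascoli variant tolerating $\eps$-scale jumps, stability of the DPP under uniform convergence, and finally uniqueness (Theorem~\ref{thm:uniqueness}) to upgrade subsequential to full convergence. Two remarks on where your route deviates in the details. First, for the interior oscillation estimate you propose a mirror/reflection coupling of two game instances; the paper's featured method is instead the cancellation-strategy argument of Section~\ref{sec:Lip-regularity} (via the cylinder walk), with couplings of DPPs \cite{luirop18} and Krylov--Safonov-type arguments listed as alternatives in Section~\ref{sec:other-methods}, and the global strategy-copying argument of \cite{peress08,manfredipr12} as yet another option. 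Your choice is legitimate, but note that a naive mirror coupling of the \emph{tug} steps does not obviously close (the players' moves are adversarial, not i.i.d.), which is precisely why the paper resorts to cancellation strategies there; so this step would need the full force of one of those cited arguments rather than a one-line coupling. Second, in the stability step you invoke \eqref{eq:asymp-exp} to rewrite the right-hand side of the DPP as $\phi+\tfrac{\beta\eps^2}{2(n+2)}\Delta_p^N\phi+o(\eps^2)$. In the paper that identity is only a heuristic with an unquantified Error term (the footnote there points out that the rigorous proof is organized differently). The paper's actual device is the one-sided inequality \eqref{eq:asymp-p-lap}: a second-order Taylor expansion at the discrete minimum point $x_1^\eps$ and its antipode bounds $\tfrac12\{\max_{\ol B_\eps}\phi+\min_{\ol B_\eps}\phi\}-\phi$ from below by $\tfrac12\langle D^2\phi\,(x_1^\eps-x),(x_1^\eps-x)\rangle+o(\eps^2)$, and then $(x_1^\eps-x_\eps)/\eps\to -D\phi(x_0)/\abs{D\phi(x_0)}$ (using $D\phi(x_0)\neq 0$) recovers $\Delta_\infty^N\phi(x_0)$ in the limit. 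Fortunately the half of the expansion you actually need for the supersolution inequality is exactly this elementary lower bound, so your argument is repaired by substituting the min-point computation for the appeal to \eqref{eq:asymp-exp}; the analogous max-point bound handles the subsolution half. A last cosmetic point the paper also flags: since $u_{\eps_j}$ may be discontinuous, the minimum of $u_{\eps_j}-\phi$ need not be attained and one should work with $\eta_{\eps_j}$-approximate minima.
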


\noindent {\sc Steps:}
\begin{itemize}
\item We need a local {\bf regularity} estimate for $u_{\eps}$, which is independent of $\eps$ in a suitable sense, and uniform {\bf boundedness} for $u_{\eps}$. 
\item  We need a regularity estimate at the vicinity of the boundary. 
\item By the first steps, the conditions of the  {\bf Arzel\`a-Ascoli} theorem (or actually a version that allows small jumps \cite[Lemma 4.2]{manfredipr12}) are fulfilled, and we may pass to $\eps\to 0$ (possibly passing to a subsequence) to obtain a continuous limit $u$ such that 
\[
\begin{split}
u_{\eps}\to u
\end{split}
\]
uniformly in $\ol \Om$  as $\eps\to 0$.
\item Then using the DPP, we establish that the limit $u$ is a viscosity solution to $\Delta_p^N u=0$. This solution is also unique as stated in Theorem~\ref{thm:uniqueness}, and thus the limit is independent of the chosen subsequence.
\end{itemize}
To see that the conditions of the Arzel\`a-Ascoli theorem hold, we need the following:
\begin{itemize}
\item Boundedness: Since the value was defined as an expectation over the boundary payoff, which was bounded, also the value functions are uniformly bounded.
\item Regularity at the vicinity of the boundary: we may pull towards a fixed boundary point. Recalling smoothness assumptions,  and using a boundary barrier (cf.\ boundary barriers in the theory of PDEs) given by the underlying PDE, one can deduce the desired regularity estimate. Further details are omitted, but can be found from \cite{peress08,manfredipr12}.
\item Full regularity: One can use 
\begin{itemize}
\item Global approach: copying of strategies and translation invariance allows to 'follow' another game instance to the vicinity of the boundary \cite[Theorem 1.2(i)]{peress08}. Alternative version of the proof uses a comparison principle \cite[Lemma 4.4]{manfredipr12}.
\item Local approach: use cancellation strategies to prove local Lipschitz regularity as in \cite{luirops13} (Section \ref{sec:Lip-regularity}).
\item Further local approaches: couplings of DPPs \cite{luirop18} (Section \ref{sec:other-methods}), or Krylov-Safonov type methods \cite{arroyobp,arroyobp2} (Section \ref{sec:other-methods}).
\end{itemize}
\end{itemize}

There is also an alternative approach \cite{barless91} directly using the comparison principle for the limit equation (if available) that we do not consider here.

\noindent {\bf Verifying that the limit is a viscosity solution:} We postpone the regularity considerations, and instead assume that we have obtained a continuous limit $u$ as sketched above. We now establish that it is a viscosity solution to $\Delta_p^N u=0$ (last step above). This is akin to the stability argument for viscosity solutions under uniform convergence. 

Choose a point $x\in \Omega$ and a $C^2$-function  $\phi$ defined
in a neighborhood of $x$. Let 
\[
\phi(x_1^\varepsilon)=\min_{y\in
\ol B_{\eps}(x)}\phi(y). 
\]
Evaluating the Taylor expansion of
$\phi$ at $x$ with $x_1^\eps$, we get
\[
\phi(x_1^{\varepsilon})=\phi(x) + \langle D\phi(x), (x_1^{\varepsilon}
- x) \rangle  +\frac12 \langle
D^2\phi(x)(x_1^{\varepsilon}-x),(x_1^{\varepsilon}-x)\rangle+o(\varepsilon^2),
\]
where $o(\eps^2)/\eps^2\to 0$ as $\eps\to 0$.
Moreover, using the opposite point $\tilde{x}_1^{\varepsilon}=x +x-x_1^\eps$ for which $\tilde{x}_1^{\varepsilon}-x=-(x_1^\eps-x)$, we also get
\[
\phi(\tilde{x}_1^{\varepsilon})=\phi(x) - \langle D\phi(x)
, (x_1^{\varepsilon} - x) \rangle  +\frac12 \langle
D^2\phi(x)(x_1^{\varepsilon}-x),(x_1^{\varepsilon}-x)\rangle+o(\varepsilon^2).
\]
Adding the expressions, we obtain
\[
\phi(\tilde{x}_1^{\varepsilon}) + \phi(x_1^{\varepsilon})- 2
\phi(x) = \langle
D^2\phi(x)(x_1^{\varepsilon}-x),(x_1^{\varepsilon}-x)\rangle+o(\varepsilon^2).
\]
Since $x_1^{\varepsilon}$ is the point where the minimum of $\phi$ is
attained, it follows that
\[
\phi(\tilde{x}_1^{\varepsilon}) + \phi(x_1^{\varepsilon})- 2
\phi(x) \le \max_{y\in \ol B_{\eps}(x)} \phi (y) +
\min_{y\in \ol B_{\eps}(x)} \phi (y) - 2 \phi(x),
\]
and thus combining the previous observations
\begin{equation}
\label{eq:taylor-min-max}
\begin{split}
\half\Bigg\{\max_{y\in \ol B_{\eps}(x)} \phi (y) &+
\min_{y\in \ol B_{\eps}(x)} \phi (y)\Bigg\} - \phi(x)
\\&
\geq\half \langle
D^2\phi(x)(x_1^{\varepsilon}-x),(x_1^{\varepsilon}-x)\rangle+o(\varepsilon^2).
\end{split}
\end{equation}
The computation leading to (\ref{exp.lapla}) gives
\begin{equation}
\label{eq:taylor-mean} \kint_{B_\varepsilon (x)} \phi
(y)
\ud y-\phi(x) =\frac{\eps^2}{2(n+2)}\Delta
\phi(x)+o(\varepsilon^2).
\end{equation}
 Then we multiply \eqref{eq:taylor-min-max} by $\alpha$ and
\eqref{eq:taylor-mean} by $\beta$ respectively, and add the formulas in the same way as we did in the sketch in the introduction.
We arrive at  the  expansion valid for any smooth function $\phi$:
\begin{equation}
\label{eq:asymp-p-lap}
\begin{split}
&\frac{\alpha}{2}\left\{\max_{y\in \ol B_{\eps}(x)}
\phi (y) + \min_{y\in \ol B_{\eps}(x)} \phi
(y)\right\}+ \beta\kint_{B_\varepsilon (x)} \phi (y)
\ud y -
\phi(x)\\
&\ge \frac{\beta\eps^2}{2(n+2)}\left((p-2)\Big\langle
D^2\phi(x)\left(\frac{x_1^{\varepsilon}-x}{\varepsilon}\right),
\left(\frac{x_1^{\varepsilon}-x}{\varepsilon}\right)\Big\rangle
+
\Delta \phi(x)\right)\\
& \quad +o(\varepsilon^2).
\end{split}
\end{equation}

Suppose that $
\phi$ touches the limit $u$ at $x$ from below
and that $D \phi(x)\neq 0$. Recall that it is enough to test with such
functions. By the uniform convergence, there exists a sequence
$x_{\eps} $ converging to $x$ such that $u_{\eps} - \phi $
has a minimum at $x_{\eps}$ (see \cite[Section 10.1.1]{evans10}), that is, there exists $x_{\eps}$ such that
$$
u_{\eps} (y) - \phi (y) \geq u_{\eps} (x_{\eps}) - \phi
(x_{\eps})
$$
at the vicinity of $x_{\eps}$.
As a matter of fact we omitted for simplicity an arbitrary constant $\eta_{\eps}>0$ that should be included due to the fact that $u_{\eps}$ may be discontinuous and we might not attain the minimum.
Moreover, by adding a constant, we may assume that $\phi
(x_{\eps}) = u_{\eps} (x_{\eps})$.
 Thus, by recalling the fact that $u_\eps$ is $p$-harmonious, we obtain
\[
\begin{split}
 0 \geq-\phi (x_{\eps})&+
\frac{\alpha}{2} \left\{ \max_{\ol B_{\eps} (x_{{\eps}})} \phi
+ \min_{\ol B_{\eps} (x_{{\eps}})} \phi
\right\}+ \beta
\kint_{B_{\eps} (x_{{\eps}})} \phi (y)\ud y.
\end{split}
\]
Combining this with \eqref{eq:asymp-p-lap} redefining the notation $x_1^\varepsilon$ as
\begin{align*}
\phi(x_1^\varepsilon)=\min_{y\in
\ol B_{\eps}(x_{\eps})}\phi(y),
\end{align*} 
we get
\begin{align}
\label{eq:RHS-expansion}
o&(\eps^2)\\
&\ge \frac{\beta{\eps}^2}{2(n+2)}\left((p-2)\Big\langle
D^2\phi(x_{\eps})\left(\frac{x_1^{{\eps}}-x_{{\eps}}}{{\eps}}\right),
\left(\frac{x_1^{{\eps}}-x_{{\eps}}}{{\eps}}\right)\Big\rangle
+
\Delta \phi(x_{{\eps}})\right).\nonumber
\end{align}
Since $D\phi(x)\not=0$, it follows from Taylor's expansion\footnote{Apart from terms that converge to zero we have $\frac{\phi(x_1^\varepsilon)-\phi(x_\varepsilon)}{\eps}\approx \langle D\phi(x_\varepsilon),  \frac{x_1^\varepsilon-x_\varepsilon}{\eps}\rangle$, where $x_1^\varepsilon$ is the minimum point. This implies the desired convergence.} using regularity of $\phi$ that
\begin{align*}
%\label{eq:limit}
 \lim_{\varepsilon \to
0}\frac{x_1^\varepsilon-x_\varepsilon}{\varepsilon} = -\frac{D
\phi(x)}{|D \phi(x)|} .
\end{align*}
Dividing (\ref{eq:RHS-expansion}) by $\eps^2$ and letting $\eps \to 0$, we end up with
\[
\begin{split}
0 &\geq
  \frac{\beta}{2(n+2)}\left((p-2) \Delta^N_{\infty}\phi(x)
+\Delta \phi (x)\right).
\end{split}
\]
Therefore $u$ is a viscosity supersolution. The proof for the subsolution property is similar, and thus we have that $u$ is a viscosity solution.

\subsection{Time dependent values and the normalized $p$-parabolic equation}
\label{sec:parab-convergence}

This section is based on \cite{manfredipr10c} and \cite{luirops14}.
Let
\begin{align*}
\Om_\infty=\Om\times (0,\infty)
\end{align*}
 denote a space-time cylinder. To prescribe boundary values, we
denote the parabolic boundary strip by
\[
\begin{split}
\Gamma_p^\eps= \Big(\Gamma_{\eps}\times (-\frac{\eps^2}{2},\infty) \Big)\cup
\Big(\Om\times (-\frac{\eps^2}{2},0] \Big),
\end{split}
\]
where $ \Gamma_\eps:=\{x\in \RR^n \setminus \Om\,:\,\dist(x, \Om) \leq \eps\}.$
Boundary values are given by $F: \Gamma_p^\eps\to \R$ which is a bounded Borel function. 

\subsubsection{Dynamic programming principle and value functions}

As we suggested before, the normalized $p$-parabolic functions
\begin{align*}
(n+p) u_t=\Delta_p^N u
\end{align*}
 are related to the game values of the time tracking tug-of-war with noise. We also suggested that the game values $u_{\eps}$ satisfy the parabolic dynamic programming principle (DPP) with
boundary values $F$ 
\[
\begin{split}
u_\eps(x,t)&= \frac{\alpha}{2}
\left\{ \sup_{y\in  B_\eps (x)} u_\eps \Big(y,t-\frac{\eps^2}{2}\Big) +
\inf_{y\in  B_\eps (x)} u_\eps\Big(y,t-\frac{\eps^2}{2}\Big) \right\}
\\
&\hspace{1 em}+\beta \kint_{B_\eps (x)} u_\eps\Big(y,t-\frac{\eps^2}{2}\Big)
\ud y 
\end{split}
\]
for every $(x, t) \in \Omega_\infty$, and 
\[
\begin{split}
u_\eps(x,t) &= F(x,t), \qquad \textrm{for every}\quad (x,t) \in
\Gamma_p^\eps.
\end{split}
\]
Moreover, recall that $2\le p<\infty$ and
\begin{align*}
\alpha=\frac{p-2}{p+n}, \qquad
\beta=\frac{2+n}{p+n}.
\end{align*}

The proof is simpler than in the elliptic case, since finite iteration is sufficient. All the functions below are Borel measurable on each time slice $\{(x,s):x\in \Om\cup \Gamma_{\eps},s=t\}$. The proof is from  \cite[Theorem 5.2]{luirops14}.
We define the operator $T$ as
\[
\begin{split}
Tv(x,t)=\begin{cases}
&\frac{\alpha}{2}
\left\{\sup_{y\in  B_\eps (x)} v\Big(y,t-\frac{\eps^2}{2}\Big) +
\inf_{y\in  B_\eps (x)} v\Big(y,t-\frac{\eps^2}{2}\Big) \right\}
\\
&\hspace{1 em}+\beta \kint_{B_\eps (x)} v\Big(y,t-\frac{\eps^2}{2}\Big)
\ud y \qquad \mbox{for every } (x, t) \in \Omega_\infty
\\
&F(x,t), \qquad \textrm{for every}\quad (x,t) \in
\Gamma_p^\eps.
\end{cases}
\end{split}
\]

\begin{theorem}
\label{thm:dpp-parab}
Given  bounded boundary values $F:\Gamma_p^\eps\to \R$, there exists a unique solution to the parabolic DPP.
\end{theorem}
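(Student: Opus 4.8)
The plan is to exploit the \emph{causal} (triangular) structure of the DPP in the time variable: the value of $Tv$ at a point $(x,t)\in\Om_\infty$ reads $v$ only through the single time slice at level $t-\eps^2/2$, while on $\Gamma_p^\eps$ the value is pinned to $F$. Since $\Gamma_p^\eps$ already contains the full initial strip $\Om\times(-\eps^2/2,0]$, the values on $\{t\le 0\}$ are determined by $F$, and the values on each successive slab of height $\eps^2/2$ are determined by those on the slab below it. This is precisely why finite iteration suffices here, in contrast to the elliptic case.

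Concretely, I would construct $u$ by induction over the time slabs $I_k=((k-1)\tfrac{\eps^2}{2},k\tfrac{\eps^2}{2}]$, $k=1,2,\ldots$. Set $u=F$ on $\Gamma_p^\eps$; in particular $u$ is known on $\Om\times(-\eps^2/2,0]$. Assuming $u$ has been defined on all slices of level $\le (k-1)\eps^2/2$, define it on $\Om\times I_k$ by the right-hand side of the DPP, which only evaluates $u$ at level $t-\eps^2/2\in I_{k-1}$ (or in the initial strip for $k=1$), already available. This yields a function satisfying the DPP on $\Om_\infty$ with $u=F$ on $\Gamma_p^\eps$. Equivalently, one may run $u_{j+1}=Tu_j$ from any bounded $u_0$ agreeing with $F$ on $\Gamma_p^\eps$ and observe that the same causality forces $u_j$ to become eventually constant on each fixed slab (one more iteration locks in one more slab), so the pointwise limit exists and is a fixed point of $T$.

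At each step three routine verifications are needed. Boundedness: since $\tfrac{\alpha}{2}+\tfrac{\alpha}{2}+\beta=1$ with nonnegative weights, each application of $T$ is an averaging operation, so the constructed $u$ stays within $[\inf F,\sup F]$, inherited slab by slab. Measurability is the one genuinely delicate point, and it is exactly why open balls appear in the DPP: if $y\mapsto v(y,s)$ is Borel on a slice, then $x\mapsto\sup_{B_\eps(x)}v(\cdot,s)$ is Borel because the superlevel set can be written as $\{x:\sup_{B_\eps(x)}v(\cdot,s)>\lambda\}=\bigcup_{y:\,v(y,s)>\lambda}B_\eps(y)$, a union of open balls (and symmetrically for the infimum), while $x\mapsto\kint_{B_\eps(x)}v(y,s)\,dy$ is Borel by Fubini; hence each slice of $u$ is Borel.

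Uniqueness then comes for free from the same causality, with no contraction or comparison argument: if $u$ and $v$ both solve the DPP with data $F$, they agree on $\{t\le0\}$ and on $\Gamma_\eps$, and if they agree on all slices of level $\le(k-1)\eps^2/2$, the DPP expresses their values on $\Om\times I_k$ through identical data, forcing agreement there, so induction on $k$ gives $u\equiv v$. The only thing requiring care is the bookkeeping with the half-open slabs $I_k$, including the check that every ball $B_\eps(x)$ used in the DPP for $x\in\Om$ meets only points where $u$ is already defined (points of $\Om$ on the previous slice, or points of $\Gamma_\eps$ where $u=F$, since $\dist(y,\Om)<\eps$ for $y\in B_\eps(x)$). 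I expect this slab-by-slab bookkeeping, together with the measurability step, to be the main thing to get right; the existence and uniqueness themselves are essentially formal consequences of the time causality.
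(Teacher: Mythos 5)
Your proof is correct and takes essentially the same route as the paper: the finite-iteration/time-causality argument (each application of $T$ locks in one more slab of height $\eps^2/2$), with uniqueness by the same induction. The extra details you supply on Borel measurability of the slice-wise suprema and on boundedness are consistent with the remarks the paper makes around its elliptic counterpart.
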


\begin{proof}
Let
$$
u_0(x,t)=\begin{cases} 0,& (x,t)\in \Omega_{\infty}, \\
F(x,t), &(x,t)\in \Gamma_p^{\eps}.
\end{cases}
$$
We claim that the iteration  $u_{i+1}=Tu_i$, $i=0,2,\ldots$ converges in a finite number of steps at any fixed $(x,t)\in \Om_\infty$. To establish this, we use induction, and show that when calculating the values $$u_{i+1}=Tu_i$$ only the values for $t>i\eps^2/2$ can change. This is clear if $i=0$ since then the operator $T$ uses the values for  $t\in (-\eps^2/2,0]$ which are given by $F$. Suppose then that this holds for some $k$ i.e.\ only the values $t>k\eps^2/2$ of $u_k$ can change in the operation $Tu_k$. Then from
\[
\begin{split}
Tu_{k+1}= T(Tu_k)
\end{split}
\]
we deduce that $T$ can only change the values for $t>(k+1)\eps^2/2 $ for $u_{k+1}$. Clearly, the limit satisfies the DPP. Moreover, the uniqueness follows by the same induction argument.   
\end{proof}

The rules of the game and the related notation can be recalled from Section \ref{sec:intro-time-dependent}. 
The value for Player I when starting at $x_0$ with remaining time $t_0$ is defined as  
\[
u_\I^{\eps}(x_0,t_0)=\sup_{S_{\I}}\inf_{S_{\II}}\,
\mathbb{E}_{S_{\I},S_{\II}}^{x_0,t_0}[F(x_{\tau},t_{\tau})],
\]
while the value for Player II is given by
\[
u^{\eps}_\II(x_0,t_0)=\inf_{S_{\II}}\sup_{S_{\I}}\,
\mathbb{E}_{S_{\I},S_{\II}}^{x_0,t_0}[F(x_{\tau},t_{\tau})].
\]
That the game has the unique value
then follows in a similar way as for time independent values in Section \ref{sec:elliptic-value}.

\begin{theorem}\label{th:value-parab}
It holds that $u=u^\eps_\I = u^\eps_\II$, where $u$ is the solution of the DPP
obtained in Theorem \ref{thm:dpp-parab}. 
\end{theorem}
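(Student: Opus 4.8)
The plan is to transcribe the elliptic argument of Theorem~\ref{th:value} almost verbatim, exploiting the time-shift built into the parabolic DPP together with the crucial simplification that in the time-tracking game the stopping time is now \emph{deterministically bounded}: since every round consumes $\eps^2/2$ units of time and the game ends as soon as the remaining time reaches $0$, one always has $\tau\le \lceil 2t_0/\eps^2\rceil$. Consequently there is no need for a separate argument ensuring that the game terminates almost surely, which is the point that required comment in the elliptic case.

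As before it suffices to prove
\begin{equation}
\label{eq:II-parab}
u_\II^\eps(x_0,t_0)\le u(x_0,t_0),
\end{equation}
since the reverse inequality $u_\I^\eps\ge u$ follows by the symmetric argument (Player~I almost maximizing) while $u_\I^\eps\le u_\II^\eps$ always holds; combining the three gives $u=u_\I^\eps=u_\II^\eps$. Write $t_k=t_0-k\eps^2/2$ for the time coordinate after $k$ rounds. To establish \eqref{eq:II-parab} I would let Player~II play the strategy $S_\II^0$ that at a position $(x_{k-1},t_{k-1})$ steps to a point $x_k\in B_\eps(x_{k-1})$ almost realizing the infimum on the \emph{next} time slice,
\[
u(x_k,t_k)\le \inf_{y\in B_\eps(x_{k-1})} u(y,t_k)+\eta 2^{-k},
\]
for a fixed $\eta>0$; such a Borel strategy exists by Lusin's selection theorem exactly as in Section~\ref{sec:elliptic-value}.

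The key computation is that $M_k:=u(x_k,t_k)+\eta 2^{-k}$ is a supermartingale with respect to the history of the game, for \emph{any} strategy $S_\I$ of Player~I. Conditioning on the history up to $(x_{k-1},t_{k-1})$, splitting into the three outcomes (Player~II wins, Player~I wins, random step) with probabilities $\alpha/2$, $\alpha/2$, $\beta$, bounding Player~I's move from above by $\sup_{B_\eps(x_{k-1})}u(\cdot,t_k)$, and then invoking the parabolic DPP at $(x_{k-1},t_{k-1})$ — whose averages are taken precisely at the shifted time $t_{k-1}-\eps^2/2=t_k$ — one obtains
\[
\E_{S_\I,S_\II^0}^{x_0,t_0}\big[M_k\,|\,x_0,\ldots,x_{k-1}\big]\le u(x_{k-1},t_{k-1})+\eta 2^{-k}\Big(1+\tfrac{\alpha}{2}\Big)\le M_{k-1}.
\]
Because $\tau$ is bounded by a deterministic constant, the optional stopping theorem applies immediately to the bounded supermartingale $M_k$, giving $\E_{S_\I,S_\II^0}^{x_0,t_0}[M_\tau]\le M_0=u(x_0,t_0)+\eta$. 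At the terminal state $u(x_\tau,t_\tau)=F(x_\tau,t_\tau)$, since stopping occurs either on the lateral strip $\Gamma_\eps\times(-\eps^2/2,\infty)$ or on the initial strip $\Om\times(-\eps^2/2,0]$, and on both parts of $\Gamma_p^\eps$ the DPP solution equals $F$. Hence
\[
u_\II^\eps(x_0,t_0)\le \sup_{S_\I}\E_{S_\I,S_\II^0}^{x_0,t_0}\big[F(x_\tau,t_\tau)+\eta 2^{-\tau}\big]\le u(x_0,t_0)+\eta,
\]
and letting $\eta\to 0$ proves \eqref{eq:II-parab}.

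I do not expect a genuine obstacle here, since the argument is essentially a transcription of the elliptic one. The only points demanding care are the bookkeeping of the time index inside the conditional expectation — making sure the slice $t_k$ used in the strategy matches the shifted slice $t_{k-1}-\eps^2/2$ appearing in the DPP — and the Borel measurability of $S_\II^0$, both handled as in Section~\ref{sec:elliptic-value}. If anything, the deterministic bound on $\tau$ makes this parabolic version cleaner, as the boundedness and almost-sure termination issues of the elliptic proof become automatic.
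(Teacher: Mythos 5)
Your proposal is correct and is essentially the proof the paper intends: the paper itself gives no separate argument for Theorem~\ref{th:value-parab} but simply states that it "follows in a similar way" from the elliptic case, and your transcription — Player II almost-minimizing on the shifted time slice $t_k=t_{k-1}-\eps^2/2$, the supermartingale $M_k=u(x_k,t_k)+\eta 2^{-k}$, and optional stopping — is exactly that adaptation. Your observation that $\tau\le\lceil 2t_0/\eps^2\rceil$ is deterministic, so that termination and the applicability of optional stopping are automatic, is a correct and genuine simplification over the elliptic setting.
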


\subsubsection{Convergence of value functions to a normalized or game theoretic $p$-parabolic function}

Recall the theorem we stated earlier as Theorem \ref{thm:sol-to-p-parabolic}.
\begin{theorem}
\label{thm:sol-to-p-parabolic2}
Let $\Om$ be a smooth domain, $F:\R^{n+1}\setminus\Om_T\to \R$ a smooth function, and $u$ the unique viscosity solution to the normalized $p$-parabolic boundary value problem
\[
\begin{split}
\begin{cases}
(n+p)u_t-\Delta_p^N u=0 & \text{ in }\Om \times (0,T]\\
 u=F& \text{ on }\partial_p \Om_T.
\end{cases}
\end{split}
\]
Further, let $u_{\eps}$ be the value function for the time tracking tug-of-war with noise with the boundary payoff function $F$. Then 
\[
\begin{split}
u_{\eps} \to u \text{ uniformly on } \ol \Om\times [0,T],
\end{split}
\]
as $\eps\to 0$.
\end{theorem}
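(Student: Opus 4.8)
The plan is to follow the same four-step scheme used for the elliptic Theorem~\ref{thm:sol-viscosity}, now carried out on the space-time cylinder $\ol\Om\times[0,T]$. By Theorems~\ref{thm:dpp-parab} and~\ref{th:value-parab} the value function $u_\eps$ is the unique solution of the parabolic DPP, so it suffices to analyze DPP solutions. First I would record uniform \textbf{boundedness}: since the value is an expectation of the bounded boundary payoff $F$, one has $\norm{u_\eps}_\infty\le\norm{F}_\infty$ independently of $\eps$. Next I would establish $\eps$-independent \textbf{regularity} (an asymptotic modulus of continuity in both $x$ and $t$), separately in the interior and in a neighborhood of the parabolic boundary $\partial_p\Om_T$, as in \cite{manfredipr10c,luirops14,manfredipr12}: near the boundary one pulls towards a fixed boundary point and compares with a \textbf{barrier} supplied by the PDE, using smoothness of $\Om$ and $F$; in the interior one uses copying/cancellation strategies together with the translation invariance of the game. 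With boundedness and these estimates in hand, a version of the \textbf{Arzel\`a--Ascoli} theorem that tolerates small jumps (the $u_\eps$ need not be continuous) lets me pass to a subsequence $u_\eps\to u$ uniformly on $\ol\Om\times[0,T]$ with $u$ continuous.

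The heart of the argument is the stability step: verifying that $u$ is a viscosity solution of $(n+p)u_t=\Delta_p^N u$. Let $\phi\in C^2$ touch $u$ from below at $(x,t)$ with $D\phi(x,t)\neq 0$. By uniform convergence there are points $(x_\eps,t_\eps)\to(x,t)$ at which $u_\eps-\phi$ has an (approximate) minimum, and after adding a constant I may use the DPP for $u_\eps$ together with $\phi\le u_\eps$ to obtain
\[
0\ge -\phi(x_\eps,t_\eps)+\frac{\alpha}{2}\Big\{\sup_{y\in B_\eps(x_\eps)}\phi\big(y,t_\eps-\tfrac{\eps^2}{2}\big)+\inf_{y\in B_\eps(x_\eps)}\phi\big(y,t_\eps-\tfrac{\eps^2}{2}\big)\Big\}+\beta\kint_{B_\eps(x_\eps)}\phi\big(y,t_\eps-\tfrac{\eps^2}{2}\big)\ud y.
\]
Now I would combine the spatial expansions \eqref{exp.lapla} and \eqref{eq:taylor-min-max}, applied on the frozen slice $t_\eps-\tfrac{\eps^2}{2}$, with the one-sided time Taylor expansion $\phi(\cdot,t_\eps-\tfrac{\eps^2}{2})=\phi(\cdot,t_\eps)-\tfrac{\eps^2}{2}\phi_t+o(\eps^2)$. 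Because $\alpha+\beta=1$ the time derivative enters with the common factor $-\tfrac{\eps^2}{2}\phi_t$, and since $\alpha=\tfrac{p-2}{p+n}$, $\beta=\tfrac{2+n}{p+n}$ the spatial parts assemble into $\tfrac{\eps^2}{2(p+n)}\Delta_p^N\phi$, so the right-hand side is at least $-\tfrac{\eps^2}{2}\phi_t+\tfrac{\eps^2}{2(p+n)}\Delta_p^N\phi+o(\eps^2)$. Dividing by $\eps^2/2$ and letting $\eps\to0$ yields $(n+p)\phi_t(x,t)\ge\Delta_p^N\phi(x,t)$, the supersolution inequality; the subsolution case is symmetric. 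In the degenerate case $D\phi(x,t)=0$ I would invoke Theorem~\ref{thm:test-functions-neglect} to restrict to test functions with $D\phi=D^2\phi=0$, for which the spatial contributions drop out and only $(n+p)\phi_t\ge0$ survives.

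Finally, since $u$ is a viscosity solution attaining the boundary values $F$ on $\partial_p\Om_T$, the uniqueness result stated as part of Theorem~\ref{thm:sol-to-p-parabolic} forces every convergent subsequence to the same limit, so the whole family $u_\eps$ converges uniformly to $u$. The main obstacle is the second step: producing the $\eps$-independent regularity estimates, in particular genuine equicontinuity in the time variable and the boundary estimate up to the initial slice $\ol\Om\times\{0\}$ via PDE barriers, whereas the identification step is essentially bookkeeping of Taylor expansions once compactness is available. I expect the parabolic boundary analysis to be more delicate than in the elliptic case, since one must control behavior both as $x$ approaches $\partial\Om$ and as the clock runs out.
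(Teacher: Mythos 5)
Your proposal is correct and follows essentially the same route as the paper: reduce to solutions of the parabolic DPP, obtain compactness via uniform boundedness, barrier/cancellation-type regularity estimates and an Arzel\`a--Ascoli variant tolerating small jumps, then identify the limit as a viscosity solution by testing the DPP against the space-time Taylor expansion of a touching test function (with the degenerate case $D\phi=0$ handled via the reduction to $D^2\phi=0$ and the inequality $(n+p)\phi_t\ge 0$), and conclude by uniqueness. The paper likewise carries out only the identification step in detail and omits the regularity and boundary considerations, and your bookkeeping of the factor $\frac{\beta}{2(n+2)}=\frac{1}{2(p+n)}$ and of the common time term $-\frac{\eps^2}{2}\phi_t$ matches the computation there.
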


%%%%%%%%%%
\begin{proof}
We omit the regularity, convergence and considerations at the latest moment, and only concentrate on the viscosity solution property of the limit for $(x,t)\in \Om_T$.
For $\phi\in C^2$, we select $x_1^{\eps,t-\eps^2/2}\in \ol B_\eps(x)$ such that 
\[
\begin{split}
\phi\Big(x_1^{\eps,t-\eps^2/2},t-\frac{\eps^2}{2}\Big)=
\min_{y\in \ol B_\eps(x)} \phi\Big(y,t-\frac{\eps^2}{2}\Big).
\end{split}
\]
Similarly as above in the elliptic case in (\ref{eq:asymp-p-lap}), we can derive 
an estimate
\begin{equation}
\label{eq:parab-taylor-comb-pointwise}
\begin{split}
 \frac{\alpha}{2}&
\Bigg\{\max_{y\in \ol B_\eps (x)} \phi \Big(y,t-\frac{\eps^2}{2}\Big) +
\min_{y\in \ol B_\eps(x)} \phi \Big(y,t-\frac{\eps^2}{2}\Big)\Bigg\}\\
&\hspace{5 em}+\beta \kint_{B_\eps (x)}
\phi\Big(y,t-\frac{\eps^2}{2}\Big) \ud y  - \phi(x,t)
\\&
\geq \frac{\beta \eps^2}{2(n+2)}\Bigg(
(p-2)\left\langle
D^2\phi(x,t)\frac{x_1^{\eps,t-\eps^2/2}-x}{\eps},
\frac{x_1^{\eps,t-\eps^2/2}-x}{\eps}\right\rangle
\\
&\hspace{7 em}+\Delta \phi(x,t)-(n+p)\phi_t(x,t) \Bigg)+o(\eps^2).
\end{split}
\end{equation}

Suppose then that $
\phi$ touches the limit $u$ at $(x,t)$  from below. By the uniform convergence, there exists a sequence
$(x_{\eps},t_\eps) $ converging to $(x,t)$ such that $u_{\eps} - \phi $
has an (approximate) minimum at $(x_\eps,t_\eps)$.  More precisely, again omitting small constant $\eta_{\eps}>0$ due to discontinuity, there exists $(x_\eps,t_\eps)$ such that
$$
u_{\eps} (y,s) - \phi (y,s) \geq u_{\eps} (x_\eps,t_\eps) - \phi
(x_\eps,t_\eps) 
$$
at the vicinity of $(x_\eps,t_\eps)$. 
Further, by adding a constant, we may assume that
\[
\begin{split}
u_{\eps} (x_\eps,t_\eps) = \phi
(x_\eps,t_\eps)\quad \text{and}\quad  u_{\eps} (y,s)\geq \phi(y,s).
\end{split}
\]
Recalling the fact that $u_\eps$ is the game value and thus satisfies the parabolic DPP, we obtain
\begin{equation}
\label{eq:asymp-expansion-almost-negative}
\begin{split}
0 \geq &-\phi(x_\eps,t_\eps)+\beta \kint_{B_\eps (x_\eps)}
 \phi\Big(y,t_\eps-\frac{\eps^2}{2}\Big)
\ud y\\
&+ \frac{\alpha}{2}
\left\{ \sup_{y\in \ol B_\eps (x_\eps)} \phi\Big(y,t_\eps-\frac{\eps^2}{2}\Big) +
\inf_{y\in \ol B_\eps (x_\eps)} \phi\Big(y,t_\eps-\frac{\eps^2}{2}\Big) \right\}.
\end{split}
\end{equation}
According to \eqref{eq:parab-taylor-comb-pointwise},  we have
\[
\begin{split}
0\geq \frac{\beta \eps^2}{2(n+2)}&\Bigg(
(p-2)\left\langle
D^2 \phi(x_\eps,t_\eps)\frac{x_1^{\eps,t_\eps-\eps^2/2}-x_\eps}{\eps},
\frac{x_1^{\eps,t_\eps-\eps^2/2}-x_\eps}{\eps}\right\rangle
\\
&\hspace{1 em}+\Delta  \phi(x_\eps,t_\eps)-(n+p) \phi_t(x_\eps,t_\eps) \Bigg)+o(\eps^2),
\end{split}
\]
where $x_1^{\eps,t_\eps-\eps^2/2}$ is now the minimum point related to $(x_{\eps},t_{\eps})$.
Suppose that $ D \phi(x,t)\neq 0$. Dividing by $\eps^2$ and
letting $\eps \to 0$, we get
\[
\begin{split}
0 &\geq
  \frac{\beta}{2(n+2)}\big((p-2) \Delta^N_{\infty} \phi(x,t)
+\Delta \phi (x)-(n+p) \phi_t(x,t)\big).
\end{split}
\]
To verify the other half of the definition of a viscosity solution, we derive a
reverse inequality to
\eqref{eq:parab-taylor-comb-pointwise} by considering the maximum point of the
test function  and choose a function $\phi$ which touches $u$ from
above. The rest of the argument is similar.

In the parabolic case, we also need to consider $ D \phi(x,t)=0$. By
Theorem \ref{thm:test-functions-neglect}, we can also assume that
$D^2 \phi(x,t)=0$ and it suffices to show that
\[
\begin{split}
\phi_t(x,t)\geq 0.
\end{split}
\]
Since in this case
\begin{align*}
\left\langle
D^2 \phi(x_\eps,t_\eps)\frac{x_1^{\eps,t_\eps-\eps^2/2}-x_\eps}{\eps},
\frac{x_1^{\eps,t_\eps-\eps^2/2}-x_\eps}{\eps}\right\rangle\to 0
\end{align*}
as $\eps \to 0$ on the right hand side of \eqref{eq:parab-taylor-comb-pointwise}, and  \eqref{eq:asymp-expansion-almost-negative} still holds, dividing \eqref{eq:parab-taylor-comb-pointwise} by $\eps^2$ and passing to the limit gives $\phi_t(x,t)\ge 0$.
\end{proof}

Observe that the above theorem also gives a probabilistic
proof for the existence of viscosity solutions to $(n+p)u_t=\Delta_p^N u$.

%%%%%%%%%%%%%%%%%%%%%%%%%%%%%%%%%%%%%%%%%%%%%%%%%%%%%%%%%%%%%%%%

\section{Cancellation method for regularity of the tug-of-war with noise}
\label{sec:Lip-regularity}

In this section, we show that a value function for the tug-of-war with noise is (asymptotically) locally Lipschitz continuous. We follow \cite{luirops13}.

By passing to the limit with the step size, we obtain a new and direct game theoretic proof for Lipschitz continuity of
$p$-harmonic functions with $p>2$.
The proof  is based on the suitable choice of strategies, and is thus
completely different from the proofs based on the works of De Giorgi, Moser, or
Nash.

\subsection{Local Lipschitz estimate}
\label{sec:lipschitz}

We start by giving an example of stochastic approach in the regularity theory.
%%%%%%%%%%
\begin{example}[Simple proof of Harnack by playing games]
\label{ex:simple-harnack}
\idea If a player can guarantee to reach a single point with a
uniform positive probability before the game position is too far away, then regularity properties should follow.

Let $u_{\eps}\ge 0$ be a value function for the tug-of-war with noise defined in a large enough domain. Then let
 $x_0,y_0\in \overline B_1(0)$ such that
\begin{align*}
u_{\eps}(x_0)=\inf_{B_1(0)} u_{\eps} \quad \text{ and }\quad u_{\eps}(y_0)= \sup_{B_1(0)} u_{\eps}.
\end{align*}
We omit small constants by assuming above that we attain $\inf$ and $\sup$.

 We start the game at $x_0$ and stop if we reach $y_0$ or exit $B_2(0)$. Player I tries to reach the point $y_0$. Assume that there is a uniform (independent of $\eps$) lower bound $P>0$ for the probability of reaching $y_0$ before exiting  $B_2(0)$. Then we can estimate
 \begin{align*}
\inf_{B_1(0)} u_{\eps} = u_{\eps}(x_0)\ge P u_{\eps}(y_0)+(1-P)\cdot 0=P u_{\eps}(y_0) = P\sup_{B_1(0)} u_{\eps}.
\end{align*}
Thus we have shown Harnack's inequality 
$$
\sup_{B_1(0)} u_{\eps}\le C \inf_{B_1(0)} u_{\eps},
$$ 
which has the well-known connection to the Hölder continuity. 

Unfortunately, the desired uniform $P$ only exists when $p>n$ (recall that probabilities in the tug-of-war were given in terms of the parameter $p$). If $p\le n$, 
a player cannot force the game position to a point with uniformly positive probability and we need to come up with another approach.
\end{example}
%%%%%%%%%%

{\sc Idea} of the  proof  of the Lipschitz estimate: 
\begin{itemize}
\item We want to estimate the difference of values of game instances started at two different points.
\item  Use a {\bf cancellation strategy}: We try to cancel all the steps of the opponent, and make a certain translation (good case). If we can do this,  we can focus on random steps. 
\item Once we complete the cancellation starting at the two different points, then there will be a cancellation in the game values (cancellation is used in \eqref{eq:key-cancellation} below)
\item It remains to estimate the probability of the good case by a specialized random walk called {\bf 'cylinder walk'} (cylinder walk is a linear problem).
\end{itemize}

Also observe that we need to include coin tosses to the history of the game in order to run the cancellation strategy i.e.\ to know the previous steps of the opponent. Nonetheless, earlier proofs work in the same way as before with such a history. We also assume that $p>2$ implying $\alpha>0$ so that we have players and strategies to work with. 

\begin{theorem}[Lipschitz estimate]
\label{thm:lipschitz-continuity}
Let $u_{\eps}$ be a value function in $\Om$ for the tug-of-war with noise with $\alpha>0$, assume that $B_{10r}(z_0)\subset\Om$ and $r>\varepsilon .$ Then
\[
|u_{\eps}(x)-u_{\eps}(y)|\leq L |x-y|(\sup_{B_{6r}(z_0)} u_{\eps}-\inf_{B_{6r}(z_0)} u_{\eps})/r
\]
for all  $x,y\in B_{r}(z_0)$ with $|x-y|\geq\eps$.
\end{theorem}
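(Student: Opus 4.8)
The plan is to prove the estimate by running two copies of the game simultaneously, one started at $x$ and one at $y$, driven by a single shared sequence of coin tosses, and to design strategies so that with high probability the two tokens are forced to the same point before either game leaves $B_{6r}(z_0)$. Writing $v=x-y$ and $\nu=v/\abs{v}$, the difference $u_\eps(x)-u_\eps(y)$ will be controlled by the probability of a ``bad event'' (the tokens failing to meet inside the ball) times the oscillation $\osc_{B_{6r}(z_0)}u_\eps=\sup_{B_{6r}(z_0)}u_\eps-\inf_{B_{6r}(z_0)}u_\eps$, so that it suffices to bound this probability by $C\abs{x-y}/r$. The assumption $B_{10r}(z_0)\subset\Om$ gives the coupled games room to run before exiting the domain.

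First I would fix the coupling. At each round the shared coin decides between a noise step (probability $\beta$) and a tug step (probability $\alpha$), and in the tug case which player moves (each with probability $\half$). For the noise step I would use a \emph{reflection coupling}: the increment $h$ drawn uniformly for the $x$-token is reflected across the hyperplane orthogonal to $\nu$ to produce the increment for the $y$-token. Since reflection preserves both $\abs{h}$ and the uniform law on $B_\eps$, this is a legitimate noise step, and it leaves the component of $x_k-y_k$ orthogonal to $\nu$ unchanged while turning the $\nu$-component into a genuine one-dimensional random walk. For the tug step I would have one game's moving player follow a near-optimal strategy while the coupled game responds with a \textbf{cancellation strategy} that reproduces the very same move, so that tug steps leave the difference vector $x_k-y_k$ unchanged; the displacement then evolves only through the reflection-coupled noise. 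The ``good case'' is that every cancellation is admissible; the ``bad case,'' where a prescribed move would exit $B_\eps(x_k)$ and cannot be copied, is rare and gets absorbed into the error.

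Next I would reduce everything to the one-dimensional \textbf{cylinder walk}, namely the signed separation $d_k=\langle x_k-y_k,\nu\rangle$. Because the initial difference $x-y=\abs{v}\nu$ lies along $\nu$ and the coupling keeps the transverse part locked, the tokens coincide exactly when $d_k=0$, and $d_k$ performs a one-dimensional random walk. A one-dimensional walk is governed by the \emph{linear} problem analyzed in Examples \ref{ex:1d-random} and \ref{ex:surplus-of-two} (its harmonic functions are affine), so a gambler's-ruin computation shows that the probability that $\abs{d_k}$ grows to order $r$ — forcing the games apart and eventually out of $B_{6r}(z_0)$ — before hitting $0$ is comparable to $\abs{x-y}/r$. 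Feeding this bound into the identity for the two coupled payoffs, where the cancellation makes the contributions on the meeting event cancel, and applying the optional stopping theorem to the resulting super/submartingale exactly as in Theorems \ref{th:value} and \ref{th:value-parab}, yields $\abs{u_\eps(x)-u_\eps(y)}\le L\abs{x-y}(\sup_{B_{6r}(z_0)}u_\eps-\inf_{B_{6r}(z_0)}u_\eps)/r$ for $\abs{x-y}\ge\eps$.

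The hard part will be making the cancellation strategy genuinely admissible and controlling the bad case. One must verify that the reflected and copied moves always remain inside the prescribed $\eps$-balls, handle the rounds where the opponent's move cannot be cancelled, and check that the errors from the discreteness of the step $\eps$ and from the small constants $\eta_\eps$ (needed since $u_\eps$ may be discontinuous and extrema need not be attained) do not spoil the scaling. Equally delicate is the hitting estimate for the cylinder walk: it must be uniform in $\eps$ and carry the correct linear dependence on $\abs{x-y}/r$, which is precisely where the hypothesis $\abs{x-y}\ge\eps$ and the condition $\alpha>0$ (guaranteeing that tug steps, hence cancellations, occur) are used.
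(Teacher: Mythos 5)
Your proposal takes a genuinely different route from the paper, and as written it has two gaps that I do not think can be patched without changing the architecture of the argument. The first concerns the move-copying. To bound $u_\eps(x)-u_\eps(y)$ you must fix a strategy $S^0_{\II}$ for Player II in the game at $x$ and a strategy $S^0_{\I}$ for Player I in the game at $y$, while the \emph{other} player in each game remains adversarial. On a round where the shared coin gives the move to Player II, the mover in the game at $x$ is your controlled player but the mover in the game at $y$ is the adversary $S_{\II}$, who is free to move anywhere in $B_\eps(y_k)$ and in particular will not copy your move; symmetrically for Player I if you set the bound up the other way. So the claim that ``tug steps leave the difference vector $x_k-y_k$ unchanged'' fails on half of the tug rounds, and there is also a causality problem in letting the controlled player of one game depend on the same-round move of the other game. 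The second gap is the meeting event: under a reflection coupling the signed separation $d_k$ evolves by continuous, symmetrically distributed increments $2\langle h,\nu\rangle$, so it almost surely never equals $0$; the tokens never exactly coincide, the ``good event'' as you define it has probability zero, and the gambler's-ruin bound does not produce the cancellation you need. This near-diagonal difficulty is exactly why the coupling-of-DPPs method in Section \ref{sec:intuition} introduces a correction term in the comparison function when the two points are close, and why that method naturally yields H\"older rather than Lipschitz estimates.

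The paper avoids both problems by never coupling the two games pathwise. Instead it fixes a point $z$ with $\abs{z-x}=\abs{z-y}=m\eps$ and, \emph{within each single game}, has the controlled player cancel the adversary's moves and, when all moves are cancelled, step towards $z$. The game position decomposes as the starting point plus the players' moves plus the noise, and on the good event (the controlled player accumulates a surplus of $m$ wins before the other stopping conditions trigger) the players' moves sum exactly to $z-x$ (resp.\ $z-y$), so the terminal position is $z+\sum_{k\in I_3}h_k$ in both games. The comparison between the two games is then made only through the \emph{law} of this endpoint conditioned on the good event, which is strategy- and starting-point-independent; this is the cancellation identity \eqref{eq:symmetry} that makes the conditional expectations coincide, and Lemma~\ref{lem:value-estimate} plus optional stopping convert it into $\abs{u_\eps(x)-u_\eps(y)}\le (1-P)\operatorname{osc}_{B_{6r}(z_0)}u_\eps$. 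Finally, $1-P$ is not a one-dimensional gambler's ruin for the token separation but the exit probability of the $(n+1)$-dimensional cylinder walk of Lemma~\ref{lemma:speed}, whose vertical coordinate is the surplus of coin-toss wins and whose horizontal coordinate is the accumulated noise; it is estimated by building a supersolution of $\Delta_x u+(p-2)u_{yy}=0$ from a rescaled fundamental solution. If you want to pursue a two-point coupling, the analytic formulation via a DPP in $\R^{2n}$ with an explicit comparison function (Section~\ref{sec:intuition}) is the robust way to do it.
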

\begin{proof}
Let $x,y\in B_r(z_0)$ and $2m:=\abs{x-y}/\eps$. Fix $z\in B_{2r}(z_0)$ so that 
$$
|z-x|=|z-y|=m \eps.
$$ 
We define a strategy $S^0_{\II}$ for Player II for the game that starts at $x$: he always tries to cancel the earliest move of Player I which  he has not yet been able to cancel. If all the moves at that moment are cancelled and he wins the coin toss, then he takes the step\footnote{Here we made two simplifications: actually the players cannot quite take steps of size $\eps$ %since they choose the next point from an open ball, 
and $\abs{x-y}/\eps$ is not necessarily an even integer $2m$: nonetheless, these issues could be treated by small corrections.} 
\begin{align*}
\eps \frac{z-x}{|z-x|}.
\end{align*}

At every moment we can divide the game position as a sum of vectors
\[
x+\sum_{k\in I_1}x^1_k+\sum_{k\in I_2}x^2_k+\sum_{k\in I_3}h_k\,.
\]
Here $I_1$ denotes indices of rounds when Player I has moved, vectors $x^1_k$ are her moves, and  $x^2_k$ represent the moves of Player II.
Set $I_3$ denotes indices for random moves, and these vectors are denoted by $h_k$.

Then we define a stopping time ${{\tau^*}}$ for this process. We give three conditions to stop the game:
\begin{enumerate}[(i)]
\item  \label{stop-i} If Player II has won $m$ more turns than Player I. (Good case)
\item \label{stop-ii} If Player I has won at least $\frac{2r}{\eps}$ turns more than Player II.
\item \label{stop-iii} If $|\sum_{k\in I_3}h_k|\geq 2r$.
\end{enumerate}
This stopping time is finite with probability $1$, and does not depend on strategies nor starting points. 
Let us then consider the situation when the game has ended by condition (\ref{stop-i}). In this case,
the sum of the steps made by players  can easily  be computed, and it is
\[
\begin{split}
\sum_{k\in I_1}x^1_k+\sum_{k\in I_2}x^2_k=m \frac{\eps(z-x)}{|z-x|}=z-x\,.
\end{split}
\]
Thus the stopping point $x_{{\tau^*}}$ is actually randomly chosen around $z$: in case (\ref{stop-i}), we have
\begin{equation}
\label{eq:xtau}
\begin{split}
x_{{\tau^*}}=z+\sum_{k\in I_3}h_k\,.
\end{split}
\end{equation}
These conditions also guarantee that we never exit $B_{6r}(z_0)$.

A crucial point is the following cancellation effect. Let $S^0_I$ be the corresponding cancellation strategy for Player I when starting at $y$. Then
\begin{equation}
\label{eq:symmetry}
\begin{split}
\mathbb{E}^x_{S_{\I},S^0_{\II}}&[u_{\eps}(x_{{\tau^*}})\,|\trm{ game ends by condition (i)}]\\
=&\mathbb{E}^{y}_{S^0_{\I},S_{\II}}[u_{\eps}(x_{{\tau^*}})\,|\trm{ game ends by condition (i)}]
\end{split}
\end{equation}
for any choice of the strategies $S_{\I}$ or $S_{\II}$. Indeed, if the game ends by the condition (i), then $x_{{\tau^*}}$ in \eqref{eq:xtau} has only the random part which is independent of the strategies and starting points.

Let us denote by $P$ the probability that the game ends by the condition (i). 
Then assuming without loss of generality that $u_{\eps}(x)\ge u_{\eps}(y)$ and also using a localization result, Lemma~\ref{lem:value-estimate} below, we get
\begin{align}
\label{eq:key-cancellation}
|u_{\eps}(x)-u_{\eps}(y)|&\stackrel{\text{Lemma~\ref{lem:value-estimate} }}{\le}\abs{\sup_{S_{\I}}\mathbb{E}^x_{S_{\I},S^0_{\II}}[u_{\eps}(x_{{\tau^*}})]
-\inf_{S_{\II}}\mathbb{E}^y_{S^0_{\I},S_{\II}}[u_{\eps}(x_{{\tau^*}})]}\nonumber \\
&\hspace{1.5 em}\le P\Big|\sup_{S_{\I}}\mathbb{E}^x_{S_{\I},S^0_{\II}}[u_{\eps}(x_{{\tau^*}})\,|\,\trm{(i)}]-\inf_{S_{\II}}\mathbb{E}^y_{S^0_{\I},S_{\II}}[u_{\eps}(x_{{\tau^*}})\,|\,\trm{(i)}]\Big|\nonumber \\
&\hspace{2.5 em}+(1-P)(\sup_{B_{6r}(z_0)}u_{\eps}-\inf_{B_{6r}(z_0)}u_{\eps})\nonumber \\
&\hspace{1.3 em}\stackrel{\text{(\ref{eq:symmetry})}}{\le} (1-P)(\sup_{B_{6r}(z_0)}u_{\eps}-\inf_{B_{6r}(z_0)}u_{\eps}),
\end{align}
where we shortened 'game ends by condition (i)' to '(i)', and also used the fact that we never exit $B_{6r}(z_0)$. 

It remains to verify that $1-P$ is small enough. To this end, observe that with the choice $\ell=m\varepsilon$ the  estimate for the `cylinder walk' in Lemma \ref{lemma:speed} below gives the upper  bound for $1-P$ in the form
$$
1-P\leq  C(p,n)m\varepsilon/r\leq L|x-y|/r
$$
for $|x-y|\geq \varepsilon .$
\end{proof}

The following observation was used above in localizing the estimates and also in the Harnack example.
\begin{lemma}
\label{lem:value-estimate}
Let $\tau^*$ be a stopping time with $\tau^*\leq\tau $, where $\tau$ is the usual exit time from the domain. Then
\[
\begin{split}
u_{\eps}(y)\leq\sup_{S_{\I}}\mathbb{E}^y_{S_{\I},S^0_{\II}}[u_{\eps}(x_{\tau^*})],
\end{split}
\]
for any fixed $S^0_\II$. Similarly
\[
\begin{split}
u_{\eps}(y)\geq\inf_{S_{\II}}\mathbb{E}^y_{S^0_{\I},S_{\II}}[u_{\eps}(x_{\tau^*})],
\end{split}
\]
for any fixed $S^0_\I$.
\end{lemma}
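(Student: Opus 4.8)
The plan is to reuse, almost verbatim, the optional-stopping argument from the proof of Theorem~\ref{th:value}, the only change being that the game is now halted at the earlier time $\tau^*$ and we read off the value $u_\eps(x_{\tau^*})$ in place of the boundary payoff $F(x_\tau)$. The conceptual point is that the DPP satisfied by $u_\eps$ says precisely that $u_\eps$, evaluated along the trajectory, is up to an arbitrarily small correction a sub- or supermartingale under suitably chosen strategies, and this property can be exploited at \emph{any} stopping time, not just at the exit time. Since $\tau^*\le\tau$, the stopping point $x_{\tau^*}$ still lies in $\ol\Om\cup\Gamma_\eps$, the set on which $u_\eps$ is defined, and $u_\eps$ is bounded there because $F$ is bounded.

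For the first inequality I would fix an arbitrary $S^0_\II$ and an $\eta>0$, and let Player~I follow the almost-maximizing strategy $S_\I$ that at $x_{k-1}$ steps to a point $x_k\in B_\eps(x_{k-1})$ with $u_\eps(x_k)\ge \sup_{B_\eps(x_{k-1})}u_\eps-\eta 2^{-k}$; such a Borel strategy exists by the same countable-selection argument invoked in Theorem~\ref{th:value}. Plugging this into the DPP exactly as in that proof (now with the roles of the two players exchanged) shows that
\[
M_k:=u_\eps(x_k)-\eta 2^{-k}
\]
is a bounded submartingale with respect to the history of the game, regardless of $S^0_\II$; here one uses $\a<1$, so that $1+\tfrac{\a}{2}\le 2$ and the geometric correction is absorbed. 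Applying the optional stopping theorem to the bounded submartingale $M_k$ at the a.s.\ finite time $\tau^*$, together with $M_0=u_\eps(y)-\eta$ and $M_{\tau^*}\le u_\eps(x_{\tau^*})$, gives
\[
u_\eps(y)-\eta\le \mathbb{E}^{y}_{S_\I,S^0_\II}[u_\eps(x_{\tau^*})]\le \sup_{S_\I}\mathbb{E}^{y}_{S_\I,S^0_\II}[u_\eps(x_{\tau^*})],
\]
and letting $\eta\to 0$ proves the first claim.

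The second inequality is the mirror image: for a fixed $S^0_\I$, let Player~II play the almost-minimizing strategy, which makes $M_k:=u_\eps(x_k)+\eta 2^{-k}$ a bounded supermartingale, and then optional stopping at $\tau^*$ followed by $\eta\to 0$ yields $u_\eps(y)\ge\inf_{S_\II}\mathbb{E}^{y}_{S^0_\I,S_\II}[u_\eps(x_{\tau^*})]$. I expect no serious obstacle here, since all the machinery is already in place; the only points requiring care are the Borel measurability of the almost-optimal strategies (identical to Theorem~\ref{th:value}) and the verification that the optional stopping theorem applies, which reduces to the facts that the game terminates with probability one (so $\tau^*\le\tau<\infty$ almost surely) and that $M_k$ is bounded because $u_\eps$ is bounded on $\ol\Om\cup\Gamma_\eps$.
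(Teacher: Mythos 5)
Your proposal is correct and follows essentially the same route as the paper: fix the opponent's strategy, let the other player play an almost-optimal strategy so that $u_\eps(x_k)\mp\eta 2^{-k}$ becomes a bounded sub-/supermartingale via the DPP, apply optional stopping at $\tau^*$, and let $\eta\to 0$. No substantive differences.
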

%%%%%%%%%%%%%%%%%%%
\begin{proof}
We only prove the first statement; the proof of the second statement is similar. Assume that we are given a fixed strategy $S^0_\II$. Let then
Player I  follow  a strategy
$S^\trm{max}_\I$ such that at $x_{k-1}\in \Om$ she chooses to step to a
point that almost maximizes $u_\eps$, that is, to a point $x_k \in B_\eps (x_{k-1})$ such that
\[
u_\eps(x_k)\ge \sup_{ B_\eps(x_{k-1})} u_\eps-\eta 2^{-k}
\]
for some fixed $\eta>0$. By this choice and the DPP, we get
\begin{align}
\label{eq:key-mart}
&\mathbb{E}_{S^\trm{max}_\I, S^0_\II}^{x_0}[u_\eps(x_k)-\eta 2^{-k}\,|\mathcal F_{k-1}] \nonumber
\\
&\ge \frac{\alpha}{2} \left\{\inf_{ B_\eps(x_{k-1})} u_\eps+\sup_{  B_\eps(x_{k-1})}
u_\eps-\eta 2^{-k}\right\}+ \beta \kint_{ B_{\eps}(x_{k-1})}
u_\eps \ud y-\eta 2^{-k}\nonumber
\\&\ge  u_\eps(x_{k-1})-\eta 2^{-(k-1)}.
\end{align}
Above $\mathcal F_{k-1}$ denotes the history of the game. Earlier we denoted this by $(x_0,\ldots,x_{k-1})$ but now we also need to include the coin tosses to be able to run the cancellation strategy, and opted for this shorthand. 

From the above it follows that  $M_k=u_\eps(x_k)-\eta 2^{-k}$ is a bounded  submartingale.
By the optional stopping theorem,  it follows that
\[
\begin{split}
\sup_{S_{\I}}\mathbb{E}_{S_{\I},S^0_{\II}}^{y}[u_\eps(x_{\tau^*})]&\ge  \mathbb{E}_{S^\trm{max}_\I, S^0_\II}^{y}[u_\eps(x_{\tau^*})]\\
&\ge \mathbb{E}_{S^\trm{max}_\I, S^0_\II}^{y}[u_\eps(x_{\tau^*})-\eta 2^{-\tau^*}]\\
&\ge \mathbb{E}^y_{S^\trm{max}_\I, S^0_\II}[M_0]=u_\eps(y)-\eta.
\end{split}
\]
Since $\eta>0$ is arbitrary, this completes the proof.
\end{proof}

\subsection{Cylinder walk}
\label{sec:cylinder-walk}
At the end of the proof of Theorem \ref{thm:lipschitz-continuity}, we needed an estimate for $P$, the probability that one of the players gets a surplus of $m$ wins in the coin toss before other stopping conditions take effect.

To start, consider the following random walk (called the cylinder walk) in  a $n+1$ -dimensional cylinder $B_{2r}(0)\times (0,2r+\ell)$, $B_{2r}(0)\subset \R^n$ as in Figure~\ref{fig:cylinderwalk}. Suppose that we are at a point 
$$
(x_j,y_j)\in B_{2r}(0)\times (0,2r+\ell).
$$ 

\begin{itemize}
\item With probability $\alpha/2$ we move to the point $(x_{j+1},y_{j+1}):=(x_j,y_{j}-\eps)$.
\item With probability $\alpha/2$ we move to the point $(x_{j+1},y_{j+1}):=(x_j,y_{j}+\eps)$.
\item With probability $\beta$ we move to the point $(x_{j+1},y_{j+1}):=(x_{j+1},y_j)$, where $x_{j+1}$ is randomly chosen from the ball $B_{\eps}(x_j)$ according to the uniform probability distribution.
\end{itemize}
Moreover, we stop when exiting the cylinder and denote by $\tau'$ the exit time. 

\idea If we start at $(0,\ell)=(0,m\eps)$, exiting through different parts of the boundary corresponds to the stopping rules  (\ref{stop-i})-(\ref{stop-iii}) above. In particular, the good case corresponds exiting through $B_1(0) \times \{0\}.$

\begin{figure}[h]
 \begin{tikzpicture}[scale=0.7]
 %Cylinder itself
 \draw (0,6.25) ellipse (4 and 1.6);
 \draw (-4,0) arc (180:360:4 and 1.6);
 \draw[dashed] (4,0) arc (0:180:4 and 1.6);
 \draw (4,6.25) -- (4,0);
 \draw (-4,6.25) -- (-4,0);
 %The point and its possible movements
 \filldraw[black] (0,2) circle (1.5pt);
 \filldraw[gray] (0,2.9) circle (1.5pt);
 \filldraw[gray] (0,1.1) circle (1.5pt);
 \draw[thick,->] (0,2.1) -- (0,2.8);
 \draw[thick, ->] (0,1.6) -- (0,1.2);
 \draw[densely dashed] (0,1.9) -- (0,1.7);
 \draw (0,2) ellipse (0.9 and 0.4);
 \draw (1.4,2.4) -- (0.1,2.03);
 \node[scale=0.9] at (2,2.4) {$(0,\ell)$};
 \node[scale=0.9] at (-1.1,3.2) {$(0,\ell+\varepsilon)$};
 \node[scale=0.9] at (1.1,0.8) {$(0,\ell-\varepsilon)$};
 \node[scale=0.9] at (-1.6,2.1) {$B_\varepsilon(0)$};
 %Bottom
 \draw (0,0) -- (-4,0);
 \node[scale=0.9] at (-2.2,0.25) {$2r$};
 \filldraw[black] (0,0) circle (1.5pt);
 %Bracket
 \draw[decorate,decoration={brace,mirror,amplitude=15pt}] (4.3,0) -- (4.3,6.25);
 \node at (6.1,3.12) {$2r+\ell$};
 %Boundary values
 \node at (0,-1.9) {$1$};
 \node at (0,6.25) {$0$};
 \node at (4.2,3.12) {$0$};
 \node at (-4.2,3.12) {$0$};
 \end{tikzpicture}
\caption{The initial setting of the cylinder walk. The boundary values $0$ or $1$ are selected in the proof of Lemma~\ref{lemma:speed}.}
\label{fig:cylinderwalk}
\end{figure}
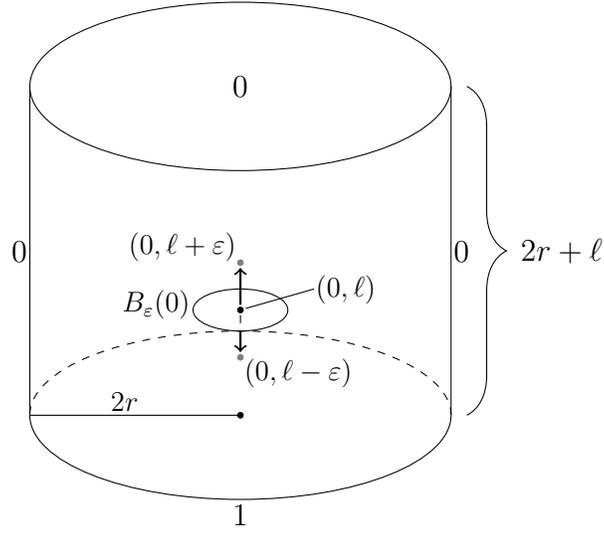

The next lemma gives an estimate for the probability that the cylinder walk escapes through the bottom. In \cite{luirops13}, there is a direct stochastic proof, but here we give a PDE oriented proof.
\begin{lemma}
\label{lemma:speed}
Let us start the cylinder walk at the point $(0,\ell)$. Then the probability that the walk does not escape the cylinder through its bottom is less than
$$
L(\ell+\varepsilon)/r,
$$
for all $\eps>0$ small enough.
\end{lemma}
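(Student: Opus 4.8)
The plan is to read $1-P$ as the value function of the (linear) cylinder walk and to dominate it by an explicit quadratic barrier. Orient the cylinder $\mathcal{C}=B_{2r}(0)\times(0,2r+\ell)$ so that $y=0$ is the bottom, and assign the boundary payoff $F=0$ on the bottom $B_{2r}(0)\times\{0\}$ and $F=1$ on the top $B_{2r}(0)\times\{2r+\ell\}$ together with the lateral side $\partial B_{2r}(0)\times[0,2r+\ell]$. If $v_\eps$ denotes the expected payoff of the cylinder walk started at $(x,y)$, then $v_\eps(0,\ell)$ is exactly the probability of \emph{not} exiting through the bottom, i.e.\ $1-P$. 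Because the vertical move is a fair $\pm\eps$ coin (probability $\alpha/2$ each) and the horizontal move is a uniform step in $B_\eps$ (probability $\beta$), $v_\eps$ satisfies the linear dynamic programming principle $v_\eps=Tv_\eps$ in the interior with
\[
Tv(x,y)=\frac{\alpha}{2}\big(v(x,y-\eps)+v(x,y+\eps)\big)+\beta\kint_{B_\eps(x)}v(z,y)\ud z,
\]
and $v_\eps=F$ on the boundary strip.

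First I would dispose of the trivial regime: if $\ell+\eps\ge r$, then $1-P\le 1\le(\ell+\eps)/r$ and there is nothing to prove, so I may assume $\ell+\eps\le r$. The key step is then to produce a barrier
\[
\psi(x,y)=A\,\frac{y+\eps}{r}+\frac{|x|^2}{4r^2}-E\,\frac{(y+\eps)^2}{r^2},
\]
with constants $A,E$ depending only on $p$ and $n$. Since $\psi$ is a quadratic polynomial, $T\psi$ can be evaluated \emph{exactly} (no Taylor error): using $\kint_{B_\eps(0)}|h|^2\ud h=\frac{n\eps^2}{n+2}$ and the exact second-order expansion in $y$, one finds
\[
T\psi-\psi=\frac{\eps^2}{r^2}\Big(\frac{\beta n}{4(n+2)}-\alpha E\Big),
\]
so $\psi$ is an exact supersolution ($T\psi\le\psi$) precisely when $E\ge E_0:=\frac{\beta n}{4\alpha(n+2)}$, which is where the standing assumption $\alpha>0$ (i.e.\ $p>2$) enters. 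With $E=\max(E_0,1)$ and $A=4E$, I would then verify the three boundary inequalities $\psi\ge F$: on the bottom the linear and the $|x|^2$-terms keep $\psi\ge 0$; on the lateral side the term $\frac{|x|^2}{4r^2}=1$ dominates once $A\ge E\,\frac{y+\eps}{r}$ (valid since $\frac{y+\eps}{r}\le 3$ when $\ell+\eps\le r$); and on the top the linear term $A\,\frac{y+\eps}{r}$ beats the concave term $E\,\frac{(y+\eps)^2}{r^2}$, giving $\psi\ge 1$. Finally $\psi(x_j,y_j)$ is a bounded supermartingale for the cylinder walk, so the optional stopping theorem together with $\psi\ge F$ on the boundary yields
\[
1-P=v_\eps(0,\ell)=\mathbb E^{(0,\ell)}[F(x_{\tau'},y_{\tau'})]\le \mathbb E^{(0,\ell)}[\psi(x_{\tau'},y_{\tau'})]\le\psi(0,\ell)\le A\,\frac{\ell+\eps}{r},
\]
which is the claim with $L=A=4\max(E_0,1)$.

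The main obstacle is the design of the barrier rather than any single estimate. A naive separable or additive barrier fails: to make $\psi\ge 1$ on the whole lateral boundary one needs a term growing with $|x|$, but any such term is subharmonic in $x$ and hence tends to violate the supersolution inequality, while a purely superharmonic radial choice is forced to be large at the center by the minimum principle. The device of subtracting the concave term $E\,\frac{(y+\eps)^2}{r^2}$ exactly cancels the subharmonicity of $\frac{|x|^2}{4r^2}$, at the cost of shrinking $\psi$ near the top; the balance $E\ge E_0$ together with $A$ taken large enough keeps $\psi\ge 1$ on the top while still giving $\psi(0,\ell)\le A(\ell+\eps)/r$ at the starting point. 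Verifying that this single quadratic is \emph{simultaneously} a DPP-supersolution and admissible on all three boundary pieces, uniformly in small $\eps$, is the heart of the argument; the probabilistic conclusion via optional stopping is then immediate. (Equivalently, one could split $1-P\le\mathbb P(\text{top})+\mathbb P(\text{side})$, bounding the first by the exact martingale $y_j/(2r+\ell)$ and the second through the expected number of steps before vertical exit; but the single-barrier route above is the cleaner PDE-oriented proof.)
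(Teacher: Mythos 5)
Your proof is correct, but it takes a genuinely different route from the one in the notes. The notes keep the boundary payoff $1$ on the bottom and $0$ elsewhere, and compare the walk with a \emph{smooth} solution $u$ of the degenerate linear equation $\Delta_x u+(p-2)u_{yy}=0$ (built by anisotropically rescaling a fundamental solution of the Laplacian). Because that $u$ satisfies the one-step identity only up to an $O(\eps^3)$ error, the argument must correct $u(x_j,y_j)$ by $cj\eps^3$ to get a submartingale, and then separately prove $\E[\tau']\le C\eps^{-2}$ (via the exact martingale $y_j^2-\alpha j\eps^2$) so that the accumulated error $c\,\E[\tau']\,\eps^3=O(\eps)$ is harmless; the bound $1-P\le L(\ell+\eps)$ then comes from the Lipschitz continuity of $u$ near the bottom. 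You instead flip the payoff so that the value at $(0,\ell)$ \emph{is} $1-P$, and dominate it by the explicit quadratic
\[
\psi(x,y)=A\,\frac{y+\eps}{r}+\frac{|x|^2}{4r^2}-E\,\frac{(y+\eps)^2}{r^2},
\]
which — precisely because it is quadratic — satisfies the one-step inequality $T\psi\le\psi$ \emph{exactly} once $E\ge\frac{\beta n}{4\alpha(n+2)}$ (your computation $T\psi-\psi=\frac{\eps^2}{r^2}\bigl(\frac{\beta n}{4(n+2)}-\alpha E\bigr)$ checks out, using $\kint_{B_\eps(0)}|h|^2\,dh=\frac{n\eps^2}{n+2}$ and the identity $\frac12\bigl((a-\eps)^2+(a+\eps)^2\bigr)=a^2+\eps^2$). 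This removes both the Taylor-error bookkeeping and the expected-exit-time lemma, at the price of the three boundary verifications; the shift $y\mapsto y+\eps$ correctly absorbs the $\eps$-overshoot at the bottom, and reducing to $\ell+\eps\le r$ handles the trivial regime. Your constants on the top and lateral boundaries are stated a bit loosely (the relevant ratio $\frac{y+\eps}{r}$ can reach about $3+\eps/r$ rather than $3$, so one should take $A$ comfortably larger than $3E$, e.g.\ $A=4E$ with $E\ge 1$ as you do, and note the checks survive the overshoot), but these are harmless adjustments. Both proofs need $\alpha>0$, i.e.\ $p>2$: in the notes through the scaling $y/\sqrt{p-2}$, in yours through $E_0=\frac{\beta n}{4\alpha(n+2)}$. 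Your barrier argument is the more elementary and self-contained of the two; the notes' construction is closer in spirit to the PDE comparison machinery used elsewhere in the text.
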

\idea To estimate the escape probability through the bottom, we set the boundary value $1$ at the bottom and $0$ on the other parts of the boundary as in Figure~\ref{fig:cylinderwalk}, and estimate the expected value of the cylinder walk, cf.\ (\ref{eq:hitting-prob}). The cylinder walk reminds the usual random walk and in particular satisfies the dynamic programming principle of the form
\begin{align*}
u(x,y)=\beta \kint_{B_{\eps}(0)}u(x+h,y)\ud h+\frac{\a}{2}\{u(x,y-\eps)+u(x,y+\eps)\}.
\end{align*}
This reminds the usual mean value property for harmonic functions, and thus the value $\mathbb E[u(x_{\tau'},y_{\tau'})]$ of the cylinder walk is close to a modified harmonic function with the above boundary values.
%%%%%%%%%%%%%%%%%%%%%%%%%%%
\begin{proof}[Proof of Lemma~\ref{lemma:speed}.]
{\bf Step 1 (exit probability through the bottom):} Let us  outline the proof for Lemma \ref{lemma:speed} in $U:= B_1(0)\times (0,1)$ for simplicity. 
Assume for now that we have a smooth $u$ that satisfies the asymptotic expansion 
\begin{align*}
u(x,y)=\beta \kint_{B_\eps(0)} u(x+h,y) \ud h&+\frac{\alpha}{2}\{u(x,y+\eps)+u(x,y-\eps)\} + O(\varepsilon^3),
\end{align*}
with a uniform error $\abs{O(\varepsilon^3)}\le C\eps^3$, and has boundary value $1$ on $B_1(0)\times \{0\}$ and $0$ elsewhere.\footnote{When later giving explicit $u$, it is defined in a larger set to take technical issues with the discrete step size into account.}

Consider the sequence of random variables $u(x_j,y_j)$, $j=0,1,2,\ldots ,$ where
$(x_j,y_j)_{j\in\N}$ are the positions in the cylinder walk with $(x_0,y_0)=(0,\ell)$. 
Observe that the above asymptotic expansion makes 
$
M_j:=u(x_j,y_j)+cj\varepsilon^3
$ a submartingale (see (\ref{eq:key-mart}) for the notation):
\begin{align*}
&\mathbb{E}[u(x_j,y_j)+cj\varepsilon^3\,|\mathcal F_{j-1}] \nonumber
\\
&=\beta \kint_{B_\eps(0)} u(x_{j-1}+h,y_{j-1}) \ud h\\
&\hspace{5 em}+\frac{\alpha}{2}\{u(x_{j-1},y_{j-1}+\eps)+u(x_{j-1},y_{j-1}-\eps)\} +cj\varepsilon^3\\
&=u(x_{j-1},y_{j-1})-O(\eps^3)+cj\varepsilon^3\\
&\ge u(x_{j-1},y_{j-1})+c(j-1)\varepsilon^3,
\end{align*}
for suitable $c$.
Hence by the optional stopping theorem, we  deduce that
\begin{align*}
\mathbb E[M_{\tau'}]=\mathbb E[u(x_{\tau'},y_{\tau'})+c\tau' \eps^3]\ge M_0=u(0,\ell).
\end{align*}
Denote by $P$ the exit probability through $B_1(0)\times \{0\}$. Then by the above
\begin{align}
\label{eq:Lip-estimate}
u(0,\ell) -c\mathbb E[\tau']\eps^{3}\le  \mathbb E[u(x_{\tau'},y_{\tau'})]\le P\cdot 1+ (1-P)\cdot  0=P.
\end{align}

Next we show that  $0\le \E[ \tau']\leq C\varepsilon^{-2}$, cf.\ observation in Example \ref{ex:walk-laplace}.  This follows  from the fact that $M_j:=y^2_j-\alpha j\varepsilon^2$ is a martingale: We see the martingale property by using
$$
\frac{\alpha}{2}((  y_j+\varepsilon)^2+(  y_j-\varepsilon )^2)+\beta y_j^2 =  y_j^2+\alpha\varepsilon^2 .
$$
in the computation
\begin{align*}
\E[M_{j+1}\,|\, \mathcal F_{j}]&=\E[y^2_{j+1}-\alpha (j+1)\varepsilon^2\,|\, \mathcal F_{j}]\\
&=\alpha((  y_j+\varepsilon)^2+(  y_j-\varepsilon )^2)/2+\beta y_j^2-\alpha (j+1)\varepsilon^2\\
&=y_j^2+\alpha\varepsilon^2-\alpha (j+1)\varepsilon^2\\
&=y_j^2-\alpha j \varepsilon^2=M_j.
\end{align*}
By reverse Fatou's lemma  and the  optional stopping theorem
\begin{align*}
y_0^2=\E[M_0]= \limsup_{k\to \infty}\E[M_{\tau'\wedge k}]\le \E[M_{\tau'}] =\E[y^2_{\tau'}-\alpha \tau' \varepsilon^2],
\end{align*}
from which it follows that 
$$
y_0^2\le\E[(1+\eps)^{2}-\a \tau' \eps^{2}].
$$ 
By rearranging, we get 
\begin{align*}
 \E[ \tau']\le C\eps^{-2}
\end{align*}
as desired.
Combining this with (\ref{eq:Lip-estimate}), and recalling that $u$ is assumed to be smooth and thus Lipschitz continuous, we get
\begin{align*}
1-L(\ell+\eps) \le u(0,\ell) -c\mathbb E[\tau']\eps^{3}\le P
\end{align*}
Thus
\begin{align*}
1-P\le L(\ell+\eps), 
\end{align*}
which proves the claim once we find suitable $u$.
%%%%%%%%%%%%%%%%%%%%%%%%%%%%%%%%%%%%%%%%%%%%%%%%%%

{\bf Step 2 (finding $u$):} It remains to find a smooth $u$ satisfying the asymptotic expansion and taking suitable boundary values. Actually, we find a suitable solution $u\leq 1$  to a linear PDE 
$$
u_{x_1x_1}+\ldots+u_{x_n x_n}+(p-2)u_{yy}=0
$$
in a larger domain so that
\begin{align*}
u(x,-\eps)\approx& 1\quad\trm{when $x$ is near $0$},\\
u\leq& 0\quad  \text{on the other parts of the boundary.} 
\end{align*}
To construct a suitable solution, one can use a rescaled fundamental solution to the Laplace operator as explained later. 
Solution to the above PDE satisfies the desired asymptotic expansion: Consider the Taylor expansion
\[
u(x+h,y)\approx u(x,y)+\langle D u(x,y), h \rangle 
+\frac12 \langle D^2u(x,y)h , h\rangle 
\]
which implies by our earlier computations (see (\ref{exp.lapla}))
\begin{align*}
\kint_{B_{\eps}(0)}u(x+h,y)\ud h\approx u(x,y)+\frac{\eps^2}{2(n+2)}\sum_{i=1}^n u_{x_ix_i}(x,y).
\end{align*}
Moreover,
\[
u(x,y\pm\eps)\approx u(x,y)\pm u_y(x,y)\eps+\half  u_{yy}(x,y)\eps^2.
\]
We multiply the expansions by 
\begin{align*}
 \beta=\frac{n+2}{p+n} \quad \text{and} \quad \frac{\alpha}{2}=\frac{p-2}{2(p+n)}
\end{align*}
respectively, recall the error terms, sum up, and  get
\begin{align*}
\beta &\kint_{B_\eps(0)} u(x+h,y) \ud h+\frac{\alpha}{2}\{u(x,y+\eps)+u(x,y-\eps)\}\\
&=u(x,y)+\frac{\eps^2}{2(p+n)}\Big\{\sum_{i=1}^n u_{x_ix_i}(x,y) + u_{yy}(x,y) (p-2)\Big\}+O(\eps^3)\\
&=u(x,y)+O(\eps^3),
\end{align*}
since $u$ is a solution to the PDE.

A good enough explicit solution to the above PDE-problem can be obtained by scaling a harmonic function. 
Let $v$ be a harmonic function and denote $(x,y)=(x_1,\ldots,x_n,y)$. Then let $u(x,y)=v(x,y/\sqrt{p-2})=v(x,s)$ and thus
\begin{align*}
u_{yy}(x,y)&=\frac{v_{ss}(x,y/\sqrt{p-2})}{\sqrt{p-2}^{2}}.
\end{align*}
It follows that $u$ solves
\begin{align*}
u_{x_1x_1}&(x,y)+\ldots+u_{x_n x_n}(x,y)+(p-2)u_{yy}(x,y)\\
&=v_{x_1x_1}(x,s)+\ldots+v_{x_n x_n}(x,s)+(p-2)\frac{v_{ss}(x,y/\sqrt{p-2})}{\sqrt{p-2}^{2}}\\
&=v_{x_1x_1}(x,s)+\ldots+v_{x_n x_n}(x,s)+v_{ss}(x,s)\\
&=0.
\end{align*}
Thus a suitable function can be obtained by rescaling and translating a fundamental solution to the Laplace equation in such a way that the level set 1 passes through $(0,-\eps)$ and level set zero through the set $\partial B_1(0)\times \{0\}$.   
\end{proof}
The parabolic version is established in \cite{parviainenr16}.

\section{Mean value characterizations}

It is well known, and can be found in many PDE textbooks, that mean value property characterizes harmonic functions, i.e.\ roughly 
\begin{align*}
\Delta u=0\quad  \Leftrightarrow \quad u(x)=\kint_{B_{\eps}(x)} u \ud y.
\end{align*}
 In fact, we can relax
this condition by requiring that it holds asymptotically
$$
u(x) =
\frac{1}{|B_\varepsilon (x) |} \int_{B_\varepsilon (x) } u(y) \, dy + o(\varepsilon^2),
$$
as $\varepsilon\to 0$, by classical results of I. Privaloff (1925), W. Blaschke (1916), and S. Zaremba (1905).  
Interestingly a weak asymptotic mean value formula holds in the nonlinear case as well. The asymptotic mean value characterization was derived in \cite{manfredipr10}.
This section does not rely on games.

\subsection{Elliptic case}
Recall that 
\begin{equation}\label{alphaybeta}
\alpha=\frac{p-2}{p+n},\quad\text{and}\quad \beta=1-\alpha=\frac{2+n}{p+n}.
\end{equation}
In contrast to the previous sections, we assume $1<p\le \infty$, and we might also have negative $\alpha$. 

Recall that we defined viscosity solutions by touching with test functions such that $D\phi(x)\neq 0$ at a touching point (and we also require this when $p=\infty$).
 Also recall that by the remarks related to Definition~\ref{eq:def-normalized-visc}, which also hold when $1<p\le \infty$ by the references given there, the different definitions of viscosity solutions to $\Delta_p^N u=0$ or $\Delta_p u=0$, and $\Delta_\infty^N u=\abs{Du}^{-2}\langle D^{2}u Du,Du \rangle =0$ or $\Delta_{\infty} u=\langle D^{2}u Du,Du \rangle =0$ are equivalent. 

\begin{definition} \label{def.sol.viscosa.asymp}
A continuous function  $u$  satisfies
\begin{equation}\label{ref}
u(x) = \frac{\alpha}{2} \left\{ \max_{\overline{B}_\varepsilon
(x)} u +
\min_{\overline{B}_\varepsilon (x)} u
\right\} + \beta  \kint_{B_\varepsilon (x)}
u (y) \ud y+ o(\varepsilon^2), \quad \mbox{as }\varepsilon \to 0,
\end{equation}
\emph{in the viscosity sense} if
\begin{enumerate}
\item for every $ \phi\in C^{2}$ such that $\phi$ touches $u$ at the point $x \in \Omega$  from below and $D \phi(x)\neq 0$, we have
$$
0 \geq - \phi (x) + \frac{\alpha}{2} \left\{
\max_{\overline{B}_\varepsilon (x)} \phi  +
\min_{\overline{B}_\varepsilon (x)} \phi \right\} + \beta
\kint_{B_\varepsilon (x)} \phi (y) \ud y + o(\varepsilon^2).
$$

\item for every $ \phi \in C^{2}$ such that $\phi$ touches $u$ at the point $x \in \Omega$  from above and $D \phi(x)\neq 0$, we have
$$
0 \leq - \phi (x) + \frac{\alpha}{2} \left\{
\max_{\overline{B}_\varepsilon (x)} \phi  +
\min_{\overline{B}_\varepsilon (x)} \phi \right\} + \beta
\kint_{B_\varepsilon (x)} \phi (y) \ud y
 + o(\varepsilon^2).
$$
\end{enumerate}
\end{definition}

The error term satisfies $o(\eps^2)/\eps^2\to 0$ as $\eps\to 0$.
The main result of the section is the following. 
\begin{theorem}\label{main} Let $1< p \leq \infty$ and $u\in C(\Om)$.  The asymptotic expansion $$
\displaystyle u(x) = \frac{\alpha}{2} \left\{ \max_{\overline{B}_\varepsilon (x)} u +
\min_{\overline{B}_\varepsilon (x)} u
\right\} + \beta \kint_{B_\varepsilon (x)} u (y) \ud y
+ o(\varepsilon^2), \quad \mbox{as }\varepsilon \to 0,
$$ holds in the viscosity sense
if and only if
$$
\Delta_{p}^Nu (x) =0
$$
in the viscosity sense. 
\end{theorem}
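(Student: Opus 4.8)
The plan is to reduce the whole equivalence to one deterministic asymptotic expansion for $C^2$ test functions with non-vanishing gradient; once that expansion is in hand, both implications of Theorem~\ref{main} follow by reading off the two halves of the two definitions term by term.

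\textbf{Step 1: the governing expansion.} First I would prove that for every $\phi\in C^2$ near $x$ with $D\phi(x)\neq 0$ and every $1<p\le\infty$,
\[
\frac{\alpha}{2}\Big\{\max_{\overline B_\eps(x)}\phi+\min_{\overline B_\eps(x)}\phi\Big\}+\beta\kint_{B_\eps(x)}\phi(y)\ud y-\phi(x)=\frac{\alpha\eps^2}{2}\Delta_\infty^N\phi(x)+\frac{\beta\eps^2}{2(n+2)}\Delta\phi(x)+o(\eps^2),
\]
which for $p<\infty$ collapses, via $\alpha(n+2)/\beta=p-2$, to $\frac{\beta\eps^2}{2(n+2)}\Delta_p^N\phi(x)+o(\eps^2)$ (and for $p=\infty$, where $\beta=0$, to $\frac{\eps^2}{2}\Delta_\infty^N\phi(x)+o(\eps^2)$). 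The average term here is already supplied by the computation behind \eqref{exp.lapla}--\eqref{eq:taylor-mean}, contributing $\frac{\beta\eps^2}{2(n+2)}\Delta\phi(x)$.

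\textbf{Step 2: the min/max term.} The substance is the two-sided estimate $\max_{\overline B_\eps(x)}\phi+\min_{\overline B_\eps(x)}\phi-2\phi(x)=\eps^2\Delta_\infty^N\phi(x)+o(\eps^2)$. Here I would reuse the opposite-point trick from \eqref{eq:taylor-min-max}: writing $x_1^\eps$ for a minimizer and comparing $\phi(x_1^\eps)+\phi(2x-x_1^\eps)$ gives the lower bound already appearing in \eqref{eq:asymp-p-lap}, while the symmetric argument applied to a maximizer $x_2^\eps$ (comparing $\phi(x_2^\eps)+\phi(2x-x_2^\eps)$) gives the matching upper bound. Since $D\phi(x)\neq 0$, the linear term dominates for small $\eps$, so both $(x_1^\eps-x)/\eps\to -D\phi(x)/\abs{D\phi(x)}$ and $(x_2^\eps-x)/\eps\to D\phi(x)/\abs{D\phi(x)}$; hence both quadratic forms $\langle D^2\phi(x)(x_i^\eps-x),(x_i^\eps-x)\rangle$ equal $\eps^2\Delta_\infty^N\phi(x)+o(\eps^2)$, closing the sandwich. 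The argument never uses the sign of $\alpha$, so the range $p<2$ (negative $\alpha$) and the case $p=\infty$ cost nothing extra.

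\textbf{Step 3: matching the definitions.} Both Definition~\ref{def.sol.viscosa.asymp} and the viscosity definition of $\Delta_p^N u=0$ (Definition~\ref{eq:def-normalized-visc}, valid for $1<p\le\infty$ by the references cited there) quantify over exactly the same admissible test functions---those $\phi\in C^2$ touching $u$ at $x$ with $D\phi(x)\neq 0$, from below and from above. For such a $\phi$, Step 1 shows the mean value remainder equals a \emph{positive} multiple of $\eps^2$ times $\Delta_p^N\phi(x)$, up to $o(\eps^2)$. Dividing by $\eps^2$ and letting $\eps\to 0$, the lower-touching inequality in Definition~\ref{def.sol.viscosa.asymp}(1) is equivalent to $\Delta_p^N\phi(x)\le 0$, i.e.\ the supersolution condition, and the upper-touching inequality (2) is equivalent to $\Delta_p^N\phi(x)\ge 0$, the subsolution condition. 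Thus $u$ satisfies the asymptotic expansion in the viscosity sense if and only if $u$ is a viscosity solution of $\Delta_p^N u=0$, which is the claim.

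The main obstacle is precisely Step 2: upgrading the one-sided bound already recorded in \eqref{eq:asymp-p-lap} to a genuine two-sided expansion and, in doing so, identifying both the minimizing and the maximizing directions with $\mp D\phi(x)/\abs{D\phi(x)}$. This localization is the only place the hypothesis $D\phi(x)\neq 0$ is essential; everything downstream is bookkeeping with the constants $\alpha,\beta$.
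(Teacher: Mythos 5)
Your proposal is correct, and it rests on the same basic ingredients as the paper's proof (second-order Taylor expansion, the opposite-point trick at the extremal points, and the identification $(x_1^\eps-x)/\eps\to -D\phi(x)/\abs{D\phi(x)}$, $(x_2^\eps-x)/\eps\to D\phi(x)/\abs{D\phi(x)}$ when $D\phi(x)\neq 0$). The organizational difference is real, though: the paper only ever records the \emph{one-sided} inequality \eqref{eq:asymp-p-lap} (lower bound via the minimizer $x_1^\eps$), derives the reverse inequality via the maximizer separately when needed for the subsolution half, and must then treat $1<p<2$ as a distinct case because multiplying a one-sided bound by a negative $\alpha$ flips it. You instead close the sandwich first, upgrading the min/max term to the genuine two-sided identity
\[
\max_{\ol B_\eps(x)}\phi+\min_{\ol B_\eps(x)}\phi-2\phi(x)=\eps^2\,\Delta_\infty^N\phi(x)+o(\eps^2),
\]
valid whenever $D\phi(x)\neq 0$, and only then multiply by $\alpha$ and add the averaged expansion. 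Since an equality survives multiplication by $\alpha$ of either sign, the cases $1<p<2$, $2\le p<\infty$ and $p=\infty$, as well as the sub- and supersolution halves, all become a single uniform computation: the remainder in Definition~\ref{def.sol.viscosa.asymp} equals a positive multiple of $\eps^2\Delta_p^N\phi(x)$ up to $o(\eps^2)$, and the two definitions match test function by test function. This buys you a shorter, case-free argument at the price of having to justify both extremal-direction limits up front (which the paper also needs, just not simultaneously). The only caveat worth stating explicitly is that the two-sided identity fails without $D\phi(x)\neq 0$ — which is exactly why both definitions restrict to such test functions — and your write-up already flags this as the one place the hypothesis is essential.
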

%%%%%%%%%%%%%%%%%%%%%%%%%%%%
The proof is given after the next example.

%%%%%%%%%%%%%%%%%%%%%%%%%%%%
\begin{example}
\label{ex:aronsson}
One can show that the 'viscosity sense' is needed above. Indeed, there are solutions to the PDEs that do not satisfy the asymptotic expansion pointwise, only in the viscosity sense.

Set $p=\infty$ and consider Aronsson's function, Figure~\ref{fig:aronsson}, 
\[
u(x,y)= \abs{x}^{4/3}-\abs{y}^{4/3}
\]
near the point $(x,y)=(1,0)$. Aronsson's function is
infinity harmonic in the viscosity sense but it is not of class
$C^{2}$, see Aronsson \cite{aronsson67}. 
\begin{figure}[h]
\includegraphics[scale=0.3]{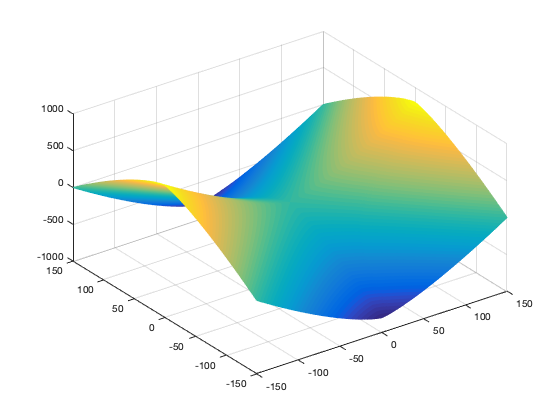}
\caption{Aronsson's function.}
\label{fig:aronsson}
\end{figure}

By Theorem
\ref{main},  $u$ satisfies
$$
u(x) = \frac{1}{2} \left\{ \max_{\overline{B}_\varepsilon
(x)} u +
\min_{\overline{B}_\varepsilon (x)} u
\right\} +   o(\varepsilon^2) \quad \mbox{as }\varepsilon \to 0,
$$
in the viscosity sense of Definition \ref{def.sol.viscosa.asymp}.
However, let us verify that the expansion does not hold in the
classical pointwise sense.

 Clearly, we have
\[
\max_{\overline{B}_\varepsilon (1,0)}
u=u(1+\eps,0)=(1+\eps)^{4/3}.
\]
To find the minimum, we set $x=\eps \cos(\theta)$, $y=\eps
\sin(\theta)$ and solve the equation
\[
\begin{split}
\frac{d}{d\theta}& u(1+ \eps \cos(\theta),\eps
\sin(\theta))\\
&=-\frac{4}{3}(1 + \eps \cos(\theta))^{1/3}\eps
\sin(\theta) - \frac{4}{3} (\eps \sin(\theta))^{1/3} \eps
\cos(\theta)\\
&=0.
\end{split}
\]
By symmetry, we can focus our attention on the solution
$$\theta_\eps=\arccos\left(\frac{\eps-\sqrt{4+\eps^2}}{2}\right).$$ Hence, we obtain
$$
\begin{array}{l}
\displaystyle \min_{\overline{B}_\varepsilon (1,0)} u \displaystyle
=u(1+\eps \cos(\theta_\eps),\eps \sin(\theta_\eps))\\
\qquad  \displaystyle  =
\left(1+\frac{1}{2}\eps\left(\eps-\sqrt{4+\eps^2}\right)\right)^{4/3}
-\left(\eps\sqrt{1-\frac{1}{4}\left(\eps-\sqrt{4+\eps^2}\right)^2}\right)^{4/3}.
\end{array}
$$
We are ready to compute
\[
\lim_{\eps\to 0+}\frac{ \frac{1}{2} \left\{\displaystyle
\max_{\overline{B}_\varepsilon (1,0)} u +
\min_{\overline{B}_\varepsilon (1,0)} u
\right\}-u(1,0)}{\eps^2}=\frac{1}{18}.
\]
But if the asymptotic expansion held in the classical sense, this
limit would have to be zero. 

Later it was observed in \cite{lindqvistm16} for a certain range of $p$ and in  \cite{arroyol16} for $1<p<\infty$ that mean value property holds directly to solutions themselves for $n=2$. To the best of my knowledge the situation is open for $1<p<\infty,\ p\neq 2$ and $n\ge 3$.
\end{example}

\begin{proof}[Proof of Theorem \ref{main}]

In Section \ref{sec:convergence-p},  we derived an asymptotic expansion by  combining asymptotic expansions to the infinity Laplacian ($p=\infty$) and the regular
Laplacian ($p=2$), and the following argument is quite similar to the argument there.
We recall some of the details for the convenience of the reader, and choose a point $x\in \Omega$ and a $C^2$-function  $\phi$. Let $x_1^\varepsilon$  and
$x_2^{\varepsilon}$ be the points where $\phi$ attains its
minimum and maximum  in $\overline{B}_\varepsilon(x)$
respectively; that is,
\[
\phi(x_1^\varepsilon)=\min_{y\in
\overline{B}_\varepsilon(x)}\phi(y)\quad \text{     and    } \quad \phi(x_2^\varepsilon)=\max_{y\in
\overline{B}_\varepsilon(x)}\phi(y).
\]

\hide{Consider the Taylor
expansion of the second order of $\phi$
\[
\phi(y)=\phi(x)+D\phi(x)\cdot(y-x)
+\frac12\langle D^2\phi(x)(y-x),(y-x)\rangle+o(|y-x|^2)
\]
as $|y-x|\rightarrow 0$. Evaluating this Taylor expansion of
$\phi$  at  the point $x$ with $y=x_1^\eps$ and $y=2x -x_1^\eps=\tilde{x}_1^{\varepsilon}$ , we get
\[
\phi(x_1^{\varepsilon})=\phi(x) + D\phi(x) (x_1^{\varepsilon}
- x) +\frac12 \langle
D^2\phi(x)(x_1^{\varepsilon}-x),(x_1^{\varepsilon}-x)\rangle+o(\varepsilon^2)
\]
and
\[
\phi(\tilde{x}_1^{\varepsilon})=\phi(x) - D\phi(x)
(x_1^{\varepsilon} - x) +\frac12 \langle
D^2\phi(x)(x_1^{\varepsilon}-x),(x_1^{\varepsilon}-x)\rangle+o(\varepsilon^2)
\]
as $\varepsilon\rightarrow0$.
Adding the expressions, we obtain
\[
\phi(\tilde{x}_1^{\varepsilon}) + \phi(x_1^{\varepsilon})- 2
\phi(x) = \langle
D^2\phi(x)(x_1^{\varepsilon}-x),(x_1^{\varepsilon}-x)\rangle+o(\varepsilon^2).
\]
Since $x_1^{\varepsilon}$ is the point where the minimum of $\phi$ is
attained, it follows that
\[
\phi(\tilde{x}_1^{\varepsilon}) + \phi(x_1^{\varepsilon})- 2
\phi(x) \le \max_{y\in \overline{B}_\varepsilon (x)} \phi (y) +
\min_{y\in \overline{B}_\varepsilon(x)} \phi (y) - 2 \phi(x),
\]
and thus
\begin{equation}
\label{eq:taylor-min-max}
\begin{split}
\half\Bigg\{\max_{y\in \overline{B}_\varepsilon (x)} \phi (y) &+
\min_{y\in \overline{B}_\varepsilon(x)} \phi (y)\Bigg\} - \phi(x)
\\&
\geq\half \langle
D^2\phi(x)(x_1^{\varepsilon}-x),(x_1^{\varepsilon}-x)\rangle+o(\varepsilon^2).
\end{split}
\end{equation}
Repeating the same process at the point $x_{2}^{\varepsilon}$ we get instead
\begin{equation}
\label{eq:taylor-min-max-2}
\begin{split}
\half\Bigg\{\max_{y\in \overline{B}_\varepsilon (x)} \phi (y) &+
\min_{y\in \overline{B}_\varepsilon(x)} \phi (y)\Bigg\} - \phi(x)
\\&
\leq\half \langle
D^2\phi(x)(x_2^{\varepsilon}-x),(x_2^{\varepsilon}-x)\rangle+o(\varepsilon^2).
\end{split}
\end{equation}

Next we derive a counterpart for the expansion with the usual
Laplacian ($p=2$). Averaging both sides of the classical Taylor
expansion  of $\phi$ at $x$ we get
$$
\displaystyle \kint_{B_\varepsilon (x)} \phi (y) \ud y=
\displaystyle  \phi(x)+ \sum_{i,j=1}^n \phi_{x_ix_i} (x) \kint_{B_\eps (0)} \frac12
z_i z_j \ud z+o(\varepsilon^2).
$$

The  values of the integrals in the sum above are zero when
$i\neq j$. Using symmetry, we compute
\[
\begin{split}
&\kint_{B_{\eps}(0)}z_i^2 \ud
z=\frac{1}{n}\kint_{B_{\eps}(0)}\abs{z}^2 \ud
z\\
&=\frac{1}{n\omega_n\eps^n}\int_0^\eps \int_{\partial B_\rho}
\rho^2 \ud S \ud \rho
=\frac{\sigma_{n-1}\eps^2}{n(n+2)\omega_n}=\frac{\eps^2}{(n+2)},
\end{split}
\]
with the notation introduced after \eqref{exp.infty}. We end up
with
\begin{equation}
\label{eq:taylor-mean} \kint_{B_\varepsilon (x)} \phi
(y)
\ud y-\phi(x) =\frac{\eps^2}{2(n+2)}\Delta
\phi(x)+o(\varepsilon^2).
\end{equation}
}

Assume for the moment that  $2\le p<\infty$ so that   $0\le \a <1$. In (\ref{eq:asymp-p-lap}), we obtained
\begin{equation}
\label{eq:asymp-p-lap-visc}
\begin{split}
&\frac{\alpha}{2}\left\{\max_{y\in \overline{B}_\varepsilon (x)}
\phi (y) + \min_{y\in \overline{B}_\varepsilon(x)} \phi
(y)\right\}+ \beta\kint_{B_\varepsilon (x)} \phi (y)
\ud y -
\phi(x)\\
&\ge \frac{\beta\eps^2}{2(n+2)}\left((p-2)\Big\langle
D^2\phi(x)\left(\frac{x_1^{\varepsilon}-x}{\varepsilon}\right),
\left(\frac{x_1^{\varepsilon}-x}{\varepsilon}\right)\Big\rangle
+
\Delta \phi(x)\right)\\
& \quad +o(\varepsilon^2).
\end{split}
\end{equation}

We are ready to  prove that if the asymptotic mean value formula
holds for $u$, then $u$ is a viscosity solution.  Suppose that function $u$ satisfies the asymptotic expansion
in the viscosity sense according to
Definition~\ref{def.sol.viscosa.asymp}. Then for a test function $\phi$ touching $u$ from below at $x$, we obtain
\[
\begin{split}
 0\geq-\phi (x)&+
\frac{\alpha}{2} \left\{ \max_{\overline{B}_\varepsilon (x)} \phi
+ \min_{\overline{B}_\varepsilon (x)} \phi \right\}+ \beta
\kint_{B_\varepsilon (x)} \phi (y)\ud y + o(\varepsilon^2).
\end{split}
\]
Thus, by \eqref{eq:asymp-p-lap},
\begin{align*}
o(\varepsilon^2)\ge \frac{\beta\eps^2}{2(n+2)}\left((p-2)\Big\langle
D^2\phi(x)\left(\frac{x_1^{\varepsilon}-x}{\varepsilon}\right),
\left(\frac{x_1^{\varepsilon}-x}{\varepsilon}\right)\Big\rangle
+
\Delta \phi(x)\right).
\end{align*}
Thus by dividing the previous expansion by $\eps^2$, and passing to the limit $\varepsilon\to 0$,
we get
\[
\begin{split}
0 &\geq
  \frac{\beta}{2(n+2)}\left((p-2) \Delta_{\infty}^N\phi(x)
+\Delta \phi (x)\right).
\end{split}
\]

In the case $p=\infty$, recall that the derivation of (\ref{eq:asymp-p-lap}) contained
\begin{align}
\label{eq:taylor-min-max-extract}
\half\Bigg\{\max_{y\in \overline{B}_\varepsilon (x)} \phi (y) &+
\min_{y\in \overline{B}_\varepsilon(x)} \phi (y)\Bigg\} - \phi(x)
\\
&
\geq\frac{\eps^2}{2} \left\langle
D^2\phi(x)\left(\frac{x_1^{\varepsilon}-x}{\varepsilon}\right),\left(\frac{x_1^{\varepsilon}-x}{\varepsilon}\right)\right\rangle+o(\varepsilon^2),\nonumber
\end{align}
and the proof is similar as above.

To prove that
$u$ is a viscosity subsolution, we first derive a reverse
inequality to \eqref{eq:asymp-p-lap} by considering the maximum
point $x_2^{\eps}$ of  a test function $\phi$ that
touches $u$ from above. We omit the details.

To prove the converse implication, assume that $u$ is a viscosity
solution. In particular $u$ is a subsolution. Thus for a test function $\phi$ touching $u$ from above
at $x \in \Omega$, $D\phi(x)\neq 0$, it holds
\begin{align*}
(p-2)\Delta_\infty^N \phi(x)+\Delta \phi(x)\ge
0.
\end{align*}
We divide
\eqref{eq:asymp-p-lap} by $\eps^2$, pass to a limit, and use the above subsolution property to deduce
$$
\liminf_{\varepsilon \to 0+} \displaystyle\frac{1}{\eps^2}\left(
\displaystyle- \phi(x) + \frac{\alpha}{2} \left\{
\max_{\overline{B}_\varepsilon (x)} \phi +
\min_{\overline{B}_\varepsilon (x)} \phi \right\} + \beta
\intav_{B_\varepsilon (x)} \phi(y) \ud y
 \right)\geq 0.
$$
The  case $p=\infty$ is analogous. 

Finally, we need to address the case $1< p<2$. Since $\alpha< 0$
we use
maximum point $x_2^{\eps}$ instead of the minimum point $x_{1}^{\eps}$ to get
a version of  (\ref{eq:taylor-min-max-extract}) with $x_{2}^{\varepsilon}$ in
the place  of $x_{1}^{\varepsilon}$, and the inequality reversed. Then multiplying the expansion with $\alpha<0$ before summing it up with the $p=2$ expansion reverses the inequality once more. Thus we get \eqref{eq:asymp-p-lap} except $x_{1}^{\varepsilon}$  is replaced by $x_{2}^{\varepsilon}$. The argument then continues in
the same way as before. 
 \end{proof}

\subsection{Parabolic case}

The parabolic asymptotic mean value characterization for $1< p\le \infty$ was derived in
 \cite{manfredipr10c}. 
\begin{theorem}\label{thm:parab-main-p-lap}
Let $1<p\leq \infty$ and $u\in C(\Om_T)$.
 The asymptotic mean value formula
\begin{equation}
\label{eq:nonintegrated-form-of-p-mvt}
\begin{split}
u(x,t)&= \frac{\alpha}{2}
\left\{ \max_{y\in \ol B_\eps (x)} u\Big(y,t-\frac{\eps^2}{2}\Big) +
\min_{y\in \ol B_\eps (x)} u\Big(y,t-\frac{\eps^2}{2}\Big) \right\}
\\
&\hspace{1 em}+\beta \kint_{B_\eps (x)} u\Big(y,t-\frac{\eps^2}{2}\Big)
\ud y  +o(\eps^2), \quad \mbox{as }\eps \to 0.
\end{split}
\end{equation}
holds for every $(x,t)\in \Om_T$ in the viscosity sense
if and only if $u$ is a viscosity solution to
$$
(n+p)u_t (x,t)=\Delta_p^{N} u (x,t).
$$
\end{theorem}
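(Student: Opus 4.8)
The plan is to mirror the proof of the elliptic characterization (Theorem~\ref{main}), replacing the elliptic asymptotic expansion by its parabolic counterpart. The engine of the whole argument is the combined Taylor expansion \eqref{eq:parab-taylor-comb-pointwise} already derived in the proof of Theorem~\ref{thm:sol-to-p-parabolic2}: for a $C^2$ test function $\phi$ and the minimizing point $x_1^{\eps,t-\eps^2/2}\in\ol B_\eps(x)$, the difference between the right-hand side of \eqref{eq:nonintegrated-form-of-p-mvt} (with $u$ replaced by $\phi$) and $\phi(x,t)$ is bounded below by $\frac{\beta\eps^2}{2(n+2)}\big((p-2)\langle D^2\phi\,\xi_\eps,\xi_\eps\rangle+\Delta\phi-(n+p)\phi_t\big)+o(\eps^2)$, where $\xi_\eps=(x_1^{\eps,t-\eps^2/2}-x)/\eps$. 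The crucial new term $-(n+p)\phi_t$ is exactly the contribution of the time shift $t\mapsto t-\eps^2/2$: since that shift is independent of the spatial variable, it subtracts $\phi_t\eps^2/2$ from each of the three averaging operations, and the weights $\tfrac{\alpha}{2},\tfrac{\alpha}{2},\beta$ sum to $1$. First I would record this expansion together with the reverse inequality obtained by using the maximizing point $x_2^{\eps,t-\eps^2/2}$ instead.

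For the forward implication (mean value formula in the viscosity sense $\Rightarrow$ PDE), I take $\phi\in C^2$ touching $u$ from below at $(x,t)$ with $D\phi(x,t)\neq0$. The viscosity-sense hypothesis then reads $0\ge -\phi(x,t)+R_\eps(\phi)+o(\eps^2)$, where $R_\eps(\phi)$ denotes the right-hand side of \eqref{eq:nonintegrated-form-of-p-mvt} evaluated at $\phi$; combining this with the lower bound of the previous paragraph, dividing by $\eps^2$ and letting $\eps\to0$ — using that $\xi_\eps\to-D\phi/|D\phi|$ and hence $\langle D^2\phi\,\xi_\eps,\xi_\eps\rangle\to\Delta_\infty^N\phi$ — yields $0\ge\frac{\beta}{2(n+2)}\big((p-2)\Delta_\infty^N\phi+\Delta\phi-(n+p)\phi_t\big)$, i.e.\ $(n+p)\phi_t\ge\Delta_p^N\phi$. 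This is precisely the supersolution inequality of Definition~\ref{eq:viscosity-sol-par}. The subsolution inequality follows symmetrically from the reverse expansion and a test function touching from above.

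For the converse (PDE $\Rightarrow$ mean value formula), I assume $u$ is a viscosity solution and test with $\phi$ touching $u$, say, from above at $(x,t)$ with $D\phi(x,t)\neq0$. The subsolution property gives $(p-2)\Delta_\infty^N\phi+\Delta\phi-(n+p)\phi_t\ge0$; plugging this into the reverse expansion and taking $\liminf_{\eps\to0}\eps^{-2}(\cdots)$ produces the one-sided inequality required by the parabolic analogue of Definition~\ref{def.sol.viscosa.asymp}, and the other half follows from the supersolution property.

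The delicate point — and the one genuinely new compared with the elliptic case — is the degenerate direction $D\phi(x,t)=0$, which cannot simply be discarded in the parabolic setting. Here I would invoke Theorem~\ref{thm:test-functions-neglect}, which allows me to restrict to test functions with $D\phi(x,t)=0$ \emph{and} $D^2\phi(x,t)=0$. For such $\phi$ both $\Delta\phi$ and the Hessian term $\langle D^2\phi\,\xi_\eps,\xi_\eps\rangle$ vanish as $\eps\to0$, so after dividing by $\eps^2$ the expansion collapses to $0\ge-\frac{\beta}{2(n+2)}(n+p)\phi_t+o(1)$, giving exactly $\phi_t\ge0$ (resp.\ $\le0$), the remaining clause of the viscosity definition. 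Finally the endpoints are handled as in Theorem~\ref{main}: for $p=\infty$ one has $\beta=0$ and only the min/max term survives, so I would work directly with the infinity-Laplacian expansion; for $1<p<2$ the coefficient $\alpha$ is negative, so I would use the maximizing point $x_2^{\eps,t-\eps^2/2}$ (with the inequality reversed) and note that multiplying by $\alpha<0$ reverses it once more, recovering the same expansion. The main obstacle throughout is bookkeeping the time-shift term consistently across the three averaging operations and carrying out the vanishing-gradient reduction; the spatial parts are identical to the elliptic computation.
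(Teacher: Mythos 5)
Your proposal is correct and follows essentially the same route as the paper: the combined parabolic Taylor expansion \eqref{eq:parab-taylor-comb-pointwise} together with its reverse at the maximizing point, the reduction of the degenerate case $D\phi(x,t)=0$ to $D^2\phi(x,t)=0$ via Theorem~\ref{thm:test-functions-neglect} (where the expansion collapses to $-\phi_t(x,t)\eps^2/2+o(\eps^2)$), and the separate handling of $p=\infty$ and $1<p<2$ exactly as in the elliptic Theorem~\ref{main}. No gaps.
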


%%%%%%%%%%%%%%%%%
%%%%%%%%%%
\begin{remark}\label{rem:reduction}
Recall that by Theorem \ref{thm:test-functions-neglect} we can reduce the number of test functions in the definition of viscosity solutions:
Either  $D\phi(x,t)\neq 0$, or  $D\phi(x,t)=0$ and $D^2\phi(x,t)=0$. We use this test function class when we say in the 'viscosity sense' above both with the asymptotic mean value property and with the equation.
In particular, the definition can be rewritten as
\[
\begin{split}
\begin{cases}
 (n+p)\phi_t(x,t)-\Delta_p^N \phi(x,t) \ge 0,& \text{ if }D\phi(x,t)\neq 0,\\
 (n+p)\phi_t(x,t)\ge 0&  \text{ if }D\phi(x,t)=0 \text{ and } D^2\phi(x,t)=0,
\end{cases}
\end{split}
\] 
when touching from below, and analogously from above. This also holds for $p=\infty$ by \cite{juutinenk06}, and for $1<p<2$ by \cite{manfredipr10c}. In the case $p=\infty$, we mean the equation
\begin{align*}
u_t=\Delta_{\infty}^N u.
\end{align*}
\end{remark}
%%%%%%%%%%
%%%%%%%%%%
\begin{remark}
A similar proof that will be used to prove Theorem  \ref{thm:parab-main-p-lap} also gives a characterization of viscosity solutions if
\[
\begin{split}
u(x,t)&= \frac{\alpha}{2}
\aveint{t-\eps^2}{t}\left\{ \max_{y\in \ol B_\eps (x)}
u(y,s) + \min_{y\in \ol B_\eps (x)} u(y,s) \right\} \ud s
\\
&\hspace{4 em}+\beta \aveint{t-\eps^2}{t}\kint_{B_\eps (x)} u(y,s)
\ud y \ud s +o(\eps^2), \quad \textrm{as} \quad \eps \to 0.
\end{split}
\]

If $p=2$, this gives that
\[
\begin{split}
u(x,t)&= \beta \aveint{t-\eps^2}{t}\kint_{B_\eps (x)} u(y,s)
\ud y \ud s +o(\eps^2), \quad \textrm{as} \quad \eps \to 0,
\end{split}
\]
is equivalent to 
\begin{align*}
(n+2)u_t=\Delta u,
\end{align*}
i.e.\ we get an asymptotic  mean value characterization to the solutions of the (rescaled) heat equation as well. This can be compared to the classical mean value formula for $u_t=\Delta u$ that reads as 
 \[
 \begin{split}
 u(x,t)=\frac{1}{4r^n}\int_{E(x,t,r)} u(y,s) \frac{\abs{x-y}^2}{(t-s)^2} \ud y \ud s,
 \end{split}
 \]
 see \cite[Section 2.3.2]{evans10}.
 Here $E(x,t,r)$ denotes heat balls as in Figure~\ref{fig:heatballs} below. They are super level sets for the fundamental solution $\Phi$ with the time direction reversed
 \begin{align*}
E(0,0,r):=\{ (y,s)\in \R^{n+1} \,:\,\Phi(y,-s)\ge\frac{1}{r^n} \}.
\end{align*}
\begin{figure}[h]
\includegraphics[scale=0.3]{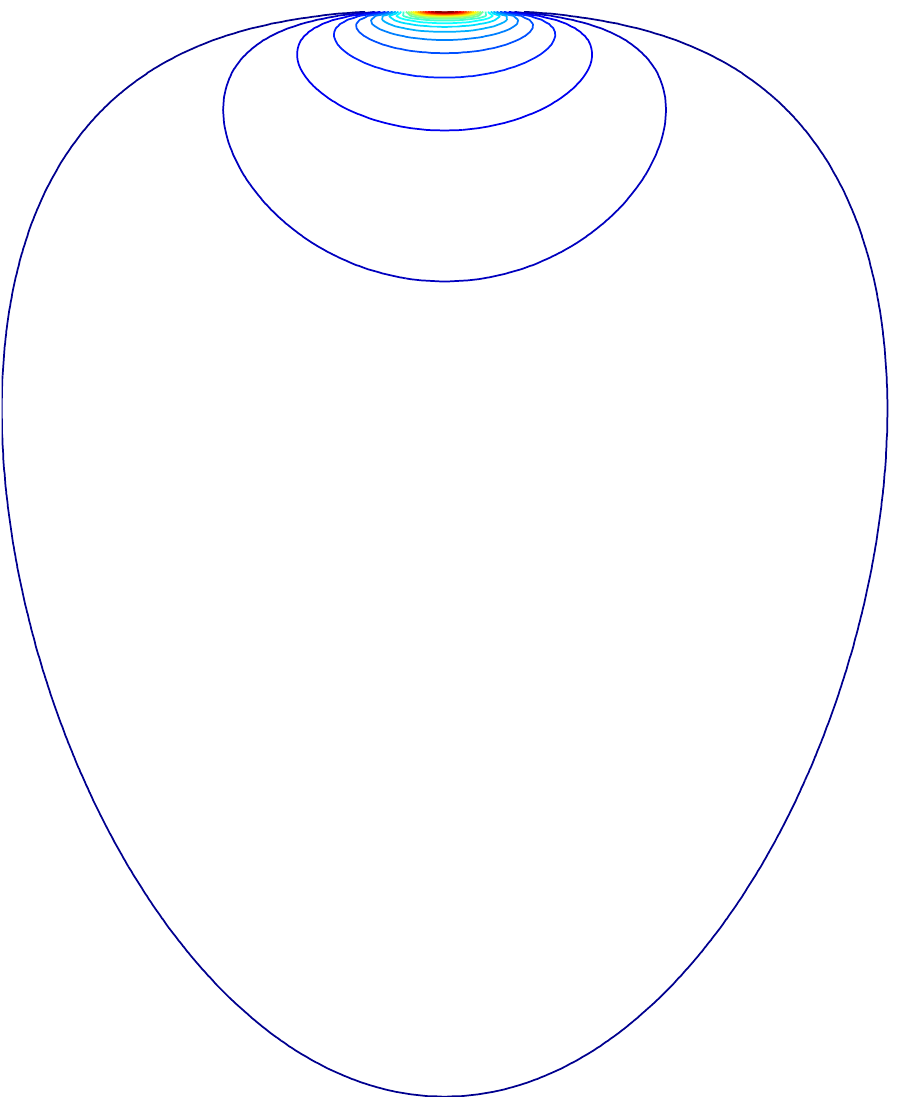}
\caption{Heat balls.}
\label{fig:heatballs}
\end{figure}
\end{remark}
%%%%%%%%%%
 
 Observe that in the following proposition viscosity sense is not used. According to the classical results, a solution to the heat
equation is smooth.
\begin{proposition}\label{thm:heat} Let $u\in C^{\infty}(\Om\times(0,T))$. The asymptotic mean value formula
$$
\label{eq:heatmean}
u(x,t)= \kint_{B_{\eps}(y)}
u\left(y,t-\frac{\eps^2}{2}\right) \ud y +o(\eps^2),
\quad \mbox{as }\eps \to 0.$$
holds for all $(x,t) \in \Om_T$ if and only if
$$
(n+2)u_t (x,t) =\Delta u (x,t)
$$
in $\Omega_T$.
\end{proposition}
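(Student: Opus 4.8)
The plan is to reduce the statement to a single second-order Taylor expansion in space combined with a first-order expansion in time, exactly as in the parabolic heuristic following Theorem~\ref{thm:sol-to-p-parabolic}, but now making the remainder rigorous by using that $u\in C^\infty$. First I would fix $(x,t)\in\Om_T$, write $y=x+h$ with $h\in B_\eps(0)$, and expand
\[
u\Big(x+h,t-\frac{\eps^2}{2}\Big)=u(x,t)+\langle Du(x,t),h\rangle-\frac{\eps^2}{2}u_t(x,t)+\frac12\langle D^2u(x,t)h,h\rangle+R_\eps(h),
\]
where, since $u$ is smooth, the terms collected in $R_\eps(h)$ (the cubic spatial remainder, the mixed space--time term, and the purely temporal second-order term) satisfy $R_\eps(h)=O(|h|^3+|h|\eps^2+\eps^4)$. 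For $|h|\le\eps$ this is $O(\eps^3)=o(\eps^2)$ uniformly in $h$.

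Next I would average over $h\in B_\eps(0)$. By symmetry $\kint_{B_\eps(0)}\langle Du(x,t),h\rangle\,\ud h=0$; the time term $-\frac{\eps^2}{2}u_t(x,t)$ is constant in $h$; and the Hessian term is treated exactly by the computation leading to \eqref{exp.lapla}, which gives $\frac12\kint_{B_\eps(0)}\langle D^2u(x,t)h,h\rangle\,\ud h=\frac{\eps^2}{2(n+2)}\Delta u(x,t)$. Collecting these contributions yields the key identity
\[
\kint_{B_\eps(x)}u\Big(y,t-\frac{\eps^2}{2}\Big)\,\ud y-u(x,t)=\frac{\eps^2}{2(n+2)}\big(\Delta u(x,t)-(n+2)u_t(x,t)\big)+o(\eps^2).
\]

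With this identity both implications are immediate. If the asymptotic mean value formula holds, then its left-hand side is $o(\eps^2)$, so dividing by $\eps^2$ and letting $\eps\to0$ forces $\Delta u(x,t)-(n+2)u_t(x,t)=0$, i.e.\ $(n+2)u_t=\Delta u$ at $(x,t)$. Conversely, if $(n+2)u_t=\Delta u$ there, the bracketed term vanishes and the identity collapses to the mean value formula with error $o(\eps^2)$. Since $(x,t)\in\Om_T$ was arbitrary, this proves both directions.

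The computation is routine, and the only step requiring genuine care---and the sole place where the $C^\infty$ hypothesis enters---is verifying that the full Taylor remainder, in particular the mixed space--time contribution $-\frac{\eps^2}{2}\langle Du_t(x,t),h\rangle$ and the term of order $\eps^4$ coming from $u_{tt}$, is genuinely $o(\eps^2)$ once $s=\eps^2/2$ is chosen. This smoothness is precisely what lets us bypass the viscosity-sense formulation that is unavoidable in Theorem~\ref{thm:parab-main-p-lap}, where $u$ is merely continuous and the expansion must instead be tested against $C^2$ functions touching $u$ from above and below.
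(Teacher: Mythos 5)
Your proposal is correct and follows essentially the same route as the paper: a second-order Taylor expansion in space and first-order in time with $s=\eps^2/2$, averaging over $B_\eps(0)$ to produce the identity $\kint_{B_\eps(x)}u(y,t-\eps^2/2)\,\ud y-u(x,t)=\frac{\eps^2}{2(n+2)}(\Delta u-(n+2)u_t)+o(\eps^2)$, from which both implications are immediate. Your extra bookkeeping of the remainder terms only makes explicit what the paper's $o(\abs{h}^2+\abs{s})$ already encodes.
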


\begin{proof} 
 We use the
Taylor expansion in a similar fashion as what was sketched in Section \ref{sec:intro-time-dependent}. Since $u$ is smooth, we get
\begin{align*}
u(x+h,t-s)&=u(x,t)+ \langle  Du(x,t), h \rangle+
\half\langle D^2u(x,t) h,h\rangle\\
&\hspace{1 em}-u_t(x,t)
s+o(\abs{h}^2+\abs{s}).
\end{align*}
Choosing $s=\frac{\eps^2}{2}$ and averaging in space, we get similarly as in (\ref{exp.lapla})
\begin{equation}
\label{eq:parab-taylor-mean}
\begin{split}
&\kint_{B_{\eps}(0)}u(x+h,t-\frac{\eps^2}{2})\ud h \\
&=u(x,t) + \frac{\eps^2}{2(n+2)}\left(\Delta
u(x,t)-(n+2)u_t(x,t)\right) +o(\eps^2).
\end{split}
\end{equation}
If smooth $u$ is a solution to $(n+2)u_t=\Delta u$, then
\eqref{eq:parab-taylor-mean} immediately implies that $u$ satisfies the asymptotic mean value
property, and vice versa. 
\end{proof}

%%%%%%%%%%%%%%%%%%%%%%%
%%%%%%%%%%

\begin{proof}[Proof of Theorem \ref{thm:parab-main-p-lap}]
Proof is quite similar to the proof of Theorem \ref{thm:sol-to-p-parabolic2}, but for the convenience of the reader we recall some of the details. Denoting 
\[
\begin{split}
\phi\Big(x_1^{\eps,t-\eps^2/2},t-\frac{\eps^2}{2}\Big)=
\min_{y\in \ol B_\eps(x)} \phi\Big(y,t-\frac{\eps^2}{2}\Big),
\end{split}
\]
the expansion (\ref{eq:parab-taylor-comb-pointwise}) gives us
\begin{align*}
&\beta \kint_{B_{\eps}(x)}\phi(y,t-\frac{\eps^2}{2})\ud y +\frac{\alpha}{2}
\Bigg\{\max_{y\in \ol B_\eps (x)}  \phi (y,t-\frac{\eps^2}{2}) +
\min_{y\in \ol B_\eps(x)}  \phi (y,t-\frac{\eps^2}{2})\Bigg\} \\
&\ge  \phi(x,t)+ \frac{\beta \eps^2}{2(n+2)}\Bigg\{\Delta
\phi(x,t)-(n+p)\phi_t(x,t) \\
&\hspace{5 em}+(p-2)\left\langle
D^2 \phi(x,t)\frac{x_1^{\eps,t-\eps^2/2}-x}{\eps},\frac{x_1^{\eps,t-\eps^2/2}-x}{\eps}\right\rangle\Bigg\}+o(\eps^2)
\end{align*}
in the case $2\le p< \infty$. In the case $p=\infty$, we have
\begin{align*}
&\frac{1}{2} \Bigg\{\max_{y\in \ol B_\eps (x)}  \phi (y,t-\frac{\eps^2}{2}) +
\min_{y\in \ol B_\eps(x)}  \phi (y,t-\frac{\eps^2}{2})\Bigg\} - \phi(x,t)\\
&\ge \frac{\eps^2}{2}\Bigg\{\left\langle
D^2 \phi(x,t)\frac{x_1^{\eps,t-\eps^2/2}-x}{\eps},\frac{x_1^{\eps,t-\eps^2/2}-x}{\eps}\right\rangle-\phi_t(x,t)\Bigg\}+o(\eps^2).
\end{align*} 
Then if $D\phi(x,t)\neq 0$, dividing the above expansion by $\eps^2$, and utilizing 
\[
\begin{split}
\left\langle
D^2 u(x,t)\frac{x_1^{\eps,t-\eps^2/2}-x}{\eps},\frac{x_1^{\eps,t-\eps^2/2}-x}{\eps}\right\rangle\to \Delta_\infty^{N}u(x,t),
\end{split}
\]
as $\eps\to 0$, we see  by the similar argument as in of the proof of Theorem \ref{main} in the elliptic case that the viscosity solution property implies the asymptotic mean value property, and vice versa. Also the case $1<p<2$ can be treated in the same way as in the elliptic case. 

Let $\phi$ touch
$u$ at $(x,t)$ from below and assume $D \phi(x,t)=0$. According to Remark \ref{rem:reduction}, we may also assume that
$D^2\phi(x,t)=0$, and thus the Taylor expansion reads as 
\[
\begin{split}
\phi(y,t-\frac{\eps^2}{2})-\phi(x,t)=-\phi_t(x,t)\frac{\eps^2}{2}+o(\eps^2)
\end{split}
\]
for $y\in \ol B_{\eps}(x)$.
It follows that
\[
\begin{split}
 &\beta \kint_{B_{\eps}(x)}\Big(\phi(y,t-\frac{\eps^2}{2})-\phi(x,t)\Big)\ud y\\
&+\frac{\alpha}{2}
\Bigg\{\max_{y\in \ol B_\eps (x)}  \Big(\phi (y,t-\frac{\eps^2}{2}) -\phi(x,t)\Big)+
\min_{y\in \ol B_\eps(x)}  \Big(\phi (y,t-\frac{\eps^2}{2})-\phi(x,t)\Big)\Bigg\} \\
&=-\phi_t(x,t)\frac{\eps^2}{2} +o(\eps^2).
\end{split}
\]
Dividing by $\eps^2$, and passing to the limit, we observe that the asymptotic mean value formula holds  in the viscosity sense if and only if the equation holds in the viscosity sense.
\end{proof}

%%%%%%%%%%%%%%%%%%%%%%%%%%%%%%%

\section{Further regularity methods}
\label{sec:other-methods}

We already saw the cancellation method in Section \ref{sec:Lip-regularity} for the asymptotic Lipschitz regularity of value functions. It is a nontrivial task to extend that method to a more general class of problems where the sharp cancellation used in the proof breaks down. In this section, we review some regularity methods applicable with more general assumptions.

 \subsection{Couplings of dynamic programming principles}
\label{sec:intuition}
This method for $p$-Laplacian was first devised in \cite{luirop18} and then in the parabolic case in \cite{parviainenr16}. 
Although the proofs in the above papers are mainly written without game methods, the intuition comes from the stochastic games. 
We illustrate the method in the case of the pure tug-of-war and pure random walk, but the method also works for example to the games with space dependent probabilities and to the games corresponding to the full range $1<p\le \infty$.

\noindent {\bf Tug-of-war:} Let
\begin{align*}
u_\varepsilon(x)=\frac{1}{2}\sup_{B_\varepsilon(x)}u_\varepsilon+\frac{1}{2}\inf_{B_\varepsilon(x)}u_\varepsilon.
\end{align*}
Then
 $$
 G(x,z):=u_{\eps}(x)-u_{\eps}(z)
 $$ can be written as a solution of a certain  DPP in $\R^{2n}$:
 \begin{align}\label{eq:DPP-2}
				G(x,z)=u_\eps(x)-u_\eps(z)&=\frac{1}{2}\,\big(\,\sup_{B_\eps(x)}u_\eps+\inf_{B_\eps(x)}u_{\eps}-\sup_{B_\eps(z)}u_\eps-\inf_{B_\eps(z)}u_{\eps}\,\big)\nonumber \\
				&=\frac{1}{2}\sup_{B_{\eps}(x)\times B_{\eps}(z)}G\,+\,\frac{1}{2}\inf_{B_{\eps}(x)\times B_{\eps}(z)}G\,.
\end{align}
\idea In this way the question about the regularity of $u_{\eps}$
is converted into a question about the \text{absolute size} of a solution $G$ of \eqref{eq:DPP-2} in $\Omega\times\Omega\subset \R^{2n}$.

Next we explain the idea of getting a bound for $|G(x,z)|$ via a stochastic game in $\R^{2n}$. We utilize the observation that $G=0$ in the diagonal set $$T:\,=\{(x,z): x=z\}.$$
The rules of the game are as follows: two game tokens are placed in $\Omega$. Two players play the game so that at each turn the players have an equal chance to win the turn. If a player wins the turn when the game tokens are at $x_k$ and $z_k$ respectively, he can move the game token at $x_k$ to any point in $B_{\eps}(x_k)$ and the game token at $z_k$ anywhere in $B_{\eps}(z_k)$. We use the following stopping rules and payoffs\sloppy
\begin{enumerate}
\item game tokens have the same position, and payoff is zero
\item one of the game tokens is placed outside $B_1\subset \Omega$, and  payoff is  $2\sup |u_{\eps}|$
\end{enumerate}
We try to minimize the payoff and the opponent tries to maximize it. 

\idea We try to pull the game tokens to the same position before the opponent succeeds in moving one of the game tokens outside $B_1$. The value of this game should evidently be
larger than or equal to $|G|$.

Usually when starting from a game in $\Rn$ one could derive several different games in $\R^{2n}$, and we need to choose a  game that is suitable for our purposes. In stochastics or optimal mass transport language, we choose couplings of probability measures on $\Rn$ in such a way that we get a probability measure on $\R^{2n}$ having the original measures as marginals.

A more analytic way of looking at details of the method is that we find a suitable supersolution $f$ to the multidimensional DPP that yields the desired bound for the solution $G$ after establishing a comparison principle.
We now sketch the concrete steps of the method. We take a comparison function
\begin{align*}
f(x,z)=C\abs{x-z}^{\delta}+\ldots
\end{align*} 
where we omitted the second localizing term for simplicity, and also a correction needed if $x$ and $z$ are close to each other. In any case, if we can establish a comparison between $u(x)-u(x)=G(x,z)$ and $f(x,z)$, then Hölder regularity will follow.  To establish the comparison, thriving for a contradiction assume that 
\begin{equation}\label{vastaoletus}
M:=\sup_{(x',z')\in B_1\times B_1}(u_{\eps}(x')-u_{\eps}(z')-f(x',z'))\,
\end{equation}
is too big. We choose $(x,z)\in B_1\times B_1$\footnote{We assume for simplicity, that we attain all $\inf$s and $\sup$s in this informal discussion to avoid adding small constants here and there.} such that
\begin{align*}
%\label{eq:sup-points}
u_{\eps}(x)-u_{\eps}(z)-f(x,z)= M.
\end{align*}
Using this and the DPP for the tug-of-war, we get
\begin{equation}
\label{eq:tgw-main-est}
\begin{split}
M&= u_{\eps}(x)-u_{\eps}(z)-f(x,z)\\ 
&=\frac{1}{2}\sup_{B_{\eps}(x)}u_{\eps}
+\frac{1}{2}\inf_{B_{\eps}(x)}u_{\eps}
-\bigg(\frac{1}{2}\sup_{B_{\eps}(z)}u
+\frac{1}{2}\inf_{B_{\eps}(z)}u\bigg)
-f(x,z)\\
&=
\frac{1}{2}\bigg(\sup_{B_{\eps}(x)}u_{\eps}-\inf_{B_{\eps}(z)}u_{\eps}+\inf_{B_{\eps}(x)}u_{\eps}-\sup_{B_{\eps}(z)}u_{\eps}\,\bigg)
-f(x,z)\\
&=\,I_1-f(x,z).
\end{split}
\end{equation}

Next we estimate $I_1$.
Suppose that $x_0\in B_{\eps}(x),z_0\in B_{\eps}(z)$ are such that $\sup_{B_{\eps}(x)}u_{\eps}= u_{\eps}(x_0)$ and $\inf_{B_{\eps}(z)} u_{\eps}=u_{\eps}(z_0)$. Then
\begin{align*}
%\label{eq:sup-inf}
\sup_{B_{\eps}(x)}u_{\eps}-\inf_{B_{\eps}(z)}u_{\eps}=&u_{\eps}(x_0)-u_{\eps}(z_0)\nonumber\\
=&u_{\eps}(x_0)-u_{\eps}(z_0)-f(x_0,z_0)+f(x_0,z_0)\nonumber\\
\leq &M+f(x_0,z_0)\nonumber\\
\leq& M+\sup_{B_{\eps}(x)\times B_{\eps}(z)}f\,.
\end{align*}
Reselect next $x_0\in B_{\eps}(x)$, $z_0\in B_{\eps}(z)$ such that $f(x_0,z_0)=\inf_{B_{\eps}(x)\times B_{\eps}(z)}f$. It follows that
\begin{align*}
\inf_{B_{\eps}(x)}u_{\eps}-\sup_{B_{\eps}(z)}u_{\eps}&\leq  u_{\eps}(x_0)-u_{\eps}(z_0)\\
&=\, u_{\eps}(x_0)-u_{\eps}(z_0)-f(x_0,z_0)+f(x_0,z_0)\\
&\leq  \,M+f(x_0,z_0)\,\\
&=  M+\inf_{B_{\eps}(x)\times B_{\eps}(z)}f.
\end{align*}
Combining the estimates, we get
\[
\begin{split}
I_1 \leq M+\half \bigg(\sup_{B_{\eps}(x)\times B_{\eps}(z)}f \,+\,\inf_{B_{\eps}(x)\times B_{\eps}(z)}f\,\bigg).
\end{split}
\]
Recalling (\ref{eq:tgw-main-est}), we end up with 
\begin{align*}
M\le M+\half \bigg(\sup_{B_{\eps}(x)\times B_{\eps}(z)}f \,+\,\inf_{B_{\eps}(x)\times B_{\eps}(z)}f\,\bigg)-f(x,z).
\end{align*}
Thus for the desired contradiction it suffices to show that 
\begin{align*}
f(x,z)&>\frac{1}{2}\bigg(\sup_{B_{\eps}(x)\times B_{\eps}(z)}f \,+\,\inf_{B_{\eps}(x)\times B_{\eps}(z)}f\,\bigg).
\end{align*}
To establish the last inequality we use the precise form of $f$ and suitable choices (that can be guessed based on the game strategies) to estimate the right hand side.

\noindent {\bf Random walk:} To give another example, consider the random walk. We choose a mirror point coupling when points are far away from each other. Thus 
\begin{align*}
G(x,z)&=u_{\eps}(x)-u_{\eps}(z)=\kint_{B_{\eps}(0)}u_{\eps}(x+h) \ud h-\kint_{B_{\eps}(0)}u_{\eps}(z+h) \ud h\\
 &=\kint_{B_{\eps}(0)}u_{\eps}(x+h)- u_{\eps}(z+P_{x,z}(h)) \ud h\\
 &=\kint_{B_{\eps}(0)}G(x+h, z+P_{x,z}(h)) \ud h
\end{align*}
where $P_{x,z}(h)$ is a mirror point of $h$ with respect to $V^\perp=\operatorname{span}(x-z)^\perp$, as in Figure~\ref{fig:mirror}.
\begin{figure}[h]
\includegraphics[scale=0.9]{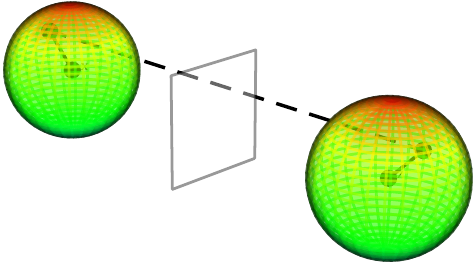}
\caption{Mirror point coupling.}
\label{fig:mirror}
\end{figure}

\idea As indicated at the beginning of the section,  we try to get to the target $x=z$ to get a favorable bound for $G$. 
In stochastic terms,  the mirror point coupling is a good coupling when aiming at $x=z$ in the course of the process. In contrast, choosing for example the same vector $h$ instead of the mirror point $P_{x,z}(h)$ would be a bad choice with this aim.

Again, with a more analytic approach, to prove a comparison between $G$ and $f$, we make a similar counter assumption as above and estimate
\begin{align*}
M&= u_{\eps}(x)-u_{\eps}(z)-f(x,z)\\ 
&=\bigg(\intav_{B_{\eps}(x)}u_{\eps}(y)\,dy\,-\intav_{B_{\eps}(z)}u_{\eps}(y)\,dy\,\bigg)
-f(x,z)\\
&=\kint_{B_{\eps}(0)}\Big(u_{\eps}(x+h)-u_{\eps}(z+P_{x,z}(h))-f(x+h,z+P_{x,z}(h))\Big)\,dh\\
&\,\,\,\,\,\,+\kint_{B_{\eps}(0)}f(x+h,z+P_{x,z}(h))\,dh\,-f(x,z)\\
&\leq M+\kint_{B_{\eps}(0)}f(x+h,z+P_{x,z}(h))\,dh-f(x,z).
\end{align*}
Thus showing 
\begin{align*}
\kint_{B_{\eps}(0)}f(x+h,z+P_{x,z}(h))\,dh\,<f(x,z)
\end{align*}
provides the contradiction in this case. 
It turns out that the mirror point coupling is a favorable choice when establishing the above inequality using the explicit form of $f$. 

A parabolic version of the above approach is in \cite{parviainenr16}.

The argument sketched in this section has connections to the method of couplings of stochastic processes dating back to the 1986 paper of Lindvall and Rogers \cite{lindvallr86}. 
In a seemingly independent thread in the theory of viscosity solutions, the coupling method is related to the doubling of variables  and the theorem on sums as pointed out in \cite{porrettap13}; and also to the Ishii-Lions PDE regularity method introduced in \cite{ishiil90}. 

Even if the Ishii-Lions method is not required for the above approach, a simple presentation of the method might be of independent interest, and thus we give it here.
For simplicity we look at the Laplacian, but the method generalizes to a class of operators that are uniformly elliptic. For the definition of viscosity solutions, recall Definition \ref{eq:viscosity-sol} with $p=2$. 

\begin{theorem}[Ishii-Lions]
Let $u\in C(\ol B_4(0))$ be a continuous viscosity solution to $\Delta u=0$.
Then for $\a\in (0,1)$ there is $C>0$ such that
\[
\begin{split}
\abs{u(x)-u(y)}\le C\abs{x-y}^{\a} \text{ for all } x,y\in B_{1/4}(0).
\end{split}
\]

\end{theorem}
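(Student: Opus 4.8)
The plan is to run the Ishii--Lions doubling-of-variables argument, exploiting that the Laplacian is uniformly elliptic: the concave penalization $\abs{x-y}^{\a}$ produces a strongly negative second-order term in the radial direction $\ol x-\ol y$ that the trace constraint coming from $\Delta u=0$ cannot absorb. Uniform ellipticity enters only through the fact that $u$ is simultaneously a sub- and a supersolution, so that the Hessian slots of the appropriate jets have nonnegative and nonpositive trace respectively.

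First I would introduce the auxiliary function $\Phi(x,y)=u(x)-u(y)-\varphi(x,y)$ on $\ol B_4(0)\times\ol B_4(0)$, where
$$\varphi(x,y):=L\abs{x-y}^{\a}+\frac{K}{2}\big(\abs{x-z_0}^2+\abs{y-z_0}^2\big),$$
$z_0\in B_{1/4}(0)$ is a center, $K$ is fixed (depending only on $\norm{u}_{L^\infty}$ and the radii) large enough that the quadratic localizing term forces any positive maximum of $\Phi$ to be attained at an \emph{interior} pair $(\ol x,\ol y)$, and $L$ is a free large parameter. Arguing by contradiction, I would suppose the asserted estimate fails on $B_{1/4}(0)$; then for $L$ large $\sup\Phi>0$, and since $\Phi\le 0$ on the diagonal $\{x=y\}$ the maximizing pair satisfies $\ol x\neq \ol y$, so $\varphi$ is smooth there and $\abs{x-y}^{\a}$ may legitimately be used as a test penalization.

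Next I would apply the theorem on sums (Ishii's lemma, exactly as in the uniqueness proof above): for a parameter $\tau>0$ it furnishes symmetric matrices $X,Y$ with $(D_x\varphi(\ol x,\ol y),X)\in\ol J^{2,+}u(\ol x)$, $(-D_y\varphi(\ol x,\ol y),Y)\in\ol J^{2,-}u(\ol y)$, and the block matrix with diagonal blocks $X$ and $-Y$ bounded above by $A+\tau A^2$, where $A=D^2\varphi(\ol x,\ol y)$. The subsolution property gives $\tr X\ge 0$ and the supersolution property gives $\tr Y\le 0$, hence $\tr(X-Y)\ge 0$. Writing $r=\abs{\ol x-\ol y}$ and $\hat e=(\ol x-\ol y)/r$, the Hessian of $\abs{x-y}^{\a}$ has radial eigenvalue $\a(\a-1)r^{\a-2}<0$ and tangential eigenvalues $\a r^{\a-2}>0$. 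Testing the matrix inequality with the radial vector $(\hat e,-\hat e)$ and with tangential pairs $(\xi,\xi)$, $\xi\perp\hat e$, I would obtain $\langle(X-Y)\hat e,\hat e\rangle\le 4L\a(\a-1)r^{\a-2}+2K+(\text{corr})$ and $\langle(X-Y)\xi,\xi\rangle\le 2K\abs{\xi}^2+(\text{corr})$, so that summing over an orthonormal basis yields $\tr(X-Y)\le 4L\a(\a-1)r^{\a-2}+2Kn+(\text{corr})$.

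Since $\a<1$ the coefficient $\a(\a-1)$ is negative and, as $r\le \diam$, the factor $r^{\a-2}$ is bounded below by a positive constant; choosing $\tau$ of order $1/\big(L\abs{\a(\a-1)}r^{\a-2}\big)$ renders the $\tau A^2$ correction subordinate to the good radial term, whence for $L$ large relative to $K$ the right-hand side is strictly negative, contradicting $\tr(X-Y)\ge 0$. This proves $u(x)-u(y)\le L\abs{x-y}^{\a}$ on $B_{1/4}(0)$, and exchanging the roles of $x$ and $y$ gives the two-sided bound $\abs{u(x)-u(y)}\le C\abs{x-y}^{\a}$. I expect the main obstacle to be the bookkeeping inside the theorem on sums: one must fix $K$, then $\tau$, then $L$ in the correct order so that the strongly negative radial contribution of the penalization dominates simultaneously the positive tangential and localizing contributions and the quadratic correction $\tau A^2$, while still guaranteeing that the maximum of $\Phi$ is interior so that the viscosity inequalities may be invoked at $(\ol x,\ol y)$.
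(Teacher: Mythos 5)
Your proposal is correct and follows essentially the same route as the paper's proof: doubling of variables with the penalization $C\abs{x-y}^{\a}$ plus a quadratic localizer, the theorem on sums, and the observation that the strongly negative radial eigenvalue $\a(\a-1)r^{\a-2}$ of $D^2\abs{x-y}^{\a}$ forces $\tr(X-Y)<0$ once the $\tau A^2$ correction is tamed, contradicting the sub/supersolution trace inequalities. The only differences are cosmetic (you localize symmetrically in both variables and keep the localizer inside the test function, whereas the paper uses $2\abs{x-z}^2$ in $x$ only and absorbs it into the jet as $X+4I$), and your ordering of the constants $K$, $\tau$, $L$ is the same bookkeeping the paper carries out with $\mu$ and $C$.
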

%%%%%%%%%%%%%%%%%%%%%%%%
%%%%%%%%%%%%%%%%%%%%%%%%
\begin{proof}
By considering $v:=(u-\inf_{B_1} u)/\lambda$, $\lambda :=\sup u_{B_1}-\inf_{B_1} u$, we may assume that
\[
\begin{split}
0\le u \le 1.
\end{split}
\]
Choose $z\in B_{1/4}$ and set 
 $$
 f(x,y):=\vp(x,y)+2\abs{x-z}^2:=C\abs{x-y}^\a+2\abs{x-z}^2
 .$$
  We want to show that $u(x)-u(y)-f(x,y)\le 0$, and thriving for a contradiction  suppose that there is $\theta>0$ and $x,y\in B_1$ such that
 \[
\begin{split}
u(x)-u(y)-f(x,y)=\sup_{(x',y')\in \ol B_1\times \ol B_1}(u(x')-u(y')-f(x',y'))&= 
 \theta>0.
\end{split} 
\]
We immediately observe that $x\neq y$ by the counter assumption.
Also $(x,y)\notin \partial (B_1\times B_1)$ since if $(x,y)\in \partial (B_1\times B_1)$
\[
\begin{split}
C\abs{x-y}^\a+2\abs{x-z}^2\ge 1,
\end{split}
\]
for $C$ large enough.

  Thus we may use the theorem on sums (see for example \cite[Lemma 3.6]{koike04}, or \cite[Theorem 3.3.2]{giga06}) to obtain symmetric matrices $X$ and $Y$ such that
\[
\begin{split}
(D_x\vp(x,y),X)\in \ol J^{2,+}(u(x)-2\abs{x-z}^2),\quad (-D_y\vp(x,y),Y)\in \ol J^{2,-}u(y)
\end{split}
\]
i.e.\ by \cite[Proposition 2.7]{koike04}
\[
\begin{split}
(D_x\vp(x,y)+4(x-z),X+4I)\in \ol J^{2,+}u(x),\ (-D_y\vp(x,y),Y)\in \ol J^{2,-}u(y),
\end{split}
\]
with the estimate
\begin{equation}
\label{eq:thm-sum-est}
\begin{split}
\begin{pmatrix}
X&0\\
0&-Y 
\end{pmatrix}\le D^2 \vp(x,y)+\frac{1}{\mu} (D^2 \vp(x,y))^2. 
\end{split}
\end{equation}
Now
\[
\begin{split}
D\vp(x,y)&=C\a\abs{x-y}^{\a-2}(x-y,-(x-y)),\\
 D^2\vp(x,y)&=\begin{pmatrix}
M&-M\\
-M&M 
\end{pmatrix},
\end{split}
\]
with 
\begin{align*}
M=C\a\abs{x-y}^{\a-2}\Big((\a-2)\frac{x-y}{\abs{x-y}}\otimes \frac{x-y}{\abs{x-y}}+I\Big),
\end{align*}
where
$x\otimes y$ denotes a matrix with entries $x_iy_j$, and $I$ is an identity matrix.
Further,
\[
\begin{split}
(D^2\vp(x,y))^2=2\begin{pmatrix}
M^2&-M^2\\
-M^2&M^2 
\end{pmatrix}.
\end{split}
\]
where 
\[
\begin{split}
&M^2=C^2\a^2\abs{x-y}^{2(\a-2)} \\
\\&\hspace{4 em}\cdot\Big((\a-2)^2\frac{x-y}{\abs{x-y}}\otimes \frac{x-y}{\abs{x-y}}+2(\a-2)\frac{x-y}{\abs{x-y}}\otimes \frac{x-y}{\abs{x-y}}+I\Big)\\
&=C^2\a^2\abs{x-y}^{2(\a-2)} \Big(\underbrace{(\a^2-4\a+4+2\a-4)}_{=\a(\a-2)}\frac{x-y}{\abs{x-y}}\otimes \frac{x-y}{\abs{x-y}}+I\Big).
\end{split}
\]
%%%%%%%%%%%%%%%%%
%%%%%%%%%
First, all the eigenvalues of $X-Y$ are nonpositive since the right hand side of (\ref{eq:thm-sum-est}) annihilates vectors of the form 
$
\begin{pmatrix}
\xi\\
\xi
\end{pmatrix}
$.
Moreover  by using
\begin{align*} 
\begin{pmatrix}
\xi\\
0 
\end{pmatrix}\quad \text{ and }\quad \begin{pmatrix}
0\\
\xi 
\end{pmatrix}
\end{align*}
in (\ref{eq:thm-sum-est}) respectively, we get 
\begin{equation}
\label{eq:sep-ests}
\begin{split}
\langle X\xi, \xi\rangle &\le \big\langle (M+\frac2{\mu}M^2)\xi ,  \xi\big\rangle,\\
-\langle Y\xi, \xi\rangle &\le \big\langle (M+\frac2{\mu}M^2)\xi ,  \xi\big\rangle.
\end{split}
\end{equation}
Next we observe that
\[
\begin{split}
2\left\langle (M+\frac{2}{\mu} M^2 )\frac{x-y}{\abs{x-y}},\frac{x-y}{\abs{x-y}} \right\rangle&
=2C\a\abs{x-y}^{\a-2}(\a-2+1)\\
&\hspace{1 em}+\frac{4}{\mu} C^2\a^2\abs{x-y}^{2(\a-2)}(\a(\a-2)+1)\\
&\le  C\a\abs{x-y}^{\a-2}(\a-1)
\end{split}
\]
by choosing $\mu$ such that 
\[
\begin{split}
\frac{4}{\mu} C^2\a^2\abs{x-y}^{2(\a-2)}(\a(\a-2)+1)<- C\a\abs{x-y}^{\a-2}(\a-2+1).
\end{split}
\] 
This together with (\ref{eq:sep-ests}) implies that one of the eigenvalues of $X-Y$ will have to be smaller than $C\a\abs{x-y}^{\a-2}(\a-1)$. This together with nonpositivity of eigenvalues implies
\[
\begin{split}
0\le \tr(X+4I)-\tr(Y)&\le 4n+ C\a\abs{x-y}^{\a-2}(\a-1)<0,
\end{split}
\]
for large enough $C=C(n,\a)$. This contradiction completes the proof of 
\begin{align*}
u(x)-u(y)\le C\abs{x-y}^\a+2\abs{x-z}^2.
\end{align*}
Suppose then that we want to obtain $\abs{u(\ol x)-u(\ol y)}\le C\abs{\ol x-\ol y}^\a$ for some given $\ol x,\ol y\in B_{1/4}$. We may always choose the points so that $u(\ol x)\ge u(\ol y)$, and we may select $z=\ol x$. Thus the claim follows from the previous estimate.
\end{proof}
%%%%%%%%%%%%

\subsection{Krylov-Safonov regularity approach for discrete stochastic processes}

The celebrated Krylov-Safonov \cite{krylovs79} H\"older estimate is one of the key results in the theory of nondivergence form elliptic partial differential equations with  bounded and measurable coefficients with no further regularity assumptions.  Later Trudinger \cite{trudinger80} devised an analytic proof for the Krylov-Safonov result.

In \cite{arroyobp, arroyobp2}, a H\"older regularity is established to expectations of stochastic processes with bounded and measurable increments, in other words to functions satisfying the dynamic programming principle
\begin{align}
\label{eq:dpp-intro}
u (x) =\alpha  \int_{\R^N} u_{\eps}(x+\eps z) \,d\nu_x(z)+\frac{\beta}{\abs{B_\eps}}\int_{B_\eps(x)} u_{\eps}(y)\,dy
+\eps^2 f(x).
\end{align}	
Here $f$ is a Borel measurable bounded function, $\nu_x$ is a symmetric probability measure for each $x$ with support in $B_\Lambda,\ \Lambda\geq 1$ and certain measurability conditions, $\eps>0$, and $\alpha+\beta=1,\ \alpha\ge 0,\ \beta>0$. Moreover, the result generalizes to Pucci type extremal operators and conditions of the form 
\begin{align*}
\mathcal L_\varepsilon^+ u\ge -|f|,\quad \mathcal L_\varepsilon^- u\le  |f|, 
\end{align*}
where $\mathcal L_\varepsilon^+, \mathcal L_\varepsilon^-$ are Pucci type extremal operators related to (\ref{eq:dpp-intro}). As a consequence, the results immediately cover for example tug-of-war with noise type stochastic games, see \cite[Remark 7.6]{arroyobp}.

\idea Original idea of Krylov and Safonov was to use a probabilistic argument for PDEs. This uses a fact that there is an underlying continuous time process and a solution $u$ can be presented as an expectation. The key result is to establish that one can reach any set of positive measure with a positive probability before exiting a bigger set.

With this key result at our disposal, the De Giorgi oscillation estimate follows in a straightforward manner.
To be more precise, De Giorgi type oscillation estimate roughly states the following in a simplified setting:  
if $u$ is a (sub)solution with $u\leq 1$ in a suitable bigger set and
\[
|B_{R}\cap \{u\leq 0\}|\geq \theta |B_R|,
\]
for some $\theta>0$, then there exists $\eta>0$ such that
\[
\sup_{B_R} u \leq 1-\eta.
\]
Recalling the key result, we can reach the level set in the assumption above with a positive probability and use this in obtaining the improved oscillation estimate above.
The H\"older estimate then follows from the De Giorgi oscillation lemma after an iteration. 

The idea of proving the key result of reaching any set of positive measure with a positive probability is then as follows. 
If the portion of the set we want to reach has high enough density in a cube, then we can utilize the Alexandrov-Bakelman-Pucci (ABP) PDE-estimate with a characteristic function of the set as a source term in the corresponding PDE to estimate the desired probability. Then it remains to use the Calder\'on-Zygmund decomposition to find such cubes that the density condition is always satisfied, and showing that we reach these cubes with positive probability.
This proof clearly demonstrates an interplay between stochastics and PDEs.

The above sketch utilizes the fact that the PDE and the process contains information in all the scales. Indeed, the Calder\'on-Zygmund decomposition contains arbitrarily small cubes and rescaling is freely applied. In the setting of (\ref{eq:dpp-intro}), we have a discrete process, and the step size sets a natural limit for the scale. This limitation has some crucial effects as we have already seen: values can even be discontinuous, and our estimates are asymptotic. Thus the discrete step size needs to be carefully taken into account in the estimates. 

\section{Open problems and comments}

The following problems are open to the best of my knowledge.
\begin{enumerate}
\item Uniqueness to normalized equations. Uniqueness to $\Delta^N_p u=f$ $1<p< \infty$ whenever $f$ changes sign or even $f\ge 0$. For $\Delta^N_\infty u=f$ there is  a counterexample in \cite{peresssw09} for sign changing $f$. The case $\Delta^N_\infty u=f$ for $f\ge 0$ is also open. 
\item A related question to the previous one: show a uniqueness for $\Delta_{p(x)}^N u=0$ (see a preprint version of \cite{juutinenlp10})? 
\item PDE regularity method. Quite often there is a PDE method related to the stochastic method. Is there a corresponding PDE method to the stochastic regularity method introduced in Section \ref{sec:Lip-regularity}?  
\item The asymptotic mean value property. Does the asymptotic mean value property hold directly to solutions themselves when $p<\infty$? It holds in the viscosity sense as we showed above. In the plane $n=2$ it is known to hold directly to solutions when $1<p<\infty$, see \cite{arroyol16} and \cite{lindqvistm16}. For $p=\infty$ this is false as we saw in Example \ref{ex:aronsson}. Naturally $p=2$ is also known.
\item Games can easily be defined on much more general spaces (suitable metric measure spaces, graphs etc.). How much does the game theoretic approach provide new theory in this context?  The case of a Heisenberg group is discussed for example in \cite{lewickamr20}, and the case of graphs in the references mentioned below in the context of machine learning.
\end{enumerate}
In these lecture notes we concentrated on the case $2\le p<\infty$ in the context of game theory. The reason was to avoid adding extra complications. Needless to say, there are numerous generalizations: for some of those for example \cite{blancr19b} and \cite{lewicka20} can be consulted. This branch of game theory has also provided novel insight into the theory of PDEs from the beginning, see for example \cite{armstrongs10}. We finish by mentioning connections to numerical methods in PDEs \cite{oberman13}, nonlocal theory \cite{bjorklandcf12}, and applications to machine learning \cite{calder19b,caldergl22}.

%%%%%%%%%%%%%%%%%%%%%%%%%%%%%%%%%%%%%%%%%%%%%%%%%%%%%%%%%%%%%%%%
%%%%%%%%%%%%%%%%%%%%%%%%%%%%%%%%%%%%%%%%%%%%%%%%%%%%%%%%%%%%%%%%
%%%%%%%%%%%%%%%%%%%%%%%%%%%%%%%%%%%%%%%%%%%%%%%%%%%%%%%%%%%%%%%%

%\bibliography{citations2}
%\bibliographystyle{alpha}

\def\cprime{$'$} \def\cprime{$'$} \def\cprime{$'$}

\end{document}